\documentclass[reqno,10pt]{amsart}

\usepackage{amsmath,amsfonts,amssymb,amsthm,epsfig,amscd}
\usepackage[]{fontenc}
\usepackage[latin1]{inputenc}
\usepackage{enumerate}
\usepackage[cmtip,all]{xy}
\usepackage{tabularx,multirow}
\usepackage[dvipsnames]{xcolor}
\usepackage[colorlinks=true]{hyperref}
\usepackage{url}
\usepackage{mathtools}
\usepackage{tikz}
\usepackage{tikz-cd}
\usepackage{enumitem}
\usepackage{float}

\newcommand{\bigslant}[2]{{\raisebox{.2em}{$#1$}\left/\raisebox{-.2em}{$#2$}\right.}}
\usepackage{dynkin-diagrams}
\def\row#1/#2!{#1_{\IfStrEq{#2}{}{n}{#2}} & \dynkin{#1}{#2}\\}

%DIMENSIONI della pagina
\voffset=-2cm \textheight=24,5cm \hoffset=-.7cm \textwidth=18cm
\oddsidemargin=.08cm \evensidemargin=-.08cm \footskip=35pt
\linespread{1.10}
\parindent=20pt

\sloppy \allowdisplaybreaks \numberwithin{equation}{section}

\newtheorem{thm}{Theorem}[section]
\newtheorem*{thm*}{Theorem}

\newtheorem{prop}[thm]{Proposition}
\newtheorem{cor}[thm]{Corollary}
\newtheorem{lem}[thm]{Lemma}

\newtheorem*{qst*}{Problem}

\theoremstyle{definition}
\newtheorem{defn}[thm]{Definition}

\theoremstyle{definition}
\newtheorem{rmk}[thm]{Remark}
\newtheorem{exmp}[thm]{Example}

\newcommand{\C}{\mathbb{C}}
\newcommand{\R}{\mathbb{R}}
\newcommand{\Z}{\mathbb{Z}}
\newcommand{\Ol}{\mathcal{O}}

\newcommand{\Pn}{\mathbb{P}}
\newcommand{\Q}{\mathbb{Q}}

\newcommand{\Pic}{\textnormal{Pic}}

\newcommand{\Nef}{\textnormal{Nef}}

\newcommand{\NS}{\textnormal{NS}}
\newcommand{\Aut}{\textnormal{Aut}}
\newcommand{\rk}{\textnormal{rk}}
\newcommand{\Mov}{\textnormal{Mov}}
\newcommand{\Bir}{\textnormal{Bir}}
\newcommand{\Deg}{\textnormal{deg}}

\newcommand{\Div}{\textnormal{Div}}
\newcommand{\Dim}{\textnormal{dim}}
\newcommand{\Sym}{\textnormal{Sym}}

\newlist{thmlist}{enumerate}{1}
\setlist[thmlist]{label=(\roman{thmlisti}), ref=\thethm.(\roman{thmlisti}),noitemsep}

\begin{document}
	\title[Geometric description of $\langle 2 \rangle$-polarised Hilbert squares of generic K3 surfaces]{Geometric description of $\langle 2 \rangle$-polarised Hilbert squares of generic K3 surfaces}
	
	\author{Simone Novario}
	\address{Universit\`a degli Studi di Milano, Dipartimento di Matematica ``F. Enriques'', Via Cesare Saldini 50, 20133 Milano, Italy. \newline \indent Universit\'e de Poitiers, Laboratoire de Math\'ematiques et Applications, T\'el\'eport 2, Boulevard Marie et Pierre Curie, 86962 Futuroscope Chasseneuil, France.}
	
	\email{simone.novario@unimi.it, simone.novario@math.univ-poitiers.fr}
	
	\begin{abstract}
		A generic K3 surface of degree $2t$ is a general complex projective K3 surface $S_{2t}$ whose Picard group is generated by the class of an ample divisor $H \in \text{Div}(S_{2t})$ such that $H^2=2t$ with respect to the intersection form. We show that if $X$ is the Hilbert square of a generic K3 surface of degree\,\,$2t$ with $t \neq 2$ which admits an ample divisor $D \in \text{Div}(X)$ with $q_X(D)=2$, where $q_X$ is the Beauville--Bogomolov--Fujiki form, then $X$ is a double EPW sextic.
	\end{abstract}

\maketitle

\section{Introduction}
A classical result in the theory of complex projective K3 surfaces says that if a K3 surface admits an ample divisor\,\,$D$ with $D^2=2$ with respect to the intersection form, then the complete linear system $|D|$ is basepoint free and the morphism that it induces is a double cover of the plane $\Pn^2$ ramified on a sextic curve. This was proved by Saint--Donat in \cite{saint1974projective}.

It is natural to study a similar problem for a projective IHS (Irreducible Holomorphic Symplectic) manifold of dimension $2n$ with $n \ge 2$, a sort of higher dimensional generalization of K3 surfaces. The importance of these varieties is linked to the Beauville--Bogomolov decomposition theorem, see \cite{beauville1983varietes}: up to a finite étale cover, any compact Kähler manifold with trivial first Chern class is the product of a complex torus, irreducible Calabi--Yau manifolds and IHS manifolds.

Let $S$ be a K3 surface. We denote by $T(S)$ its transcendental lattice, which coincides with $\NS(S)^{\perp}$, where $\NS(S)$ is the Néron--Severi group of $S$ and the orthogonal is taken with respect to the intersection form. Then $T(S)_{\Q}:=T(S) \otimes \Q$ is a rational Hodge structure of weight $2$ and we denote by
\begin{equation*}
E_S:=\text{Hom}_0(T(S)_{\Q}, T(S)_{\Q})
\end{equation*}
the algebra of endomorphisms of weight $0$ on $T(S)_{\Q}$. See \cite[$\S 3$]{huybrechts2016lectures} for details on Hodge structures. By a result of Zahrin, see \cite{Zarhin1983}, $E_S$ is either $\Q$, or a totally real field or a CM field. We say that $S$ is a K3 surface \emph{general in its rank}, or simply \emph{general}, if $E_S \cong \Q$. If $E_S$ is a totally real field then $S$ has \emph{real multiplication} (RM), while if\,\,$E_S$ is a CM field then $S$ has \emph{complex multiplication} (CM). Then K3 surfaces with RM or CM describe a locus of positive codimension in the analytic moduli space of K3 surfaces of fixed Picard rank, except for K3 surfaces of Picard rank\,\,$20$, which have all CM. See \cite{van2008real} and \cite{elsenhans2016point} for details. 

A \emph{generic K3 surface} is a general projective K3 surface whose Picard group has rank\,\,$1$. We say that a generic K3 surface $S_{2t}$ has \emph{degree} $2t$ if $\Pic(S_{2t}) = \Z H$ with $H^2=2t$. In this paper we focus on Hilbert schemes of $2$ points on a generic K3 surface, also known as \emph{Hilbert squares} of generic K3 surfaces: the Hilbert square of a K3 surface $S$ will be denoted by $S^{[2]}$. The main problem of this paper is to determine the geometric description of a $\langle 2 \rangle$-polarised Hilbert square $X$ of a generic K3 surface, i.e., $X$ admits an ample divisor $D \in \Div(X)$ with $q_X(D)=2$, where $q_X$ is the \emph{Beauville--Bogomolov--Fujiki} quadratic form on $H^2(X, \Z)$. This can be seen as a generalization of the problem studied by Saint--Donat presented in the first lines. We denote by $|D|$ the complete linear system associated to $D \in \Div(X)$, i.e., the set of effective divisors linearly equivalent to $D$, and by $\text{Bs}|D|$ the base locus of $|D|$, which is the set of points on\,\,$X$ where all the global sections in $H^0(X, \Ol_X(D))$ vanish. Our aim is to study the rational map $\varphi_{|D|}$ induced by the complete linear system $|D|$, i.e.,
\begin{equation*}
	\varphi_{|D|}: X \dashrightarrow \Pn(H^0(X,\ \Ol_X(D))^{\vee}),
\end{equation*}
where by definition $x \in X \setminus \text{Bs}|D|$ is mapped to the hyperplane in $H^0(X, \Ol_X(D))$ consisting of sections vanishing at\,\,$x$. If $\text{Bs}|D|$ is empty, we say that $|D|$ is \emph{basepoint free}: in this case $\varphi_{|D|}$ is a morphism.

Let $X$ and $D$ be as above. Then there exists an anti-symplectic involution $\iota$ which generates the group $\Aut(X)$ of biregular automorphisms on $X$ by a result obtained by Boissière, Cattaneo, Nieper-Wi{\ss}kirchen and Sarti in \cite{boissiere2016automorphism}, and $\iota$ is such that $\iota^*[D]=[D]$ in the Néron--Severi group $\NS(X)$. Here anti-symplectic means that $\iota^*\sigma_X=-\sigma_X$, where $\sigma_X \in H^0(X, \Omega^2_X)$ is a symplectic form of $X$. The main theorem of this paper, which we now present, gives a geometrical description of the map induced by the complete linear system $|D|$, cf.\,Theorem \ref{thm class F and comm diagram} and Theorem \ref{thm main}.
\begin{thm*}
	Let $X$ be the Hilbert square of a generic K3 surface $S_{2t}$ of degree $2t$ such that $X$ admits an ample divisor $D \in \Div(X)$ with $q_X(D)=2$. Suppose that $t \neq 2$, and denote by $\iota$ the anti-symplectic involution which generates the group $\Aut(X)$. Then the complete linear system $|D|$ is basepoint free, the morphism
	\begin{equation*}
		\varphi_{|D|}: X \rightarrow Y \subset \Pn^5
	\end{equation*}
	is a double cover whose ramification locus is the surface $F$ of points fixed by $\iota$, and $Y \cong X/\langle \iota \rangle$ is an EPW sextic, in particular $X$ is a double EPW sextic. Moreover, if $H^{2,2}(X, \Z):=H^4(X, \Z) \cap H^{2,2}(X)$, then
	\begin{equation*}
	[F]=5D^2-\frac{1}{3}c_2(X) \in H^{2,2}(X, \Z),
	\end{equation*}
	where $c_2(X) \in H^4(X, \Z)$ is the second Chern class of $X$.
\end{thm*}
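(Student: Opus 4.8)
The plan is to first determine $D$ by lattice theory, then identify the pair $(X,D)$ with a double EPW sextic via O'Grady's description and the global Torelli theorem, and only afterwards compute the class $[F]$ inside $H^{4}(X,\Q)$. For the lattice part I would start from $H^{2}(S^{[2]},\Z)\cong H^{2}(S,\Z)\oplus\Z\delta$, with $q_X$ restricting to the intersection form on $H^{2}(S,\Z)$ and $q_X(\delta)=-2$; for $S=S_{2t}$ generic this gives $\NS(X)=\Z H\oplus\Z\delta$ with Gram matrix $\mathrm{diag}(2t,-2)$. Writing $D=aH-b\delta$, the hypothesis $q_X(D)=2$ reads $ta^{2}-b^{2}=1$, so $\gcd(a,b)=1$; and the divisibility is forced to be $1$, because the discriminant group of $H^{2}(X,\Z)$ is $\Z/2\Z$ with discriminant quadratic form taking the value $-\tfrac12\bmod 2\Z$ on its generator, so a class of divisibility $2$ would have self-intersection $\equiv-2\pmod 8$. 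Using the description of the nef cone of $S^{[2]}$ in Picard rank one (Bayer--Hassett--Tschinkel, Bayer--Macr\`{\i}), ampleness of $D$ confines $b/a$ to an explicit interval, and $t=2$ is exactly the value for which the resulting pair $(S_{4}^{[2]},D)$ degenerates.

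By the above, $(X,D)$ is a polarised IHS fourfold of $K3^{[2]}$-type with $q_X(D)=2$, $\mathrm{div}(D)=1$ and $D$ ample, so its period lies in the $20$-dimensional moduli space $\mathcal M$ of such pairs. Using the work of O'Grady and Debarre--Macr\`{\i} on double EPW sextics together with the global Torelli theorem (Verbitsky, Markman), $\mathcal M$ is identified via periods with the moduli space of double EPW sextics, and there is an explicit proper closed locus $\mathcal Z\subsetneq\mathcal M$ outside of which $(X,D)$ is a double EPW sextic $X_{A}$ with $\varphi_{|D|}$ the canonical $2{:}1$ cover $X_{A}\to Y_{A}\subset\Pn^{5}$ onto the sextic fourfold $Y_{A}$. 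It then remains to show that the period of $(S_{2t}^{[2]},D)$ avoids $\mathcal Z$: this is a Noether--Lefschetz-type verification, the rank-two lattice $\Z H\oplus\Z\delta$ being too rigid to be realised by a Lagrangian lying in the bad strata. I expect this to be the main obstacle --- making $\mathcal Z$ completely explicit (Lagrangians containing a decomposable vector, plus the remaining degeneration divisors) and ruling out membership for every $t\neq 2$, equivalently pinning down exactly why $S_{4}^{[2]}$ with its square-$2$ class is exceptional (the class there being only nef, or $\varphi_{|D|}$ failing to be an EPW double cover). A more hands-on alternative would analyse $\varphi_{|D|}$ directly: one has $h^{0}(X,\Ol_X(D))=6$ from Riemann--Roch, $\chi(\Ol_X(D))=\binom{q_X(D)/2+3}{2}=6$, and Kodaira vanishing $H^{i}(X,\Ol_X(D))=0$ for $i>0$; then $\varphi_{|D|}$ is a morphism to $\Pn^{5}$ whose degree divides $D^{4}=3\,q_X(D)^{2}=12$, it factors through $X/\langle\iota\rangle$ since $\iota^{*}D=D$, and one would exclude every possibility except the $2{:}1$ cover of a non-degenerate sextic.

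Granting the identification $X\cong X_{A}$, the remaining assertions follow at once: $\varphi_{|D|}$ is the canonical cover $X_{A}\to Y_{A}$, so $|D|$ is basepoint free and $Y:=\varphi_{|D|}(X)=Y_{A}$; the Fujiki relation gives $\deg Y=\tfrac12 D^{4}=\tfrac12\cdot 3\,q_X(D)^{2}=6$, so $Y$ is a sextic and $Y\cong X/\langle\iota\rangle$; the covering involution generates $\Aut(X)$ by \cite{boissiere2016automorphism}, hence equals $\iota$; and $F=\mathrm{Fix}(\iota)$ is the ramification locus of $\varphi_{|D|}$, mapped isomorphically onto $\mathrm{Sing}(Y_{A})=Y_{A}[2]$, which is a surface of degree $40$ in $\Pn^{5}$.

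For the last assertion, $[F]\in H^{4}(X,\Z)\cap H^{2,2}(X)$ because $F$ is an algebraic surface, and over $\Q$ I would use $H^{4}(X,\Q)=\Sym^{2}H^{2}(X,\Q)$ (Verbitsky). Since the classes $\alpha^{2}$ span $H^{4}(X,\Q)$ and the cup-product pairing on $H^{4}(X,\Q)$ is non-degenerate, it suffices to check $\int_{X}[F]\,\alpha^{2}=\int_{X}\bigl(5D^{2}-\tfrac13 c_{2}(X)\bigr)\alpha^{2}$ for every $\alpha\in H^{2}(X,\Q)$. The right-hand side equals $10\,q_X(D,\alpha)^{2}$ by the Fujiki relations $\int_{X}\beta^{2}\alpha^{2}=q_X(\beta)q_X(\alpha)+2\,q_X(\beta,\alpha)^{2}$ and $\int_{X}c_{2}(X)\,\alpha^{2}=30\,q_X(\alpha)$ (the latter obtained by comparing Hirzebruch--Riemann--Roch for $\Ol_X(L)$ with $\chi(\Ol_X(L))=\binom{q_X(L)/2+3}{2}$) together with $q_X(D)=2$. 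For the left-hand side, $\int_{X}[F]\,\alpha^{2}=\int_{F}(\alpha|_{F})^{2}$; as $\iota$ acts on $H^{2}(X,\Q)$ as $+1$ on $\Q D$ and as $-1$ on $D^{\perp}$ (being the covering involution of the double EPW sextic), every class in $D^{\perp}$ restricts to $0$ on $F=\mathrm{Fix}(\iota)$, so $\alpha|_{F}=\tfrac12 q_X(D,\alpha)\,D|_{F}$ and hence $\int_{F}(\alpha|_{F})^{2}=\tfrac14 q_X(D,\alpha)^{2}\int_{F}(D|_{F})^{2}=\tfrac14 q_X(D,\alpha)^{2}\cdot 40=10\,q_X(D,\alpha)^{2}$, using $\int_{F}(D|_{F})^{2}=\deg Y_{A}[2]=40$. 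The two numbers coincide, which gives the stated formula.
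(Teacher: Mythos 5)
The heart of this theorem is precisely the step you defer: upgrading O'Grady's statement that a $\langle 2\rangle$-polarised numerical $K3^{[2]}$ is \emph{deformation equivalent} to a double EPW sextic to the statement that the specific variety $X=S^{[2]}_{2t}$ \emph{is} one. Your first route (periods, global Torelli, the Debarre--Macr\`{\i} picture of \cite{debarre2019period}) hinges on showing that the period of $(S^{[2]}_{2t},D)$ avoids the bad locus $\mathcal Z$, which you yourself call ``the main obstacle'' and do not carry out; your hands-on alternative ends with ``one would exclude every possibility except the $2{:}1$ cover of a non-degenerate sextic'', again without giving the exclusion. That exclusion is the actual content of the proof: one needs (a) that every member of $|D|$ is a prime divisor and that any two distinct members of $|D|$ meet in a reduced irreducible surface --- an arithmetic argument inside the integral lattice $H^{2,2}(X,\Z)$ resting on Pell-equation constraints, and exactly the point where $t=2$ behaves differently; (b) that $\varphi_{|D|}$ factors through $X/\langle\iota\rangle$, proved by computing the $\iota^*$-eigenspace decomposition of $H^0(X,\Ol_X(D))$; (c) an O'Grady-style degree analysis reducing to either $\Deg(f)=2$, $\Deg(Y)=6$ or $\Deg(f)=4$, $\Deg(Y)=3$, with the second case excluded by a covering-space lifting argument using $\Bir(X)\cong\langle\iota\rangle$; and (d) O'Grady's \cite[Theorem 1.1]{o2006irreducible} to recognise the degree-$6$ image as an EPW sextic. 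As written, your proposal establishes none of these, so the main assertion remains unproved. (Also, ``$\Deg(f)$ divides $D^4=12$'' is not what one gets; the correct statement is $\Deg(f)\cdot\Deg(Y)\le 12$ with equality if and only if $\mathrm{Bs}|D|=\emptyset$.)

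Your derivation of $[F]=5D^2-\tfrac13 c_2(X)$ is correct, but it is conditional on the EPW identification: you use $F\cong\mathrm{Sing}(Y_A)$ of degree $40$ to get $(D|_F)^2=40$, and the eigenvalue description of $\iota^*$ on $H^2(X,\Q)$ to kill restrictions of classes in $D^{\perp}$; the polarization argument via the Fujiki relation and $\int_X c_2(X)\,\alpha^2=30\,q_X(\alpha)$ is fine. Note that this reverses the paper's logic: there the class $[F]$ is computed \emph{before} knowing anything about the image of $\varphi_{|D|}$, by writing $[F]$ in an explicit integral basis of $H^{2,2}(X,\Z)$ and imposing Lagrangian-ness, Beauville's $c_2(F)=192$, $\iota^*[F]=[F]$, and $(D|_F)^2\in\{8,24,40\}$ coming from the eigenspace decomposition of $H^0(X,\Ol_X(D))$; the outcome $(D|_F)^2=40$ is what proves the factorization through the quotient, which in turn feeds the degree analysis. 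So if you keep your order of argument you must first supply the identification independently, and at present the value $40$ has no unconditional justification in your write-up. As a conditional verification your computation is a pleasant alternative to the paper's, but it cannot replace the missing core of the proof.
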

Here a double EPW sextic is a double cover of an EPW sextic ramified in its singular locus: see \cite{eisenbud2001lagrangian} for the definition of EPW sextic and \cite{o2006irreducible} for details on double EPW sextics. The paper \cite{o2008irreducible}, where O'Grady gives a classification, up to deformation equivalence, of \emph{numerical} $K3^{[2]}$, will play an important role. A numerical $K3^{[2]}$ is by definition an IHS manifold $M$ which admits an isomorphism of abelian groups $\psi: H^2(M, \Z) \rightarrow H^2(S^{[2]}, \Z)$ for some K3 surface $S$ such that $\int_M \alpha^4=\int_{S^{[2]}}\psi(\alpha)^4$ for every $\alpha \in H^2(M, \Z)$. In particular he showed that a numerical\,\,$K3^{[2]}$ is deformation equivalent either to a double EPW sextic or to an IHS manifold $Z$ admitting a rational map $f: Z \dashrightarrow \Pn^5$ which is birational onto its image $Y$, with $6 \le \Deg(Y) \le 12$. The link between our problem and the one studied in \cite{o2008irreducible} is given by the fact that O'Grady proved the following: a numerical $K3^{[2]}$ is deformation equivalent to an IHS manifold $Z$ of $K3^{[2]}$-type such that $\Pic(Z)$ is generated by the class of an ample divisor $H \in \Pic(Z)$ with $q_Z(H)=2$. Let $X$ be the Hilbert square of a general K3 surface of degree $2t$ and suppose that there is an ample divisor $D \in \Div(X)$ with $q_X(D)=2$. In this specific case the result obtained by O'Grady implies that $X$ is deformation equivalent to a double EPW sextic, while our main theorem is stronger, since it shows that $X=S^{[2]}_{2t}$, if $t \neq 2$, is exactly a double EPW sextic, giving a more precise geometric description of these varieties. The case $t=2$ was already studied by Welters and Beauville in \cite{welters1981abel} and \cite{beauville1983some}: the map\,\,$\varphi_{|D|}$ is a finite morphism of degree $6$ with image $\mathbb{G}(1, \Pn^3)$, the Grassmannian of lines in\,\,$\Pn^3$.

Our strategy is to follow \cite{o2008irreducible}, using the anti-symplectic involution which generates $\Aut(X)$ given by \cite{boissiere2016automorphism} in order to get as much information as possible on the geometry of the complete linear system.

The paper is organised as follows. In Section\,\,\ref{Section Pell} we give basics on Pell equations and Pell-type equations. In Section\,\,\ref{section generalities IHS manifolds} we recall the definition of IHS manifold, together with main properties of this family of varieties. In Section\,\,\ref{section generalities IHS K32 type} we present some useful results on IHS manifolds of $K3^{[2]}$-type, in particular we recall the description of the lattice $H^{2,2}(S^{[2]}, \Z):=H^4(S^{[2]}, \Z) \cap H^{2,2}(S^{[2]})$ of integral Hodge classes of type $(2, 2)$ on $S^{[2]}$ obtained in \cite{novario2021hodge} for a general projective K3 surface\,\,$S$: this will be extremely useful in several steps of this paper. In Section \ref{section generalities HS general K3 surfaces} we recall the most important results on the Hilbert square $X$ of a generic K3 surface, in particular the description of the group of biregular automorphisms obtained in \cite{boissiere2016automorphism}: $\Aut(X)$ is not trivial and generated by a non-natural anti-symplectic involution $\iota$ when\,\,$X$ is the Hilbert square of a generic K3 surface of degree $2t$ admitting an ample divisor $D \in \Div(X)$ with $q_X(D)=2$. We also briefly describe the case $t=2$, already studied by Welters and Beauville in \cite{welters1981abel} and \cite{beauville1983some}. From now on $X$ is the Hilbert square of a generic K3 surface admitting an ample divisor\,\,$D$ with $q_X(D)=2$. In Section\,\,\ref{Section fixed locus} we study the surface $F$ of points on $X$ fixed by the anti-symplectic involution $\iota$: we compute the class of\,\,$F$ in $H^{2,2}(X, \Z)$ and we show that the rational map\,\,$\varphi_{|D|}$ factors through the quotient with respect to the involution\,\,$\iota$. In Section\,\,\ref{Section irreducibility property} we prove the \emph{irreducibility property}: if $D_1, D_2 \in |D|$ are distinct and $t \neq 2$, where $2t$ is the degree of the underlying generic K3 surface, then $D_1 \cap D_2$ is a reduced and irreducible surface. This is a fundamental step in the geometrical analysis of the rational map $\varphi_{|D|}$. When $t=2$, the surface $D_1 \cap D_2$ can be reducible: we give a geometrical interpretation of the two irreducible components. We conclude with Section\,\,\ref{Section geometric description}, where we show the main result of this paper: keeping notation as above, if $t \neq 2$ then\,\,$X$ is a double EPW sextic and $\varphi_{|D|}$ is the double cover associated.

The topic of this paper is related to \cite[$\S 7$]{debarre2019period}. The approach and the techniques that we use are different: main tools are results on integral Hodge classes of type $(2, 2)$ on the Hilbert square of a generic projective K3 surface obtained in \cite{novario2021hodge}, which are exploited in particular in Theorem \ref{thm class F and comm diagram} and in Theorem \ref{thm irreducibility property}, and the papers \cite{o2008irreducible} and \cite{boissiere2016automorphism}. Moreover, the irreducibility property of Theorem \ref{thm irreducibility property} explains geometrically why the case $t \neq 2$ is special, compare with \cite[Remark 7.7]{debarre2019period}.

This paper is based on Chapter 4 and Chapter 5 of the author's PhD thesis, see \cite{novario2021ths}.

\textbf{Acknowledgements}: I thank my advisors Samuel Boissière and Bert van Geemen for the constant support I received. I would like to thank Michela Artebani and Alice Garbagnati for useful discussions on Section \ref{Section fixed locus}. I also thank Pietro Beri and Ángel David Ríos Ortiz for many useful discussions.
\section{Pell equations and Pell-type equations} \label{Section Pell}
We give a brief overview of Pell equations and Pell-type equations.
\begin{defn}
A \emph{Pell-type equation} is a diophantine equation of the form
	\begin{equation*}
		x^2-dy^2=n,
	\end{equation*}
	where $d \in \Z_{>0}$ is a positive integer, $n \in \Z\setminus\{0\}$ is a non-zero integer and $x,y$ are variables. We denote an equation of this form by $P_d(n)$. We call $P_d(1)$ a \emph{Pell equation} and $P_d(-1)$ a \emph{negative Pell equation}.
\end{defn}
We are interested in integral solutions of Pell equations and Pell-type equations. If $d=c^2$ is a square, the only solutions of the Pell equation $P_d(1)$ are $(x, y)=(\pm 1, 0)$, and the Pell-type equation $P_d(n)$ can be written as $(x+cy)(x-cy)=n$, so it can be easily solved. From now on, we will assume that $d$ is not a square. In this case the Pell-type equation $P_d(n)$ can be written in $\Z[\sqrt{d}\,]:=\bigslant{\Z[x]}{(x^2-d)}$ as
\begin{equation*}
	(x+y\sqrt{d})(x-y\sqrt{d})=n.
\end{equation*}
Let $z=x+y\sqrt{d} \in \Z[\sqrt{d}]$: we define its \emph{conjugate} as $\bar{z}:=x-y\sqrt{d} \in \Z[\sqrt{d}]$, and its \emph{norm} as $N(z):=z\overline{z}=x^2-dy^2 \in \Z$. With this notation, $P_d(1)$ and $P_d(n)$ can be written respectively as $N(z)=1$ and $N(z)=n$, where $z=x+y\sqrt{d}$.
\begin{defn} \label{defn equiv sol Pell}
	Given a Pell equation $P_d(1)$ or a Pell-type equation $P_d(n)$, two solutions $(X, Y)$ and $(X^{\prime}, Y^{\prime})$ are said to be \emph{equivalent} if
	\begin{equation*}
		\frac{XX^{\prime}-dYY^{\prime}}{n} \in \Z, \qquad \frac{XY^{\prime}-X^{\prime}Y}{n} \in \Z.
	\end{equation*}
\end{defn}
Note that two solutions $(X, Y)$ and $(X^{\prime}, Y^{\prime})$ of a Pell-type equation $P_d(n)$ are equivalent if $\frac{z_1 \cdot \overline{z_2}}{n} \in \Z[\sqrt{d}]$, where $z_1=X+Y\sqrt{d}$ and $z_2=X^{\prime}+Y^{\prime}\sqrt{d}$.
\begin{defn}
	Consider a Pell equation or a Pell-type equation. The \emph{fundamental} solution $(X, Y)$ in an equivalence class of solutions is the one with smallest non-negative $Y$ if such a solution is unique in its class. Otherwise, there are two solutions $(X, Y)$, $(-X, Y)$, which are said to be \emph{conjugated}, with smallest non-negative $Y$: the fundamental solution is the one of the form $(X, Y)$ with $X>0$.
\end{defn}
One can show that a Pell equation $P_d(1)$ is always solvable and all its solutions are equivalent. If $z_0=a+b\sqrt{d}$ is the fundamental solution, then all the other solutions are of the form
\begin{equation*}
	z=\pm z_0^m, \qquad m \in \Z_{>0}.
\end{equation*}
Similarly, if $z_0$ is a fundamental solution of $P_d(n)$ and $\tilde{z}_0$ is the fundamental solution of $P_d(1)$, all the other solutions equivalent to $z_0$ are of the form
\begin{equation*}
	z=\pm \tilde{z}_0^m \cdot z_0, \qquad m \in \Z_{>0}.
\end{equation*}
We now recall the definition of \emph{positive} and \emph{minimal} solution of a Pell-type equation.
\begin{defn}
	Let $P_d(n)$ be a Pell-type equation. A solution $(X, Y)$ is \emph{positive} if $X, Y > 0$. The \emph{minimal} solution of $P_d(n)$ is the positive solution with smallest $X$.
\end{defn}
\begin{rmk} \label{rmk Pell (a, b) and (c, d)}
	The minimal solution of the Pell equation $P_t(1)$ coincides with the square of the minimal solution of the negative Pell equation $P_t(-1)$, if this exists, i.e., if $a+b\sqrt{t} \in \Z[\sqrt{t}]$ is the minimal solution of $N(z)=-1$ and $c+d\sqrt{t} \in \Z[\sqrt{t}]$ is the minimal solution of $N(z)=1$, then $c+d\sqrt{t}=(a+b\sqrt{t})^2=a^2+b^2+2ab\sqrt{t}$, hence
	\begin{equation} \label{eq relation Pt(1) and Pt(-1)}
		a^2+tb^2=c, \qquad d=2ab.
	\end{equation}
\end{rmk}
We conclude this section with the following elementary result, which will be useful in Section \ref{Section fixed locus} and in Section \ref{Section irreducibility property}.
\begin{prop} \label{prop D=bh-adelta then b odd}
	Let $(a, b)$ be the minimal solution of the Pell-type equation $P_d(-1)$. Then $b$ is odd.
\end{prop}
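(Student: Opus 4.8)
The plan is to argue by contradiction, assuming $b$ is even, and to exploit the norm relation $a^2 - db^2 = -1$ together with congruences modulo small powers of $2$. First I would reduce the defining equation modulo $4$. If $b$ were even, then $db^2 \equiv 0 \pmod 4$, so the equation $a^2 - db^2 = -1$ forces $a^2 \equiv -1 \equiv 3 \pmod 4$. This is the key obstruction: no square is congruent to $3$ modulo $4$, since $a^2 \equiv 0$ or $1 \pmod 4$ according as $a$ is even or odd. Hence the assumption that $b$ is even is untenable, and $b$ must be odd. (In fact the same computation modulo $4$ also shows $a$ is necessarily odd: if $a$ were even then $a^2 \equiv 0$, forcing $db^2 \equiv 1 \pmod 4$, which is impossible since $b$ odd gives $db^2 \equiv d \pmod 4$ and one would need $d \equiv 1$, but then $a^2 - db^2 \equiv 0 - 1 \equiv 3 \not\equiv -1$ is consistent only... — in any case, the parity of $a$ is not what is asked, so I would not dwell on it.)

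A subtlety worth checking is whether $d$ itself could be even, which would change the reduction modulo $4$ of the term $db^2$ when $b$ is odd; but this does not affect the argument above, because the contradiction is derived purely from $a^2 \equiv 3 \pmod 4$, which already follows from $b$ even regardless of the parity of $d$. Another point to verify is that the statement presupposes $P_d(-1)$ is solvable, so that a minimal solution $(a,b)$ exists; this is part of the hypothesis (``let $(a,b)$ be the minimal solution''), so no existence claim is needed. One should also confirm that $b \neq 0$: if $b = 0$ the equation reads $a^2 = -1$, which has no integer solution, so indeed any solution has $b > 0$, consistent with the definition of minimal (positive) solution.

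I expect this proof to be essentially immediate once the reduction modulo $4$ is carried out; there is no real obstacle, the whole content being the elementary fact that $-1$ is not a quadratic residue modulo $4$. The only thing to be careful about is phrasing: one should state the contrapositive cleanly, namely that $b$ even implies $a^2 \equiv 3 \pmod 4$, which is impossible, so $b$ is odd. No appeal to the theory of continued fractions, fundamental units, or the structure of solutions from Section \ref{Section Pell} is required; the elementary congruence suffices.
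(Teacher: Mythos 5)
Your argument is correct and is essentially the paper's own proof: the paper also assumes $b$ even, deduces $4 \mid a^2+1$, and rules this out by the case split $a$ even/odd, which is just the explicit form of your observation that $a^2 \equiv 3 \pmod 4$ is impossible. (The muddled parenthetical about the parity of $a$ is irrelevant to the claim and can simply be deleted.)
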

\begin{proof}
	Suppose that $b$ is even. Then $a^2+1$ is divisible by $4$, since $a^2-db^2=-1$, i.e., $a^2+1=4X$ for some $X \in \Z$.
	\begin{itemize}
		\item If $a$ is even, i.e., $a=2Y$ for some $Y \in \Z$, then $4Y^2+1=4X$, which is not possible.
		\item If $a$ is odd, i.e., $a=2Y+1$ for some $Y \in \Z$, then $4Y^2+1+4Y+1=4X$, which gives $4(X-Y^2-Y)=2$, which is not possible.
	\end{itemize}
	We conclude that $b$ is odd.
\end{proof}
\section{Generalities on IHS manifolds} \label{section generalities IHS manifolds}
In this section we recall basics on irreducible holomorphic symplectic manifolds.
\begin{defn}
	An \emph{irreducible holomorphic symplectic} (IHS) \emph{manifold} is a simply connected compact complex Kähler manifold $X$ such that $H^0(X, \Omega^2_X)$ is generated by a non-degenerate holomorphic $2$-form, called \emph{symplectic form}.
\end{defn}
The definition of IHS manifold generalises to higher dimensions the one of K3 surface, the only example of IHS manifold of dimension $2$, as shown by the Enriques--Kodaira classification of compact complex surfaces. The existence of a symplectic form implies that the dimension of an IHS manifold is necessarily even. If $X$ is an IHS manifold, then the $\C$-vector space $H^0(X, \Omega^p_X)$ is zero if $p$ is odd, and it is generated by $\sigma^{\frac{p}{2}}$ if $0 \le p \le \Dim(X)$ is even, where\,\,$\sigma$ is a symplectic form, see \cite[Proposition 3]{beauville1983varietes}. The Picard group $\Pic(X)$ and the Néron--Severi group, defined as $\NS(X):=H^{1,1}(X)_{\R} \cap H^2(X, \Z)$, are isomorphic: $\Pic(X)$ then embeds in the second cohomology group $H^2(X, \Z)$. We recall that a \emph{lattice} is a free $\Z$-module $L$ of finite rank with a symmetric bilinear form $b: L \times L \rightarrow \Z$: we denote by $q: L \rightarrow \Z$ the quadratic form $q(x):=b(x, x)$ for every $x \in L$. If $\mathcal{B}:=\{e_1, \dots , e_n\}$ is a $\Z$-basis of $L$, the \emph{Gram matrix} of $L$ associated to $\mathcal{B}$ is the $n \times n$ symmetric matrix
\begin{equation*}
	\begin{pmatrix}
		b(e_1, e_1) & \cdots & b(e_1, e_n) \\
		\vdots & \ddots & \vdots \\
		b(e_n, e_1) & \cdots & b(e_n, e_n)
	\end{pmatrix}.
\end{equation*}
We say that a lattice $L$ of rank $n$ is \emph{non-degenerate} if for any non-zero $l \in L$ there exists $l^{\prime} \in L$ such that $b(l, l^{\prime}) \neq 0$, equivalently, $\text{det}(G) \neq 0$ if $G$ is a Gram matrix of $L$. A lattice $L$ is \emph{even} if $b(l, l) \in 2\Z$ for every $l \in L$, and \emph{odd} if it is not even. The \emph{determinant} of a lattice $L$ is the determinant of a Gram matrix $G$ of the lattice, and the \emph{discriminant} of $L$ is $\text{disc}(L):=|\text{det}(G)|$. A lattice $L$ is \emph{unimodular} if $\text{disc}(L)=1$. If $L$ is a unimodular lattice, for every $x \in L$ there exists $y \in L$ such that $b(x, y)=1$. A \emph{sublattice} of a lattice $L$ is a free submodule $L^{\prime} \subseteq L$ with symmetric bilinear form $b^{\prime}:=b|_{L^{\prime} \times L^{\prime}}$. A sublattice $L^{\prime} \subseteq L$ is \emph{primitive} if $L/L^{\prime}$ is a free module. We define the \emph{direct sum} of two lattices $L_1$ and $L_2$ as the lattice $L_1 \oplus L_2$ whose bilinear form is $b(v_1+v_2, w_1+w_2):=b_1(v_1, w_1)+b_2(v_2, w_2)$ for every $v_1, w_1 \in L_1$ and $v_2, w_2 \in L_2$, where $b_1$ and $b_2$ are the bilinear forms of $L_1$ and $L_2$ respectively. If $L$ and $L^{\prime}$ are two lattices with bilinear forms $b$ and $b^{\prime}$ respectively, we call \emph{morphism of lattices} $\varphi: L \rightarrow L^{\prime}$ a morphism of $\Z$-modules such that for every $l_1, l_2 \in L$ we have $b(l_1, l_2)=b^{\prime}(\varphi(l_1), \varphi(l_2))$. Note that morphisms between two non-degenerate lattices are injective. We say that a lattice \emph{embeds primitively} in a lattice $L^{\prime}$ if there is a morphism $\varphi: L \rightarrow L^{\prime}$ such that $\varphi(L)$ is a primitive sublattice of $L^{\prime}$. An \emph{isometry} is a bijective morphism of lattices. The \emph{divisibility} of an element $l \in L$ in a lattice $L$ is the positive generator of the ideal $\{b(l, m) \, | \, m \in L\} \subseteq \Z$.

For a lattice $L$ of rank $n$ we write $L_{\R}:=L \otimes_{\Z} \R$ and we extend $\R$-bilinearly the bilinear form $b$ to $L_{\R}$, similarly we extend $q$ to $L_{\R}$. If the lattice is non-degenerate, the \emph{signature} of $L$ is the signature $(l_{(+)}, l_{(-)})$ of the quadratic form on $L_{\R}$. A non-degenerate lattice is \emph{positive definite} if $l_{(-)}=0$, similarly it is \emph{negative definite} if $l_{(+)}=0$, while it is \emph{indefinite} if $l_{(+)}, l_{(-)} \neq 0$. 
\begin{exmp} \label{exmp lattice <k>}
	If $k$ is a non-zero integer, let $\langle k \rangle$ be the rank one lattice $L=\Z e$ with bilinear form $b(e, e)=k$.
\end{exmp}
\begin{exmp} \label{exmp lattice U}
	Let $U$ be the \emph{hyperbolic lattice}, i.e., the unique unimodular lattice of rank $2$ and signature $(1, 1)$. Its Gram matrix is the following:
	\begin{equation*} 
		\begin{pmatrix}
			0 & 1 \\
			1 & 0
		\end{pmatrix}.
	\end{equation*}
\end{exmp}
\begin{exmp} \label{exmp E_8(-1)}
	Let $E_8(-1)$ be the even unimodular lattice of signature $(0, 8)$ whose Gram matrix is the following:
	\begin{equation*} 
		\begin{pmatrix}
			-2 & 1 &  &  &  &  &  &  \\
			1 & -2 & 1 &  &  &  &  &  \\
			& 1 & -2 & 1 &  &  &  & 1 \\
			&  & 1 & -2 & 1 &  &  &  \\
			&  &  & 1 & -2 & 1 &  &  \\
			&  &  &  & 1 & -2 & 1 &  \\
			&  &  &  &  & 1 & -2 &  \\
			&  & 1 &  &  &  &  & -2 \\
		\end{pmatrix}.
	\end{equation*}
\end{exmp}
Let $X$ be an IHS manifold. By the universal coefficient theorem, the second singular cohomology group $H^2(X, \Z)$ is torsion free, and this can be equipped with a non-degenerate integral quadratic form by the following result due to Beauville, Bogomolov and Fujiki, see \cite{beauville1983varietes} and \cite{fujiki1987rham}.
\begin{thm}[Beauville--Bogomolov--Fujiki form] \label{thm BBF form}
	Let $X$ be an IHS manifold of dimension $2n$. Then there exists an integral quadratic form $q_X: H^2(X, \Z) \rightarrow \Z$ and a constant $c_X \in \Q_{>0}$ such that
	\begin{equation*}
		\displaystyle\int_X \alpha^{2n} = c_X \frac{(2n)!}{n!2^n}q_X(\alpha)^n \qquad \text{for all}\,\,\alpha \in H^2(X, \Z).
	\end{equation*}
	The quadratic form $q_X$ is called Beauville--Bogomolov--Fujiki (BBF) form, and $c_X$ is called Fujiki constant of $X$. Moreover, $(H^2(X, \Z), q_X)$ is a lattice of signature $(3, b_2(X)-3)$, where $b_2(X)$ is the second Betti number of $X$.
\end{thm}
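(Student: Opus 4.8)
This is the classical Beauville--Bogomolov--Fujiki theorem, so the plan is to recall its proof, following \cite{beauville1983varietes} and \cite{fujiki1987rham}. Fix a symplectic form $\sigma\in H^0(X,\Omega^2_X)$ and rescale it so that $\int_X(\sigma\bar\sigma)^n=1$; this is possible because $\int_X(\sigma\bar\sigma)^n$ is a positive real number, as one checks by writing $\sigma$ in terms of the K\"ahler forms of a hyperk\"ahler metric. On $H^2(X,\C)$ introduce Beauville's quadratic form
\begin{equation*}
	q(\alpha)=\frac{n}{2}\int_X\alpha^2(\sigma\bar\sigma)^{n-1}+(1-n)\Bigl(\int_X\alpha\,\sigma^{n-1}\bar\sigma^{n}\Bigr)\Bigl(\int_X\alpha\,\sigma^{n}\bar\sigma^{n-1}\Bigr),
\end{equation*}
which is visibly a homogeneous quadratic polynomial in $\alpha$. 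The first goal is the Fujiki-type identity $\int_X\alpha^{2n}=c\,q(\alpha)^{n}$ for some constant $c>0$.

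To establish this identity, set $F(\alpha)=\int_X\alpha^{2n}$ and $G(\alpha)=q(\alpha)^n$, both homogeneous of degree $2n$ on $H^2(X,\C)$. On the $3$-plane $P=\langle\sigma,\bar\sigma,\omega\rangle$ spanned by $\sigma$, $\bar\sigma$ and a K\"ahler class $\omega$ one can compute both sides by hand --- expand $(x\omega+y\sigma+z\bar\sigma)^{2n}$, keep only the terms of Hodge type $(2n,2n)$ that survive integration, and match with the $n$-th power of the explicit restriction $q|_P$ (whose Gram matrix is easily evaluated, since $\int_X$ kills every class not of pure type $(2n,2n)$) --- and one verifies $F=c\,G$ on $P$, with $c>0$ because $F(\omega)=\int_X\omega^{2n}>0$. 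The substantive point is the passage from $P$ to all of $H^2(X,\C)$: it relies on the Hodge-theoretic rigidity imposed by the symplectic form --- hard Lefschetz for the $\mathfrak{sl}_2$-triples attached to $\sigma,\bar\sigma$ and to K\"ahler classes, together with the Fujiki--Bogomolov vanishing relations among $\alpha,\sigma,\bar\sigma$ --- which forces a polynomial identity valid on every such $P$ to be an identity on the whole space. I expect this reduction to be the main obstacle; the rest is bookkeeping.

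It remains to normalise $q$ and to read off the signature. The polynomial $F$ takes integer values on the lattice $H^2(X,\Z)$, and a degree-$2n$ form that is a perfect $n$-th power determines its quadratic base up to a scalar; combining this with a Galois-invariance argument shows that, after fixing one coefficient, $q$ is a rational quadratic form (independent of the chosen $\sigma$) and $c$ is rational. Multiplying $q$ by a suitable positive rational scalar produces the primitive integral form $q_X\colon H^2(X,\Z)\to\Z$, and the relation becomes $\int_X\alpha^{2n}=c_X\frac{(2n)!}{n!\,2^n}q_X(\alpha)^n$ with $c_X\in\Q_{>0}$, the combinatorial factor being a normalisation convention; non-degeneracy of $q_X$ is then clear since $c_X\neq0$ and $F\not\equiv0$. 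Finally, extending $q_X$ to $H^2(X,\R)$, one has the $q_X$-orthogonal decomposition $H^2(X,\R)=\langle\mathrm{Re}\,\sigma,\mathrm{Im}\,\sigma\rangle\oplus H^{1,1}(X)_{\R}$; a direct computation (using $\int_X\sigma^n\bar\sigma^n>0$) shows $q_X$ is positive definite on the first summand, while the classical Hodge index theorem, together with positivity of $q_X$ on the K\"ahler cone, gives signature $(1,h^{1,1}(X)-1)$ on the second. Since $b_2(X)=h^{1,1}(X)+2$, the total signature is $(3,b_2(X)-3)$.
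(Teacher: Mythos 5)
The paper does not prove this statement at all: it is quoted as a classical theorem with references to \cite{beauville1983varietes} and \cite{fujiki1987rham}, so the comparison must be with the classical argument you are trying to reconstruct. Your sketch writes down Beauville's form correctly and the outer architecture (normalise $\sigma$, prove a Fujiki-type identity, rescale to an integral primitive form, read off the signature) is the right one, but the central step is not actually carried out, and the way you distribute the difficulty is inverted. Verifying $\int_X\alpha^{2n}=c\,q(\alpha)^n$ on the $3$-plane $P=\langle\sigma,\bar\sigma,\omega\rangle$ is \emph{not} bookkeeping: expanding $(x\omega+y\sigma+z\bar\sigma)^{2n}$ and keeping the terms of type $(2n,2n)$ leaves you with the unknown integrals $W_m=\int_X\omega^{2m}(\sigma\bar\sigma)^{n-m}$, and the identity on $P$ is precisely the nontrivial assertion that all $W_m$ are determined by $W_1$ and $W_0$ with specific universal coefficients. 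This is not formal; it needs either a hyperk\"ahler metric in the class $\omega$ (Yau's theorem, then the pointwise quaternionic identity $(a\omega_I+b\omega_J+c\omega_K)^{2n}=(a^2+b^2+c^2)^n\omega_I^{2n}$) or the deformation-theoretic argument below. Conversely, the step you flag as "the main obstacle" — passing from such planes to all of $H^2(X,\C)$ — is the easy part \emph{once} the identity is known on $P_\omega$ for all K\"ahler classes $\omega$ with a uniform constant: the union of these planes contains an open subset of the real points of $H^2(X,\C)$, so a polynomial identity there extends automatically. Your proposed mechanism for that passage ("hard Lefschetz for the $\mathfrak{sl}_2$-triples attached to $\sigma,\bar\sigma$ and to K\"ahler classes forces a polynomial identity valid on every such $P$ to be an identity on the whole space") is not an argument and is not how the theorem is proved.

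The ingredient genuinely missing from your proposal is the input from deformation theory used in the cited proofs: by the local Torelli theorem the period points $\sigma_t$ of small deformations of $X$ sweep out an open subset of the quadric $\{q=0\}\subset\Pn(H^2(X,\C))$, and since $\sigma_t^{n+1}=0$ the degree-$2n$ polynomial $F(\alpha)=\int_X\alpha^{2n}$ vanishes to order at least $n$ along that quadric; Zariski density and irreducibility of $q$ then give $q^n\mid F$, hence $F=c\,q^n$ by degree count, with $c>0$ from evaluation at a K\"ahler class. No argument of this strength (nor its substitute via Yau's theorem sketched above) appears in your text, so the Fujiki relation — and with it the well-definedness of $q_X$ up to scale, its rationality, and the nondegeneracy needed for the signature computation — is left unproved. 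A smaller point: for the signature on $H^{1,1}(X,\R)$ you invoke "the classical Hodge index theorem", but what is actually needed is a Hodge--Riemann-type statement for the pairing $\alpha\mapsto\int_X(\sigma\bar\sigma)^{n-1}\alpha^2$ in dimension $2n>2$, which Beauville proves via the hyperk\"ahler metric and primitive decomposition; as written this is another assertion rather than a proof, though it is a much lighter gap than the one concerning the Fujiki identity.
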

An example of IHS manifold of dimension $2n$, where $n \ge 2$, is given by the \emph{Hilbert scheme of} $n$ \emph{points on a K3 surface} $S$, the scheme which parametrises zero-dimensional closed subschemes of length\,\,$n$ on a K3 surface: we denote it by $S^{[n]}$. Let $S^{(n)}$ be the quotient of $S^n=S \times \dots \times S$ by the symmetric group of $n$ elements, so $S^{(n)}$ is the variety of $0$-cycles of degree $n$. Then the \emph{Hilbert--Chow morphism} $\rho: S^{[n]} \rightarrow S^{(n)}$ is defined as follows: a point $[\xi] \in S^{[n]}$ is mapped to the cycle $\sum_x l(\Ol_{\xi, x}) \cdot x$, see for instance \cite{iversen2006linear}. The singular locus of $S^{(n)}$ is the so-called diagonal, i.e., the set of cycles $p_1+\dots + p_n$ such that there exist distinct $i$ and $j$ with $p_i=p_j$. The Hilbert--Chow morphism is a desingularization of $S^{(n)}$, and the pre-image of the diagonal is an irreducible divisor $E$ on $S^{[n]}$. The Hilbert scheme of $n$ points on a K3 surface is an IHS manifold by \cite[Théorème 3]{beauville1983varietes}. There exists a primitive class $\delta \in \Pic(S^{[n]})$ such that $2\delta=[E]$. Moreover, there is a primitive embedding of lattices
\begin{equation*}
	i: H^2(S, \Z) \hookrightarrow H^2(S^{[n]}, \Z)
\end{equation*}
such that $H^2(S^{[n]}, \Z)=i(H^2(S, \Z)) \oplus \Z\delta$, and $q_{S^{[n]}}(\delta)=-2(n-1)$. Recall that $H^2(S, \Z) \cong U^{\oplus 3} \oplus E_8(-1)^{\oplus 2}$, in particular $H^2(S, \Z)$ is an even unimodular lattice of signature $(3, 19)$, see for instance \cite[$\S \text{VII.3}$]{barth2015compact}. Then there is an isometry of lattices
\begin{equation*}
	H^2(S^{[n]}, \Z) \cong U^{\oplus 3} \oplus E_8(-1)^{\oplus 2} \oplus \langle -2(n-1) \rangle,
\end{equation*}
similarly $\Pic(S^{[n]}) =i(\Pic(S)) \oplus \Z \delta$: see \cite[$\S 6$]{beauville1983varietes} for details. The Fujiki constant of the Hilbert scheme of $n$ points on a K3 surface $S$ is $c_{S^{[n]}}=1$, see \cite[$\S 9$]{beauville1983varietes}. In particular for K3 surfaces the BBF form coincides with the intersection form. Moreover, the singular cohomology ring $H^*(S^{[n]}, \Z)$ for a K3 surface $S$ and $n \ge 1$ is torsion free by \cite[Theorem 1]{markman2007integral}. When $n=2$, the variety $S^{[2]}$ is usually called \emph{Hilbert square of a K3 surface}. An IHS manifold which is deformation equivalent to the Hilbert square of a K3 surface is said to be of $K3^{[2]}$-\emph{type}.

The other known examples of IHS manifolds up to deformation equivalence are \emph{generalised Kummer varieties}, see \cite[$\S 7$]{beauville1983varietes}, an isolated example of dimension $10$ and second Betti number $b_2=24$, see \cite{o1999desingularized}, and an isolated example of dimension $6$ and second Betti number $b_2=8$, see \cite{o2003new}. We do not discuss details on these examples since in this paper we deal only with Hilbert squares of K3 surfaces.

We conclude this section with the following useful correspondence between primitive elements in $H^2(X, \Z)$ and primitive elements in $H_2(X, \Z)_f$, where $X$ is an IHS manifold and $H_2(X, \Z)_f$ is the torsion free quotient group of the second singular homology $H_2(X, \Z)$. See \cite{hassett2001rational} for details.
\begin{prop}[Hassett--Tschinkel] \label{Prop Hassett Tschinkel}
	Let $X$ be an IHS manifold and denote by $H_2(X, \Z)_f$ the torsion free quotient group of $H_2(X, \Z)$. Let $(\, \cdot \, , \, \cdot \,)$ be the BBF bilinear form. Then there is a correspondence between primitive elements in $H^2(X, \Z)$ and primitive elements in $H_2(X, \Z)_f$. In particular:
	\begin{enumerate}[label=(\roman*)]
		\item For every primitive element $R \in H_2(X, \Z)_f$ there exists a unique class $\omega \in H^2(X, \Q)$ such that
		\begin{equation*}
			\epsilon(R \cap v)=(\omega, v) \qquad \text{for every}\,\,v \in H^2(X, \Z),
		\end{equation*}
		where $\epsilon: H_0(X, \Z) \xrightarrow{\sim} \Z$ is the isomorphism in \cite[Theorem $\textnormal{IV}.2.1$]{bredon2013topology} and $\cap$ is the cap product. The primitive $\rho \in H^2(X, \Z)$ associated to $R$ is the primitive element such that $c\rho=\omega$ for some $c \in \Q_{>0}$.
		\item For every primitive element $\rho \in H^2(X, \Z)$ of divisibility $\textnormal{div}(\rho)=d$ in $(H^2(X, \Z), q_X)$, there exists a unique primitive $R \in H_2(X, \Z)_f$ such that 
		\begin{equation*}
			d\cdot \epsilon (R \cap v)=(\rho, v) \qquad \text{for every}\,\,v \in H^2(X, \Z).
		\end{equation*}
	\end{enumerate}
\end{prop}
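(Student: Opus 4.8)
The plan is to reduce the statement to two facts already available: that the cap–product (Kronecker) pairing between $H^2(X,\Z)$ and $H_2(X,\Z)_f$ is perfect, and that the Beauville--Bogomolov--Fujiki form of Theorem \ref{thm BBF form} is non-degenerate. First I would record that, since $X$ is simply connected, $H_1(X,\Z)=0$, so the universal coefficient theorem gives a canonical isomorphism $H^2(X,\Z)\xrightarrow{\sim}\operatorname{Hom}(H_2(X,\Z),\Z)=\operatorname{Hom}(H_2(X,\Z)_f,\Z)$ which is exactly evaluation via the pairing $(v,R)\mapsto\epsilon(R\cap v)$. Dualising the finitely generated free $\Z$-modules on both sides shows this pairing $H^2(X,\Z)\times H_2(X,\Z)_f\to\Z$ is perfect, so each of the two lattices is identified with the $\Z$-dual of the other; in particular a functional $H^2(X,\Z)\to\Z$ is surjective if and only if the element of $H_2(X,\Z)_f$ it corresponds to is primitive. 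On the other hand, non-degeneracy of $q_X$ means $v\mapsto(v,\,\cdot\,)$ embeds $H^2(X,\Z)$ into $\operatorname{Hom}(H^2(X,\Z),\Z)$ with finite cokernel, and tensoring with $\Q$ yields an isomorphism $H^2(X,\Q)\xrightarrow{\sim}\operatorname{Hom}(H^2(X,\Z),\Q)$.

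To obtain (i) I would fix a primitive $R\in H_2(X,\Z)_f$ and look at the functional $v\mapsto\epsilon(R\cap v)$ on $H^2(X,\Z)$, which by the previous paragraph is surjective onto $\Z$, hence nonzero. Extending it $\Q$-linearly and applying the $\Q$-isomorphism induced by $q_X$, there is a unique $\omega\in H^2(X,\Q)$ with $(\omega,v)=\epsilon(R\cap v)$ for all $v\in H^2(X,\Z)$, and $\omega\neq0$. Since every nonzero rational class is uniquely $c\rho$ with $\rho\in H^2(X,\Z)$ primitive and $c\in\Q_{>0}$ once the sign of $\rho$ is fixed, this produces the primitive class $\rho$ attached to $R$.

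For (ii), given a primitive $\rho\in H^2(X,\Z)$ with $d:=\textnormal{div}(\rho)$, the definition of divisibility says precisely that $v\mapsto\tfrac1d(\rho,v)$ is an integer-valued functional on $H^2(X,\Z)$, so under the perfect Kronecker pairing it corresponds to a unique $R\in H_2(X,\Z)_f$ with $d\cdot\epsilon(R\cap v)=(\rho,v)$ for every $v$. If $R$ were not primitive, say $R=kR'$ with $k\ge2$, then $(\rho,v)\in dk\Z$ for all $v$, which forces $\textnormal{div}(\rho)\ge dk>d$, a contradiction; hence $R$ is primitive. Finally I would check that the assignments of (i) and (ii) are mutually inverse, which upgrades the two one-sided statements to the asserted bijection: starting from $\rho$ and the $R$ of (ii), non-degeneracy of $q_X$ forces the $\omega$ of (i) to equal $\tfrac1d\rho$, so $c=1/d>0$ and one recovers $\rho$; conversely, starting from a primitive $R$ with $\omega=c\rho$, surjectivity of $v\mapsto\epsilon(R\cap v)$ gives $\{(\rho,v):v\in H^2(X,\Z)\}=\tfrac1c\Z=\textnormal{div}(\rho)\Z$, so $c=1/\textnormal{div}(\rho)$ and the recipe of (ii) returns the same $R$.

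I expect the only genuinely delicate point to be the set-up in the first paragraph: one has to invoke simple connectedness to discard the $\operatorname{Ext}$-term in the universal coefficient theorem and to pass to the torsion-free quotient $H_2(X,\Z)_f$ before the Kronecker pairing becomes perfect. Once that perfectness and the non-degeneracy of $q_X$ are in hand, the rest is elementary linear algebra over $\Z$ and $\Q$, together with the observation that for a functional on a finitely generated free $\Z$-module, primitivity of the corresponding dual vector is equivalent to surjectivity, which is what pins down the divisibility bookkeeping in (i)--(ii).
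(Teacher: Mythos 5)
Your argument is correct, and there is in fact nothing in the paper to compare it against: Proposition \ref{Prop Hassett Tschinkel} is quoted from Hassett--Tschinkel with only the pointer ``See \cite{hassett2001rational} for details'', so no proof is given in the text. Your self-contained route is the standard one and it is complete: simple connectedness of an IHS manifold gives $H_1(X,\Z)=0$, so the universal coefficient theorem identifies $H^2(X,\Z)$ with $\textnormal{Hom}(H_2(X,\Z)_f,\Z)$ precisely through the Kronecker pairing $(v,R)\mapsto\epsilon(R\cap v)$; together with the double-dual identification this makes the pairing unimodular, so primitivity of $R$ is equivalent to surjectivity of the associated functional, and the non-degeneracy of $q_X$ from Theorem \ref{thm BBF form} gives the rational isomorphism $H^2(X,\Q)\cong\textnormal{Hom}(H^2(X,\Z),\Q)$ needed for the existence and uniqueness of $\omega$ in (i). Your divisibility bookkeeping in (ii) and the check that the two assignments are mutually inverse (forcing $\omega=\rho/\textnormal{div}(\rho)$, i.e.\ $c=1/d$) are exactly what upgrades the two one-sided statements to the asserted correspondence; this is also the form in which the paper later uses the result (e.g.\ $h^{\vee}=h$, $\delta^{\vee}=\delta/2$ in Theorem \ref{thm H_2(S2, Z)}). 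The only cosmetic remarks are that the clause ``once the sign of $\rho$ is fixed'' is redundant, since requiring $c\in\Q_{>0}$ already pins down the sign of the primitive class, and that the identification of the universal-coefficient map with the cap-product pairing is true up to the usual sign conventions, which do not affect the argument.
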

\section{Generalities on IHS manifolds of $K3^{[2]}$-type} \label{section generalities IHS K32 type}
Let $X$ be an IHS manifold of dimension $4$ of $K3^{[2]}$-type. In this section we recall the link between the intersection pairing on $H^4(X, \Q)$ and the $\Q$-bilinear extension on $H^2(X, \Q)$ of the BBF form, and we introduce the dual $q_X^{\vee}$ of the BBF quadratic form. We refer to \cite[$\S 2$]{o2008irreducible}.

First of all, we state the following corollary of Verbitsky's results in \cite{verbitsky1996cohomology}, obtained by Guan in \cite{guan2001betti}, see also \cite[Corollary 2.5]{o2010higher}. We denote by $b_i(X)$ the $i$-th Betti number of $X$.

\begin{prop} \label{prop Guan}
	Let $X$ be an IHS manifold of dimension $4$. Then $b_2(X) \le 23$. If equality holds then $b_3(X)=0$ and the map
	\begin{equation*}
		\textnormal{Sym}^2H^2(X, \Q) \rightarrow H^4(X, \Q)
	\end{equation*}
	induced by the cup product is an isomorphism. In particular this happens when $X$ is an IHS fourfold of $K3^{[2]}$-type.
\end{prop}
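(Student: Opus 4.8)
The plan is to deduce the statement from two standard inputs --- Verbitsky's structure theorem for the subalgebra of $H^\bullet(X,\Q)$ generated by $H^2(X,\Q)$, and the linear relation among the Betti numbers of a compact hyperkähler fourfold --- after which the conclusion is a one-line dimension count.

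First I would recall from \cite{verbitsky1996cohomology} that, for an IHS manifold of complex dimension $2n$, the kernel of the cup-product homomorphism $\Sym^\bullet H^2(X,\Q) \to H^\bullet(X,\Q)$ is the ideal generated by the classes $\alpha^{n+1}$ with $q_X(\alpha)=0$; these lie in degrees $\ge 2(n+1)$, so in degree $2k$ with $k \le n$ the map $\Sym^k H^2(X,\Q) \to H^{2k}(X,\Q)$ is injective. Applying this with $n=2$ and $k=2$, the map $\Sym^2 H^2(X,\Q) \to H^4(X,\Q)$ of the statement is injective, hence
\begin{equation*}
	b_4(X) \;\ge\; \dim_{\Q} \Sym^2 H^2(X,\Q) \;=\; \binom{b_2(X)+1}{2} \;=\; \frac{b_2(X)\bigl(b_2(X)+1\bigr)}{2}.
\end{equation*}

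Next I would invoke the Salamon relation for compact hyperkähler fourfolds (used in \cite{guan2001betti}, cf.\ \cite[Corollary 2.5]{o2010higher}); since an IHS manifold is simply connected it reads
\begin{equation*}
	b_4(X) \;=\; 46 + 10\,b_2(X) - b_3(X).
\end{equation*}
As $b_3(X) \ge 0$, combining the two displays gives $\tfrac{1}{2}b_2(X)\bigl(b_2(X)+1\bigr) \le 46 + 10\,b_2(X)$, i.e.\ $b_2(X)^2 - 19\,b_2(X) - 92 \le 0$; since $t^2 - 19t - 92$ has roots $-4$ and $23$, this forces $b_2(X) \le 23$. In the boundary case $b_2(X)=23$ the same two displays give $276 = \binom{24}{2} \le 46 + 230 - b_3(X) = 276 - b_3(X)$, so $b_3(X)=0$ and $b_4(X) = 276 = \dim_{\Q}\Sym^2 H^2(X,\Q)$; an injective linear map between finite-dimensional $\Q$-vector spaces of equal dimension is an isomorphism, so $\Sym^2 H^2(X,\Q) \to H^4(X,\Q)$ is an isomorphism. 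For the final assertion I would note that an IHS fourfold of $K3^{[2]}$-type has $b_2 = 23$, since $b_2$ is a deformation invariant and $H^2(S^{[2]},\Z) \cong U^{\oplus 3}\oplus E_8(-1)^{\oplus 2}\oplus\langle -2\rangle$ has rank $23$.

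I do not expect a genuine obstacle: the content is entirely in the two cited results, and the rest is the elementary estimate above. The only point deserving care is quoting the Betti-number identity with the correct constants --- this can be sanity-checked on $S^{[2]}$, where $(b_2,b_3,b_4)=(23,0,276)$, and on the generalised Kummer fourfold, where $(b_2,b_3,b_4)=(7,8,108)$, both satisfying $b_4 = 46 + 10\,b_2 - b_3$.
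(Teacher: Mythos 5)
Your argument is correct, and it is exactly the standard proof of this statement: the paper itself gives no proof but simply quotes it as a corollary of Verbitsky's results due to Guan (cf.\ O'Grady's survey), and those sources argue precisely as you do, combining the injectivity of $\Sym^2 H^2(X,\Q)\to H^4(X,\Q)$ from Verbitsky with Salamon's relation $b_4=46+10b_2-b_3$ (valid since $b_1=0$ for an IHS manifold) and the resulting inequality $b_2^2-19b_2-92\le 0$. Your sanity checks on $S^{[2]}$ and the generalised Kummer fourfold confirm the constants, so there is nothing to correct.
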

Since $X$ is a compact complex manifold of dimension $\Dim_{\C}(X)=4$, the singular cohomology group $H^4(X, \Z)$ has an intersection pairing induced by the cup product:
\begin{equation*}
	\langle \, \cdot \, , \, \cdot \, \rangle : H^4(X, \Z) \times H^4(X, \Z) \rightarrow \Z, \qquad \langle \alpha, \beta \rangle :=\displaystyle\int_X \alpha \beta.
\end{equation*}
We write $\langle \cdot \, , \, \cdot \, \rangle$ also for the $\Q$-bilinear extension of the intersection pairing above to $H^4(X, \Q) \times H^4(X, \Q)$, obtaining a $\Q$-valued intersection pairing on $\Sym^2H^2(X, \Q)$. Let $X$ be an IHS manifold of $K3^{[2]}$-type: the following relation between $\langle \, \cdot \, , \, \cdot \, \rangle$ and the $\Q$-extension of the BBF form on $H^2(X, \Q)$ holds, see \cite[Remark 2.1]{o2008irreducible}.
\begin{prop}[O'Grady] \label{prop rmk 2.1 OG}
	Let $X$ be an IHS fourfold of $K3^{[2]}$-type. The intersection pairing $\langle \, \cdot \, , \, \cdot \, \rangle$ defined above is the bilinear form on $\textnormal{Sym}^2H^2(X, \Q)$ given by
	\begin{equation*}
		\langle \alpha_1\alpha_2, \alpha_3\alpha_4 \rangle =(\alpha_1, \alpha_2)(\alpha_3, \alpha_4)+(\alpha_1, \alpha_3)(\alpha_2, \alpha_4)+(\alpha_1, \alpha_4)(\alpha_2, \alpha_3)
	\end{equation*}
	for every $\alpha_1, \alpha_2, \alpha_3, \alpha_4 \in H^2(X, \Q)$, where $(\,\cdot \, , \, \cdot \, )$ denotes the BBF bilinear form on $H^2(X, \Q)$.
\end{prop}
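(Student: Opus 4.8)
The plan is to reduce the identity to the Beauville--Bogomolov--Fujiki relation by a polarization argument. First I would invoke Proposition \ref{prop Guan}: since $X$ is of $K3^{[2]}$-type the cup product induces an isomorphism $\Sym^2 H^2(X,\Q) \xrightarrow{\sim} H^4(X,\Q)$. Under this isomorphism the intersection pairing $\langle\,\cdot\,,\,\cdot\,\rangle$ on $H^4(X,\Q)$ corresponds to the symmetric bilinear form on $\Sym^2 H^2(X,\Q)$ which on decomposable elements is $\langle \alpha_1\alpha_2,\alpha_3\alpha_4\rangle = \int_X \alpha_1\alpha_2\alpha_3\alpha_4$; equivalently, it is encoded by the $4$-linear form
\[
B(\alpha_1,\alpha_2,\alpha_3,\alpha_4):=\int_X \alpha_1\alpha_2\alpha_3\alpha_4
\]
on $H^2(X,\Q)$. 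Since the $\alpha_i$ lie in even degree, the cup product is commutative, so $B$ is symmetric in its four arguments; the right-hand side of the claimed formula, which we denote $B'(\alpha_1,\alpha_2,\alpha_3,\alpha_4)$, is manifestly symmetric and $4$-linear as well. Thus both $B$ and $B'$ are symmetric $4$-linear forms on the $\Q$-vector space $H^2(X,\Q)$, and it is enough to prove $B=B'$.

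Next I would use that a symmetric $4$-linear form over a field of characteristic $0$ is determined by its restriction to the diagonal: expanding $B(v,v,v,v)$ for $v=t_1\alpha_1+\dots+t_4\alpha_4$ by multilinearity and reading off the coefficient of $t_1t_2t_3t_4$ recovers $4!\,B(\alpha_1,\alpha_2,\alpha_3,\alpha_4)$, and likewise for $B'$. Hence it suffices to check $B(\alpha,\alpha,\alpha,\alpha)=B'(\alpha,\alpha,\alpha,\alpha)$ for every $\alpha\in H^2(X,\Q)$.

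For the left-hand side, Theorem \ref{thm BBF form} with $2n=4$ gives $\int_X\alpha^4 = c_X\,\frac{4!}{2!\,2^2}\,q_X(\alpha)^2 = 3\,c_X\,q_X(\alpha)^2$; since the Fujiki constant is a deformation invariant and equals $1$ for the Hilbert square of a K3 surface (Section \ref{section generalities IHS manifolds}), we have $c_X=1$ for every $X$ of $K3^{[2]}$-type, so $B(\alpha,\alpha,\alpha,\alpha)=3\,q_X(\alpha)^2$. For the right-hand side, setting $\alpha_1=\alpha_2=\alpha_3=\alpha_4=\alpha$ in $B'$ yields $(\alpha,\alpha)(\alpha,\alpha)+(\alpha,\alpha)(\alpha,\alpha)+(\alpha,\alpha)(\alpha,\alpha)=3\,q_X(\alpha)^2$. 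The two diagonal restrictions agree, hence $B=B'$, which is the assertion.

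I do not expect a genuine obstacle: the only input beyond formal multilinear algebra is the value $c_X=1$ of the Fujiki constant, which is standard, together with the surjectivity of the cup product in degree $\le 4$ supplied by Proposition \ref{prop Guan}. In effect the statement is a repackaging of the Fujiki relation and that surjectivity.
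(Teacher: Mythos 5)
Your proof is correct: the paper itself states this proposition without proof, citing O'Grady's Remark 2.1, and your argument --- noting that both $\int_X\alpha_1\alpha_2\alpha_3\alpha_4$ and the right-hand side are symmetric $4$-linear forms on $H^2(X,\Q)$, so it suffices to compare diagonals, which the Fujiki relation $\int_X\alpha^4=3c_Xq_X(\alpha)^2$ with $c_X=1$ for $K3^{[2]}$-type settles, with Proposition \ref{prop Guan} identifying $H^4(X,\Q)$ with $\Sym^2H^2(X,\Q)$ --- is exactly the standard polarization argument behind that remark. No gaps.
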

Let $q_X$ be the BBF quadratic form on $X$. Let $\{e_1, \dots , e_{23}\}$ be a basis of $H^2(X, \Q)$ and $\{e_1^{\vee}, \dots , e_{23}^{\vee}\}$ be the dual basis in $H^2(X, \Q)^{\vee}$, i.e., $e_i^{\vee}(e_j)=\delta_{i,j}$. Then we have 
\begin{equation*}
	q_X=\displaystyle\sum_{i,j}g_{i,j}e_i^{\vee}\otimes e_j^{\vee}, \qquad q_X^{\vee}=\displaystyle\sum_{i,j}m_{i,j}e_ie_j,
\end{equation*}
where $g_{i,j}:=(e_i, e_j)$, the matrix $(g_{i,j})$ is symmetric and $(m_{i,j})=(g_{i,j})^{-1}$. The values of the products $\langle q_X^{\vee}, \alpha \rangle$ for every $\alpha \in H^4(X, \Q)$ are given by the following proposition, see \cite[Proposition 2.2]{o2008irreducible}.
\begin{prop}[O'Grady] \label{prop O'Grady int q_Xvee}
	Let $X$ be an IHS fourfold of $K3^{[2]}$-type. Let $\langle \, \cdot \, , \, \cdot \, \rangle$ be the bilinear form described in Proposition $\ref{prop rmk 2.1 OG}$. Then $\langle \, \cdot \, , \, \cdot \, \rangle$ is non-degenerate and 
	\begin{equation*}
		\begin{array}{l}
			\langle q_X^{\vee}, \alpha \beta \rangle = 25(\alpha, \beta) \qquad \text{for all}\,\,\alpha, \beta \in H^2(X, \Q), \\[1ex]
			\langle q_X^{\vee}, q_X^{\vee} \rangle =23 \cdot 25.
		\end{array}
	\end{equation*} 
\end{prop}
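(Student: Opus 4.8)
The plan is to reduce the statement to the explicit quadratic formula for the intersection pairing furnished by Proposition~\ref{prop rmk 2.1 OG}, and then carry out a short computation with the Gram matrix $G:=(g_{i,j})$ of the BBF form and its inverse $M:=(m_{i,j})$, using repeatedly that $MG=GM=I_{23}$ (recall $b_2(X)=23$ by Proposition~\ref{prop Guan}).

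First I would dispatch non-degeneracy. By Proposition~\ref{prop Guan} the cup product gives an isomorphism $\Sym^2H^2(X,\Q)\xrightarrow{\sim}H^4(X,\Q)$, and the intersection pairing on $H^4(X,\Q)$ is non-degenerate by Poincar\'e duality on the compact complex fourfold $X$; transporting through this isomorphism, the pairing $\langle\,\cdot\,,\,\cdot\,\rangle$ on $\Sym^2H^2(X,\Q)$ — which by construction is exactly the one described in Proposition~\ref{prop rmk 2.1 OG} — is non-degenerate. (One could instead verify this directly by computing the Gram matrix of the form of Proposition~\ref{prop rmk 2.1 OG} in an orthogonal basis of $H^2(X,\Q)$, but invoking Poincar\'e duality is cleaner.)

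Next I would prove $\langle q_X^\vee,\alpha\beta\rangle=25(\alpha,\beta)$. Writing $q_X^\vee=\sum_{i,j}m_{i,j}e_ie_j$ and applying Proposition~\ref{prop rmk 2.1 OG} term by term,
\[
\langle q_X^\vee,\alpha\beta\rangle=\sum_{i,j}m_{i,j}\bigl[(e_i,e_j)(\alpha,\beta)+(e_i,\alpha)(e_j,\beta)+(e_i,\beta)(e_j,\alpha)\bigr].
\]
The first summand gives $\bigl(\sum_{i,j}m_{i,j}g_{i,j}\bigr)(\alpha,\beta)=\operatorname{tr}(MG)\,(\alpha,\beta)=23\,(\alpha,\beta)$. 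Symmetry of $M$ shows the last two summands coincide; expanding $\alpha,\beta$ in the basis $\{e_k\}$ with coordinate vectors $a,b$ one identifies $\sum_{i,j}m_{i,j}(e_i,\alpha)(e_j,\beta)$ with $(Ga)^{\mathsf T}M(Gb)=a^{\mathsf T}GMG\,b=a^{\mathsf T}Gb=(\alpha,\beta)$, using $G^{\mathsf T}=G$ and $GMG=G$. Hence $\langle q_X^\vee,\alpha\beta\rangle=(23+1+1)(\alpha,\beta)=25(\alpha,\beta)$. Finally, expanding the second copy of $q_X^\vee$ as $\sum_{k,l}m_{k,l}e_ke_l$ and feeding it into the formula just proved gives $\langle q_X^\vee,q_X^\vee\rangle=25\sum_{k,l}m_{k,l}(e_k,e_l)=25\operatorname{tr}(MG)=23\cdot 25$.

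I do not expect a real obstacle here: the only points needing care are the bookkeeping with the symmetric matrices $G$, $M$ and their product (in particular that the relevant contraction $\sum_{i,j}m_{i,j}g_{i,j}$ is $\operatorname{tr}(MG)=b_2(X)=23$, not some other contraction), and checking that $q_X^\vee$ as written does not depend on the chosen basis of $H^2(X,\Q)$, which follows from $M=G^{-1}$ transforming contravariantly under change of basis. The one input doing genuine work is Proposition~\ref{prop Guan} (i.e.\ Verbitsky--Guan), which simultaneously yields $b_2(X)=23$, the non-degeneracy, and the precise constant $23$.
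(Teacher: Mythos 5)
Your computation is correct: expanding $q_X^{\vee}=\sum_{i,j}m_{i,j}e_ie_j$ and applying the formula of Proposition \ref{prop rmk 2.1 OG} term by term, the contraction $\sum_{i,j}m_{i,j}g_{i,j}=\operatorname{tr}(MG)=b_2(X)=23$ and the two cross terms each reduce, via $a^{\mathsf T}GMGb=a^{\mathsf T}Gb$, to $(\alpha,\beta)$, giving the constant $23+1+1=25$; feeding $q_X^{\vee}$ back in then yields $23\cdot 25$, and non-degeneracy follows from Poincar\'e duality on $H^4(X,\Q)$ transported through the Verbitsky--Guan isomorphism $\Sym^2H^2(X,\Q)\xrightarrow{\sim}H^4(X,\Q)$. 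Be aware, though, that the paper itself offers no argument for this statement: it is quoted verbatim from O'Grady \cite[Proposition 2.2]{o2008irreducible}, so there is no internal proof to compare against. What your write-up adds is a self-contained verification of that cited input (essentially the same Gram-matrix computation O'Grady performs), which is a reasonable thing to record but is not logically required by the paper, whose use of the proposition is purely as a black box.
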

Recall that \emph{rational Hodge classes} and \emph{integral Hodge classes} of type $(k, k)$ on a projective manifold $Y$ are elements belonging respectively to
\begin{equation*}
	H^{k,k}(Y, \Q):= H^{2k}(Y, \Q) \cap H^{k,k}(Y), \qquad H^{k,k}(Y, \Z):= H^{2k}(Y, \Z) \cap H^{k,k}(Y).
\end{equation*}
O'Grady has shown in \cite[$\S 3$]{o2008irreducible} that $q_X^{\vee}$ is a rational multiple of $c_2(X)$, the second Chern class of the tangent bundle of $X$, in particular it is an element of $H^{2,2}(X, \Q)$, i.e., it is a rational Hodge class of type $(2, 2)$. 
\begin{prop}[O'Grady] \label{prop O'Grady c_2(X)}
	Let $X$ be an IHS fourfold of $K3^{[2]}$-type. Then $q_X^{\vee} \in H^{2,2}(X, \Q)$, i.e., $q_X^{\vee}$ is a rational Hodge class of $X$ of type $(2, 2)$, and
	\begin{equation*}
		\frac{6}{5}q_X^{\vee}=c_2(X) \in H^{2,2}(X, \Z).
	\end{equation*}
	Moreover, $\frac{2}{5}q_X^{\vee} \in H^{2,2}(X, \Z)$ is an integral Hodge class of $X$ of type $(2, 2)$.
\end{prop}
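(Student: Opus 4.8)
The plan is to first show that $c_2(X)$ is a rational multiple of $q_X^\vee$ with a \emph{universal} proportionality constant, then to fix that constant by a single Hirzebruch--Riemann--Roch computation on a Hilbert square, and finally to upgrade this to the sharper statement $\tfrac25 q_X^\vee \in H^{2,2}(X,\Z)$ using the lattice structure of $H^2(S^{[2]},\Z)$ together with a deformation argument. For the proportionality, recall from Proposition \ref{prop Guan} that the cup product identifies $H^4(X,\Q)$ with $\Sym^2 H^2(X,\Q)$, and from Proposition \ref{prop O'Grady int q_Xvee} that the resulting pairing $\langle\,\cdot\,,\,\cdot\,\rangle$ on $\Sym^2 H^2(X,\Q)$ is non-degenerate. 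Since the squares $\alpha^2$ (for $\alpha \in H^2(X,\Q)$) span $\Sym^2 H^2(X,\Q)$, the linear map $\Phi$ sending $\gamma$ to the quadratic form $\alpha \mapsto \langle \gamma,\alpha^2\rangle$ on $H^2(X,\Q)$ is injective. Now $c_2(X) \in H^4(X,\Z)$ is invariant under the monodromy of any smooth family of IHS fourfolds of $K3^{[2]}$-type (the second Chern class of the relative tangent bundle restricts to $c_2$ fibrewise, so it gives a flat section), and it is known that $H^2(X,\Q)$ is irreducible under the monodromy group of $K3^{[2]}$-type and admits, up to scalars, a unique invariant symmetric bilinear form, namely the BBF form. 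Hence $\Phi(c_2(X)) = c\cdot q_X$ for a universal $c \in \Q$, while $\Phi(q_X^\vee) = 25\,q_X$ by Proposition \ref{prop O'Grady int q_Xvee}; injectivity of $\Phi$ then forces
\begin{equation*}
	c_2(X) = \tfrac{c}{25}\,q_X^\vee,
\end{equation*}
and in particular $q_X^\vee = \tfrac{25}{c}\,c_2(X) \in H^{2,2}(X,\Q)$.

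To determine $c$ it suffices to evaluate on $X = S^{[2]}$ for an ample polarised K3 surface $(S,\mathcal L)$ and $\alpha = i(\mathcal L)$, so that $q_X(\alpha) = \mathcal L^2$. On one hand, Hirzebruch--Riemann--Roch for the line bundle $L$ with $c_1(L) = \alpha$, using $c_1(S^{[2]}) = 0$ and $\int_{S^{[2]}}\alpha^4 = 3\,q_X(\alpha)^2$ (Theorem \ref{thm BBF form} with $c_{S^{[2]}}=1$), expresses $\chi(S^{[2]},L)$ in terms of $q_X(\alpha)$ and $\int_{S^{[2]}}\alpha^2 c_2(S^{[2]})$. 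On the other hand, the Hilbert--Chow morphism $\rho$ satisfies $R\rho_*\Ol_{S^{[2]}} \cong \Ol_{S^{(2)}}$, and $i(\mathcal L)$ is the pull-back along $\rho$ of the line bundle on $S^{(2)}$ obtained by descending $\mathcal L \boxtimes \mathcal L$; hence $\chi(S^{[2]},L) = \binom{\chi(S,\mathcal L)+1}{2}$ with $\chi(S,\mathcal L) = 2 + \tfrac12\mathcal L^2$. Comparing the two expressions gives $\int_{S^{[2]}}\alpha^2 c_2(S^{[2]}) = 30\,\mathcal L^2$, hence $c = 30$ and $\tfrac65\,q_X^\vee = c_2(X)$; the integrality $c_2(X) \in H^{2,2}(X,\Z)$ is automatic, being a Chern class of type $(2,2)$.

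For the last assertion I would again work on $X = S^{[2]}$ and use the orthogonal decomposition $H^2(S^{[2]},\Z) = i(H^2(S,\Z)) \oplus \Z\delta$ with $i(H^2(S,\Z)) \cong U^{\oplus 3} \oplus E_8(-1)^{\oplus 2}$ unimodular and $q_{S^{[2]}}(\delta) = -2$. In a basis adapted to this splitting the inverse Gram matrix is block diagonal, with an integral block (the inverse of the unimodular block) and the entry $-\tfrac12$, so $q_X^\vee = \eta - \tfrac12\delta^2$ with $\eta \in \Sym^2 H^2(S^{[2]},\Z)$; since $H^*(S^{[2]},\Z)$ is torsion free, $\eta$ and $\delta^2$ lie in $H^4(S^{[2]},\Z)$, whence $2\,q_X^\vee \in H^4(S^{[2]},\Z)$. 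Together with $\tfrac65\,q_X^\vee \in H^4(S^{[2]},\Z)$, the $\Z$-module generated inside $\Q\,q_X^\vee$ by $\tfrac65\,q_X^\vee$ and $2\,q_X^\vee$ equals $\tfrac25\Z\,q_X^\vee$, so $\tfrac25\,q_X^\vee \in H^4(S^{[2]},\Z)$; being a rational multiple of $q_X^\vee$ it is of type $(2,2)$. Finally, for an arbitrary $X$ of $K3^{[2]}$-type one transports this along a path in the connected deformation space: the BBF form and $H^4(\,\cdot\,,\Z)$ form monodromy-stable local systems, so $q_\cdot^\vee$ is a flat rational section which is integral at $S^{[2]}$, hence integral everywhere, and it is a $(2,2)$-class on every fibre by the first part.

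The step I expect to be the main obstacle is the proportionality $c_2(X) \propto q_X^\vee$, i.e.\ showing that $(\alpha,\beta) \mapsto \int_X c_2(X)\,\alpha\beta$ is a multiple of the BBF form; this rests on the description of the monodromy of $K3^{[2]}$-type (so that $H^2(X,\Q)$ is irreducible with a one-dimensional space of invariant symmetric bilinear forms) together with the deformation-invariance of $c_2(X)$. An alternative is a Mumford--Tate argument at a very general — necessarily non-projective — point of the period domain, where by Zarhin's theorem the space of Hodge classes in $\Sym^2 H^2$ is spanned by $q_X^\vee$; one then propagates $c_2 = \lambda q_X^\vee$ to $S^{[2]}$ by deformation. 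The remaining ingredients — the non-degeneracy of $\langle\,\cdot\,,\,\cdot\,\rangle$ from Proposition \ref{prop O'Grady int q_Xvee}, the Riemann--Roch bookkeeping, and the lattice computation — are routine.
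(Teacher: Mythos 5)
Your proposal is correct in outline, but note that the paper offers no proof to compare against: Proposition \ref{prop O'Grady c_2(X)} is quoted from O'Grady \cite[\S 3]{o2008irreducible}, so the relevant benchmark is his original argument. Your strategy is close to it in spirit: reduce to the model $S^{[2]}$ by deformation invariance of $c_2$ and of $q_X^{\vee}$, use the non-degeneracy of $\langle\,\cdot\,,\,\cdot\,\rangle$ and the identity $\langle q_X^{\vee},\alpha\beta\rangle=25(\alpha,\beta)$ of Proposition \ref{prop O'Grady int q_Xvee} to turn $\int_X c_2(X)\alpha\beta=30(\alpha,\beta)$ into $c_2(X)=\frac{6}{5}q_X^{\vee}$, and get the integrality of $\frac{2}{5}q_X^{\vee}$ from the splitting $H^2(S^{[2]},\Z)\cong H^2(S,\Z)\oplus\langle-2\rangle$ with $H^2(S,\Z)$ unimodular. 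Where you genuinely differ is in how the constant $30$ is produced: rather than invoking the known Chern-number identities on the Hilbert square, you first argue that $(\alpha,\beta)\mapsto\int_X c_2(X)\alpha\beta$ is monodromy-invariant and hence proportional to the BBF form --- this rests on the description of the monodromy group of $K3^{[2]}$-type (Markman: finite index in the isometry group, hence Zariski dense), or on your Mumford--Tate/Zarhin alternative at a very general period point, and you rightly flag this as the one nontrivial input --- and you then fix the constant by a single Euler-characteristic computation, $\chi(S^{[2]},\mathcal L_2)=\binom{\chi(S,\mathcal L)+1}{2}$ via the Hilbert--Chow morphism ($S^{(2)}$ has rational singularities). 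The bookkeeping does close up, provided you make the Todd term explicit: with $c_1=0$ one has $\chi(S^{[2]},L)=\frac{1}{24}\int\alpha^4+\frac{1}{24}\int\alpha^2c_2+\chi(\Ol_{S^{[2]}})$ with $\chi(\Ol_{S^{[2]}})=3$, which together with $\int\alpha^4=3\,q_X(\alpha)^2$ gives $\int\alpha^2c_2=30\,q_X(\alpha)$, consistent, for instance, with the entry $\langle h^2,\frac{2}{5}q_X^{\vee}\rangle=20t$ of the Gram matrix in Theorem \ref{thm basis H2,2(S2, Z)}. Finally $2c_2(X)-2q_X^{\vee}=\frac{2}{5}q_X^{\vee}$ gives the integrality on $S^{[2]}$, and your parallel-transport argument carries it, together with the type $(2,2)$ statement, to every manifold of $K3^{[2]}$-type; so the proof is sound, trading O'Grady's direct computation on $S^{[2]}$ for a monodromy reduction plus one Riemann--Roch evaluation.
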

If $S$ is a projective K3 surface, then $H^{2,2}(S^{[2]}, \Z)$ is a lattice, where the bilinear form considered is the cup product. We recall \cite[Theorem 8.3]{novario2021hodge}, which gives a basis of the lattice $H^{2,2}(S^{[2]}, \Z)$ when $S$ is general and the Picard group of $S$ is known.
\begin{thm}[N.] \label{thm basis H2,2(S2, Z)}
	Let $S$ be a general projective K3 surface, $\{b_1, \dots, b_r\}$ be a basis of $\Pic(S)$ and $X:=S^{[2]}$.  Then:
	\begin{enumerate}[label=(\roman*)]
		\item $\textnormal{rk}(H^{2,2}(X, \Z))=\frac{(r+1)r}{2}+r+2$.
		\item The following is a basis of the lattice $H^{2,2}(X, \Z)$, which is odd:
		\begin{equation*}
			\left\{b_ib_j, \frac{b_i^2-b_i\delta}{2}, \frac{1}{8}\left( \delta^2+\frac{2}{5}q_X^{\vee}\right), \delta^2\right\}_{1 \le i \le j \le r.}
		\end{equation*}
	In particular, if $S=S_{2t}$ is a generic K3 surface of degree $2t$, and $h \in \Pic(S^{[2]}_{2t})$ is the class induced by the ample generator of $\Pic(S_{2t})$, then
	\begin{equation*}
		H^{2,2}(S^{[2]}_{2t}, \Z)=\Z h^2 \oplus \Z \frac{h^2-h\delta}{2} \oplus \Z \frac{1}{8} \left( \delta^2 + \frac{2}{5}q_X^{\vee}\right) \oplus \Z \frac{2}{5}q_X^{\vee}.
	\end{equation*}
Moreover, $\textnormal{disc}(H^{2,2}(S^{[2]}_{2t}, \Z))=84t^3$ and the Gram matrix in the basis given above is the following:
\begin{equation*}
	\begin{pmatrix}
		12t^2 & 6t^2 & 2t & 20t \\
		6t^2 & t(3t-1) & t & 10t \\
		2t & t & 1 & 9 \\
		20t & 10t & 9 & 92
	\end{pmatrix}.
\end{equation*}
	\end{enumerate}
\end{thm}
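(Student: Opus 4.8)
\emph{Proof sketch.} The statement divides naturally into the rank formula (i), the production of an explicit $\Z$-basis (ii), and, as a consequence of (ii) together with O'Grady's intersection formulas, the explicit Gram matrix and discriminant in the rank-one case. The plan is to first describe $H^{2,2}(X,\Q)$ as a rational vector space and then to determine its integral refinement $H^{2,2}(X,\Z)=H^4(X,\Z)\cap H^{2,2}(X,\Q)$, the latter being the real content.

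For the rational picture, Proposition \ref{prop Guan} identifies the cup product $\Sym^2 H^2(X,\Q)\xrightarrow{\ \sim\ }H^4(X,\Q)$, so it suffices to find the classes of type $(2,2)$ in $\Sym^2 H^2(X,\Q)$. Since $H^2(X,\Z)=i(H^2(S,\Z))\oplus\Z\delta$ as Hodge structures, we get a decomposition of rational Hodge structures $H^2(X,\Q)=\NS(X)_\Q\oplus T(X)_\Q$ with $\NS(X)_\Q=i(\Pic(S))_\Q\oplus\Q\delta$ of rank $r+1$ and $T(X)_\Q\cong T(S)_\Q$; hence
\[
\Sym^2 H^2(X,\Q)=\Sym^2\NS(X)_\Q\ \oplus\ \big(\NS(X)_\Q\otimes T(X)_\Q\big)\ \oplus\ \Sym^2 T(X)_\Q .
\]
All of $\Sym^2\NS(X)_\Q$ is of type $(2,2)$; because $S$ is general, $T(X)_\Q$ is irreducible, carries no nonzero rational $(1,1)$-class and has $\mathrm{End}_{\mathrm{HS}}(T(X)_\Q)=\Q$, so a short Hodge-theoretic argument shows that there are no $(2,2)$-classes in $\NS(X)_\Q\otimes T(X)_\Q$ and exactly a one-dimensional space of them in $\Sym^2 T(X)_\Q$ (the one spanned by the image of the identity under $\mathrm{Hom}(T(X)_\Q,T(X)_\Q)\cong\Sym^2 T(X)_\Q$, the last isomorphism via the polarization). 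This gives $\dim_\Q H^{2,2}(X,\Q)=\binom{r+2}{2}+1=\tfrac{(r+1)r}{2}+r+2$, which is (i). Moreover, writing $q_X^{\vee}=\sum m_{ij}e_ie_j$ with $(m_{ij})$ the inverse Gram matrix of $H^2(X,\Q)$ — block-diagonal for the BBF-orthogonal decomposition $H^2(X,\Q)=\NS(X)_\Q\oplus T(X)_\Q$ — the component of $q_X^{\vee}$ in $\Sym^2 T(X)_\Q$ is the $(\textnormal{inverse})$ polarization of $T(X)_\Q$, hence nonzero; therefore $H^{2,2}(X,\Q)=\Sym^2\NS(X)_\Q+\Q\,q_X^{\vee}$ and $\{b_ib_j\ (i\le j),\,b_i\delta,\,\delta^2,\,q_X^{\vee}\}$ is a $\Q$-basis.

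It remains to pin down the integral structure. First I would check that the proposed generators lie in $H^4(X,\Z)$: the classes $b_ib_j$ and $\delta^2$ are products of integral classes, while the integrality of $\tfrac12(b_i^2-b_i\delta)$ and of $\tfrac18(\delta^2+\tfrac25 q_X^{\vee})=\tfrac18\big(\delta^2+\tfrac13 c_2(X)\big)$ requires a concrete computation, carried out most transparently on the blow-up $\widetilde{S\times S}\to S\times S$ of the diagonal, of which $S^{[2]}$ is the $\Z/2$-quotient: one uses the Chern classes of the tautological bundles, the self-intersection of the exceptional divisor, and O'Grady's Propositions \ref{prop O'Grady int q_Xvee} and \ref{prop O'Grady c_2(X)}; geometrically $\tfrac12(b_i^2-b_i\delta)$ represents, up to sign and a correction term, the surface of length-two subschemes contained in a curve of class $b_i$. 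Writing $L_0$ for the lattice generated by these classes, one has $L_0\otimes\Q=H^{2,2}(X,\Q)$, so the point is the saturation $[H^{2,2}(X,\Z):L_0]=1$. For this the key input is that $H^*(S^{[2]},\Z)$ is torsion free (recalled above), so Poincaré duality makes $(H^4(X,\Z),\langle\,\cdot\,,\,\cdot\,\rangle)$ unimodular and $H^{2,2}(X,\Z)$ a \emph{primitive} sublattice of it. Combining the general divisibility $[H^{2,2}(X,\Z):L_0]^2\mid \textnormal{disc}(L_0)$ with this primitivity and with a case analysis of the denominators that a class which is simultaneously integral and of type $(2,2)$ can carry — pairing it against the integral test classes $b_k\delta$, $\delta^2$, $\tfrac25 q_X^{\vee}$ and using the decomposition of the previous paragraph — one forces $L_0=H^{2,2}(X,\Z)$, which proves (ii). Finally, the ``in particular'' statement is pure computation: set $r=1$, $b_1=h$, $(h,h)=2t$, $(h,\delta)=0$, $(\delta,\delta)=q_X(\delta)=-2$ and evaluate the sixteen entries of the Gram matrix in the basis $h^2,\ \tfrac12(h^2-h\delta),\ \tfrac18(\delta^2+\tfrac25 q_X^{\vee}),\ \tfrac25 q_X^{\vee}$ using Propositions \ref{prop rmk 2.1 OG} and \ref{prop O'Grady int q_Xvee} (the $(3,3)$ entry comes out $1$, so the lattice is odd); expanding the resulting determinant gives $\textnormal{disc}=84t^3$.

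The rational part and the final matrix are routine. The genuine difficulty is entirely on the integral side, and concentrated in two places. First, proving that $\tfrac12(b_i^2-b_i\delta)$ and especially $\tfrac18(\delta^2+\tfrac25 q_X^{\vee})$ are integral is not a formal manipulation: it needs honest intersection theory on $S^{[2]}$ (Chern classes of tautological sheaves, the normal bundle of the exceptional divisor, the blow-up formula for total Chern classes), and any normalization or sign slip there contaminates everything downstream. Second, the saturation step is the delicate one: one must exclude every a priori possible denominator, which is where the unimodularity of $H^4(X,\Z)$ and the rigidity of the transcendental side (via O'Grady's value $\langle q_X^{\vee},q_X^{\vee}\rangle=23\cdot25$) have to be exploited carefully to close the argument.
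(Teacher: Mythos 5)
A preliminary remark on the comparison itself: the present paper does not prove Theorem \ref{thm basis H2,2(S2, Z)} at all --- it is recalled from \cite[Theorem 8.3]{novario2021hodge}, whose proof rests on an explicit description of the integral lattice $H^4(S^{[2]},\Z)$ rather than on the indirect pairing arguments you propose; so strictly speaking there is no in-paper proof to compare with, and your sketch has to be judged on its own. Its rational part is fine: the decomposition $\Sym^2 H^2(X,\Q)=\Sym^2\NS(X)_\Q\oplus(\NS(X)_\Q\otimes T(X)_\Q)\oplus\Sym^2 T(X)_\Q$, the use of $E_S\cong\Q$ to produce exactly one transcendental $(2,2)$-line, its identification with the $q_X^{\vee}$-direction, and the Gram matrix and discriminant computation from Propositions \ref{prop rmk 2.1 OG} and \ref{prop O'Grady int q_Xvee} are all correct and routine, and they give (i) together with the $\Q$-basis.

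The two integral steps, however, which you rightly identify as the real content, are not closed by the tools you name, and in one case they cannot be. For the integrality of $\tfrac12(b_i^2-b_i\delta)$ and $\tfrac18\bigl(\delta^2+\tfrac25 q_X^{\vee}\bigr)$, checking on the blow-up $\widetilde{S\times S}$ is one-directional: if $\pi\colon\widetilde{S\times S}\to S^{[2]}$ is the degree-$2$ cover, integrality of $\pi^*v$ does not imply integrality of $v$, and since the denominators to be certified are $2$ and $8$ the pullback loses exactly the $2$-primary information at stake --- for instance $\pi^*\bigl(\tfrac12 h\delta\bigr)=\tfrac12(h_1+h_2)\cdot[E']$ is integral upstairs because $(h_1+h_2)$ restricts to $2h$ on the exceptional divisor, even though $\tfrac12 h\delta$ is \emph{not} integral on $S^{[2]}$ (otherwise the discriminant would be $21t^3$, not $84t^3$). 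The same class shows that your saturation argument does not close: with $\textnormal{div}(\delta)=2$ and $(h,\delta)=0$, Proposition \ref{prop rmk 2.1 OG} gives $\langle h\delta,\alpha\beta\rangle\in 2\Z$ for all $\alpha,\beta\in H^2(X,\Z)$, and one computes $\langle\tfrac12 h\delta,\tfrac25 q_X^{\vee}\rangle=0$, $\langle\tfrac12 h\delta,\tfrac12(h^2-h\delta)\rangle=t$, $\langle\tfrac12 h\delta,\tfrac18(\delta^2+\tfrac25 q_X^{\vee})\rangle=0$, $\langle\tfrac12 h\delta,\tfrac12 h\delta\rangle=-t$; moreover $[H^{2,2}(X,\Z):L_0]^2\mid 84t^3$ allows index $2$. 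So the overlattice $L_0+\Z\,\tfrac12 h\delta$ passes every test in your sketch --- pairing against $b_k\delta$, $\delta^2$, $\tfrac25 q_X^{\vee}$, against all of $\Sym^2H^2(X,\Z)$, unimodularity of $H^4(X,\Z)$, primitivity of $H^{2,2}(X,\Z)$, and the divisibility bound --- yet it must be excluded for the theorem to hold. Ruling it out requires integral classes of $H^4(X,\Z)$ that do not lie in $\Sym^2H^2(X,\Z)$, i.e.\ precisely the kind of explicit integral basis of $H^4(S^{[2]},\Z)$ that \cite{novario2021hodge} uses; unimodularity and the transcendental rigidity $\langle q_X^{\vee},q_X^{\vee}\rangle=23\cdot25$ do not determine the saturation. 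Your outline locates the difficulty correctly but does not supply the input needed to resolve it.
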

Note that in the case of the Hilbert square of a generic K3 surface $S_{2t}$ we have substituted the element $\delta^2$ in a basis of $H^{2,2}(X, \Z)$ with $\frac{2}{5}q_X^{\vee}$: this choice slightly simplifies computations in Section \ref{Section irreducibility property}.
\section{Generalities on Hilbert squares of generic K3 surfaces} \label{section generalities HS general K3 surfaces}
In this section we recall some important results on Hilbert squares of generic K3 surfaces of degree $2t$ and we present the main problem of this paper. Recall that the \emph{nef cone} of a projective variety $Y$ is the convex cone $\Nef(Y) \subseteq \NS(Y)_{\R}$ generated by classes of nef divisors in $\NS(Y)_{\R} = \NS(Y) \otimes \R$, where $\NS(Y)$ is the Néron--Severi group of $Y$, the \emph{moving cone} is the convex cone $\Mov(Y) \subseteq \NS(Y)_{\R}$ generated by classes of divisors whose complete linear system has base locus of codimension greater than $1$, and the \emph{pseudoeffective cone} is the convex cone $\overline{\text{Eff}}(Y) \subseteq \NS(Y)_{\R}$ generated by classes of pseudoeffective divisors. We present the following version of the Bayer--Macrì theorem, see \cite[Proposition 13.1]{bayer2014mmp}, describes the closure of the moving cone, the nef cone and the pseudoeffective cone of the Hilbert square of a generic K3 surface.
\begin{thm}[Bayer--Macrì] \label{thm Bayer Macri}
Let $S_{2t}$ be a projective K3 surface with $\Pic(S_{2t})=\Z H$ and $H^2=2t$. Let $X:=S^{[2]}_{2t}$ be its Hilbert square. Denote by $h \in \Pic(X)$ the line bundle induced by the ample generator of $\Pic(S_{2t})$. Then the cones of classes of divisors can be described as follows.
\begin{enumerate}
	\item The extremal rays of the closure of the moving cone $\Mov(X)$ are spanned by $h$ and $h-\mu_t\delta$, where:
	\begin{itemize}
		\item if $t$ is a perfect square, $\mu_t=\sqrt{t}$;
		\item if $t$ is not a perfect square and $(c, d)$ is the minimal solution of the Pell equation $P_t(1)$, then $\mu_t=t \cdot \frac{d}{c}$.
	\end{itemize}
	\item The extremal rays of the nef cone $\Nef(X)$ are spanned by $h$ and $h-\nu_t\delta$, where:
	\begin{itemize}
		\item if the equation $P_{4t}(5)$ is not solvable, $\nu_t=\mu_t$;
		\item if the equation $P_{4t}(5)$ is solvable and $(a_5, b_5)$ is its minimal solution, $\nu_t=2t \cdot \frac{b_5}{a_5}$.
	\end{itemize}
	\item The extremal rays of the pseudoeffective cone $\overline{\textnormal{Eff}}(X)$ are spanned by $\delta$ and $h-\omega_t\delta$, where:
	\begin{itemize}
		\item if $t$ is a perfect square, $\omega_t=\sqrt{t}$;
		\item if $t$ is not a perfect square, $\omega_t=\frac{c}{d}$, where $(c, d)$ is the minimal solution of $P_t(1)$.
	\end{itemize}
\end{enumerate} 
\end{thm}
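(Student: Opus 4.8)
The plan is to deduce the statement from the Bayer--Macr\`i description of the birational geometry of moduli spaces of Bridgeland-stable objects on a K3 surface; in fact \cite[Proposition 13.1]{bayer2014mmp} \emph{is} this statement, specialised to Hilbert squares, and I would follow its proof. Since $\Pic(S_{2t})=\Z H$ with $H^2=2t$, the N\'eron--Severi lattice is $\NS(X)=\Z h\oplus\Z\delta$ with $q_X$ of Gram matrix $\mathrm{diag}(2t,-2)$, so all three cones live in the plane $\NS(X)_{\R}$ and for each it suffices to pin down one extremal ray of the form $h-x\delta$, the ray $\R_{\ge 0}\delta$ being the only other candidate. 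Two extremal rays are elementary: $h=\pi^{*}\bar h$ is the pullback of an ample class $\bar h$ along the Hilbert--Chow morphism $\pi\colon X\to S^{(2)}_{2t}$, so $h$ is nef, big and (being globally generated, with $H^0(X,h)=\Sym^2H^0(S_{2t},H)$) movable, but not ample, since it is trivial on the $\Pn^1$-fibres of $\pi$ over the diagonal; hence $h$ spans an extremal ray of $\Nef(X)$ and of $\overline{\Mov}(X)$. The exceptional divisor $E$ of $\pi$ satisfies $[E]=2\delta$ and $q_X([E])=-8<0$, so it is an irreducible effective divisor of negative BBF-square, hence rigid; thus $\delta$ spans an extremal ray of $\overline{\textnormal{Eff}}(X)$ and, its base locus having codimension one, $\delta\notin\overline{\Mov}(X)$.

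For the opposite ray of each cone I would realise $X\cong M_H(v)$ as a moduli space of Gieseker-stable sheaves with Mukai vector $v=(1,0,-1)$, $v^2=2$, on $S_{2t}$, and use the Mukai isometry to identify $(\NS(X),q_X)$ with the rank-two lattice $v^{\perp}\subset\widetilde H(S_{2t},\Z)_{\mathrm{alg}}$, which here equals $\Z(0,H,0)\oplus\Z(1,0,1)$, matching $h$ with $\theta_v(0,H,0)$ and $\delta$ with $\theta_v(1,0,1)$. By Bayer--Macr\`i, $\overline{\Mov}(X)$ (resp.\ $\Nef(X)$) is cut out from the positive cone by the walls of the divisorial contractions (resp.\ of the divisorial contractions and of the flops) of $X$, and for this $v$ each such wall is the orthogonal complement of a primitive $\rho\in\NS(X)$ with either $q_X(\rho)=-2$, coming from a spherical Mukai vector, or $q_X(\rho)=-10$ and $\textnormal{div}(\rho)=2$; moreover $\overline{\textnormal{Eff}}(X)$ is spanned by $\delta$ together with the prime exceptional divisor realising the divisorial wall of $\overline{\Mov}(X)$ opposite to $h$. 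Writing $\rho=dh+e\delta$ primitive one computes $q_X(\rho)=2(td^2-e^2)$ and $\textnormal{div}(\rho)=\gcd(d,2)$, so $q_X(\rho)=-2$ is the Pell equation $P_t(1)$ in $(e,d)$, while $q_X(\rho)=-10$ forces $d=2d'$ even and becomes $e^2-4td'^2=5$, i.e.\ $P_{4t}(5)$ in $(e,d')$.

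The remaining bookkeeping uses Section~\ref{Section Pell}. If $t$ is a perfect square, $P_t(1)$ has only the solution $(e,d)=(\pm1,0)$, so $\rho=\delta$ and the only wall arising from a $(-2)$-class is $\R h$; the wall opposite to $h$ then degenerates to the isotropic ray $h-\sqrt t\,\delta$ on the boundary of the positive cone, which induces a Lagrangian fibration by the hyperk\"ahler SYZ property for Hilbert squares of K3 surfaces, and $\mu_t=\nu_t=\omega_t=\sqrt t$. If $t$ is not a perfect square and $(c,d)$ is the minimal solution of $P_t(1)$, the spherical class $\rho=dh-c\delta$ gives the divisorial wall $\rho^{\perp}=\R(h-\tfrac{td}{c}\delta)$ nearest to $h$ --- other solutions of $P_t(1)$ give walls further from $h$, bounding nef cones of other birational models --- so $\mu_t=td/c$, while the prime exceptional divisor $\rho=dh-c\delta$ lies on the ray $h-\tfrac{c}{d}\delta$, so $\omega_t=c/d$. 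If $P_{4t}(5)$ has no solution there is no flop interior to $\overline{\Mov}(X)$, hence $\Nef(X)=\overline{\Mov}(X)$ and $\nu_t=\mu_t$; if $(a_5,b_5)$ is its minimal solution, the class $\rho=2b_5h-a_5\delta$ gives the flopping wall $\rho^{\perp}=\R(h-\tfrac{2tb_5}{a_5}\delta)$ nearest to $h$, so $\nu_t=2tb_5/a_5$, necessarily with $\nu_t\le\mu_t$ since $\Nef(X)\subseteq\overline{\Mov}(X)$. Collecting these identifications yields the three descriptions.

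The hard part is the middle step: for $v=(1,0,-1)$ on a generic $S_{2t}$, pinning down exactly which classes $\rho$ produce walls and whether each such wall is a divisorial contraction --- hence a boundary of $\overline{\Mov}(X)$ --- or a flop --- hence interior to $\overline{\Mov}(X)$ and bounding $\Nef(X)$. For a moduli space of sheaves this is precisely the content of the Bayer--Macr\`i analysis, equivalently of the monodromy-theoretic classification of MBM classes on IHS manifolds of $K3^{[2]}$-type; granting it, the lattice identification and the reduction to $P_t(1)$ and $P_{4t}(5)$ are routine, and in practice one simply invokes \cite[Proposition 13.1]{bayer2014mmp}.
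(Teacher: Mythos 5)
Your proposal is correct and takes essentially the same route as the paper: the paper offers no proof of this statement, presenting it as a cited specialisation of \cite[Proposition 13.1]{bayer2014mmp}, which is precisely the result you reduce to (your sketch of the moduli-space realisation, wall classification and Pell-equation bookkeeping is a faithful account of what that reference provides). Nothing further is needed.
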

Let $X$ be as in Theorem \ref{thm Bayer Macri}. We recall the description of the group of biregular automorphisms $\Aut(X)$ given by Boissière, Cattaneo, Nieper-Wi{\ss}kirchen and Sarti in \cite{boissiere2016automorphism}.
\begin{thm}[Proposition 4.3, Proposition 5.1, Lemma 5.3, Theorem 5.5 in \cite{boissiere2016automorphism}] \label{thm BCNS Aut(X)}
Let $S_{2t}$ be a projective K3 surface with $\Pic(S_{2t})= \Z H$ and $H^2=2t$. Let $h \in \Pic(S^{[2]}_{2t})$ be the line bundle induced by the ample generator of the Picard group $\Pic(S_{2t})$.
\begin{enumerate}
	\item If $t=1$, then $S_2$ is the double cover of $\Pn^2$ branched along a smooth sextic curve and if $\iota$ is the covering involution then
	\begin{equation*}
		\Aut(S^{[2]}_2)=\{\textnormal{id}_{S^{[2]}_2}, \iota^{[2]}\} \cong \Z/2\Z.
	\end{equation*}
	\item If $t \ge 2$, then $X:=S^{[2]}_{2t}$ admits a non-trivial automorphism if and only if one of the following equivalent conditions is satisfied.
	\begin{itemize}
		\item The integer $t$ is not a square, the Pell-type equation $P_{4t}(5)$ has no solution and the negative Pell equation $P_t(-1)$ has a solution.
		\item There exists an ample class $D \in \NS(S^{[2]}_{2t})$ such that $q_X(D)=2$.
	\end{itemize}
	In this case the class $D$ is unique, the automorphism $\iota$ is unique and it is a non-natural anti-symplectic involution. Its action on $\NS(S^{[2]}_{2t})$ is the reflection in the span of the class $D$ of square $2$ represented in the basis $\{h, -\delta\}$ by the matrix
	\begin{equation} \label{eq matrix action iota}
		\begin{pmatrix}
			c & -d \\
			td & -c
		\end{pmatrix},
	\end{equation}
i.e., $\iota^*(xh-y\delta)=(cx-dy)h-(tdx-cy)\delta$, where $(c, d)$ is the minimal solution of the Pell equation $P_t(1)$.
\end{enumerate}
\end{thm}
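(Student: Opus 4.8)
This is a structural description of $\Aut(X)$ cited from \cite{boissiere2016automorphism}, and the plan is to recover it by combining the global Torelli theorem for IHS manifolds with the cone descriptions of Theorem \ref{thm Bayer Macri}. First I would invoke the Hodge-theoretic global Torelli theorem for manifolds of $K3^{[2]}$-type (Verbitsky, in Markman's refined form): the assignment $f\mapsto f^{*}$ identifies $\Aut(X)$ with the group of $g\in\textnormal{Mon}^{2}(X)$ that are Hodge isometries of $(H^{2}(X,\Z),q_{X})$ and satisfy $g(\Nef(X))=\Nef(X)$, where for the $K3^{[2]}$-type the monodromy group $\textnormal{Mon}^{2}(X)$ is Markman's index-two subgroup $O^{+}(H^{2}(X,\Z))$ of isometries preserving the orientation of a positive-definite $3$-plane. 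Since $S_{2t}$ is general in its rank, the transcendental lattice $T(X)\cong T(S_{2t})$ has Hodge endomorphism algebra $\Q$, so every Hodge isometry of $H^{2}(X,\Z)$ restricts to $\pm\id$ on $T(X)$; hence such a $g$ is determined by its restriction $g_{\NS}$ to $\NS(X)=\Z h\oplus\Z\delta\cong\langle 2t\rangle\oplus\langle -2\rangle$ together with the sign on $T(X)$, the two being linked by the gluing condition over the discriminant group $A_{\NS(X)}$. An anti-symplectic $g$ is exactly one with $g|_{T(X)}=-\id$.

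Next I would determine which $g_{\NS}$ preserve the nef cone. By Theorem \ref{thm Bayer Macri} the cone $\Nef(X)$ has two extremal rays, one being $\R_{\geq 0}h$ with $q_{X}(h)=2t$; a nontrivial isometry of the rank-two lattice $\NS(X)$ that preserves this cone must interchange its two rays, so the primitive generator of the second ray must again have $q_{X}$-square $2t$. Running through the three cases of Theorem \ref{thm Bayer Macri}: if $t$ is a square the second nef ray is generated by $h-\sqrt{t}\,\delta$, of square $0$; if $P_{4t}(5)$ is solvable it is generated by a class of square $2t\cdot 5=10t$ (its primitive generator having square $10t$ or $\tfrac{2t}{5}$), in any case not $2t$; and only when $t$ is not a square and $P_{4t}(5)$ has no solution do $\Nef(X)$ and $\Mov(X)$ coincide, the second ray then being spanned by the primitive class $ch-td\delta$ (with $(c,d)$ the minimal solution of $P_{t}(1)$), of square $2t(c^{2}-td^{2})=2t$. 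In that last case the unique orientation-reversing involution of $\NS(X)$ realizing the swap is the reflection fixing the line spanned by $D:=bh-a\delta$ with $D^{2}=2$, and a direct computation shows its matrix in the basis $\{h,-\delta\}$ is the one in \eqref{eq matrix action iota}.

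Then I would settle the gluing and the arithmetic. The isometry $g_{\NS}\oplus(-\id_{T(X)})$ extends to an integral isometry of $H^{2}(X,\Z)$ precisely when $g_{\NS}$ acts as $-\id$ on the glue subgroup $H\cong\Z/2t\Z$ of $A_{\NS(X)}$; evaluating the reflection on $\tfrac{1}{2t}h$ converts this into the conditions $c\equiv -1\pmod{2t}$ and $d$ even, which by an elementary manipulation of $c^{2}-td^{2}=1$ together with Remark \ref{rmk Pell (a, b) and (c, d)} are equivalent to the solvability of $P_{t}(-1)$ (its minimal solution then being $(a,b)$). The competing symplectic alternative $g_{\NS}\oplus\id_{T(X)}$ would force $c\equiv 1\pmod{2t}$ and $d$ even, contradicting minimality of $(c,d)$ unless $g_{\NS}=\id$; and a short orientation check (on a positive $3$-plane spanned by $\mathrm{Re}\,\sigma_{X}$, $\mathrm{Im}\,\sigma_{X}$ and a K\"ahler class the isometry acts by $\mathrm{diag}(-1,-1,+)$, hence lies in $O^{+}$) places $g$ in $\textnormal{Mon}^{2}(X)$. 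Since the reflection permutes the extremal rays of $\Nef(X)$ it preserves $\Nef(X)$, so by global Torelli $g=\iota^{*}$ for a unique nontrivial automorphism $\iota$, an anti-symplectic involution with the stated $\NS$-action. For uniqueness of $D$ one checks $0<a/b<td/c$ (which reduces to $a>0$), so $D$ lies in the interior of $\Nef(X)$ and is ample while no other solution of $P_{t}(-1)$ yields a class inside $\Nef(X)$; conversely an ample $xh-y\delta$ of square $2$ makes $(y,x)$ a solution of $P_{t}(-1)$ and, by the shape of $\Nef(X)$, forces $t$ non-square and $P_{4t}(5)$ unsolvable, which is the claimed equivalence. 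Finally, when $t=1$ the lattice $\langle 2\rangle\oplus\langle -2\rangle$ has $2$-torsion discriminant group, so the gluing obstruction vanishes and the anti-symplectic isometry is realized by the natural automorphism $\iota^{[2]}$ induced by the covering involution $\iota$ of the double plane $S_{2}\to\Pn^{2}$; the same analysis shows nothing else occurs, giving $\Aut(S_{2}^{[2]})\cong\Z/2\Z$. Non-naturality of $\iota$ for $t\geq 2$ is then immediate, since a generic K3 surface of degree $2t\geq 4$ has trivial automorphism group.

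The crux is the gluing step: turning the discriminant-form compatibility into the sharp equivalence with the solvability of $P_{t}(-1)$ in both directions, while keeping simultaneous control of the orientation (monodromy) constraint so that the candidate Hodge isometry genuinely descends to a biregular automorphism. Everything else is bookkeeping once the cone data of Theorem \ref{thm Bayer Macri} and the Torelli dictionary are available.
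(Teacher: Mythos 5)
The paper itself gives no proof of Theorem \ref{thm BCNS Aut(X)}: it is quoted verbatim from \cite{boissiere2016automorphism}. Your outline reconstructs essentially the strategy of that reference: Markman's Hodge-theoretic Torelli theorem together with faithfulness of $\Aut(X)$ on $H^2(X,\Z)$, reduction of the action on the transcendental lattice to $\pm\id$, the cone description of Theorem \ref{thm Bayer Macri} showing that the only nontrivial cone-preserving isometry of $\NS(X)$ is the ray-swapping reflection with matrix (\ref{eq matrix action iota}), and the gluing computation on the class $h/2t$ giving the conditions $c\equiv -1\pmod{2t}$, $d$ even, which (via $k(tk-1)=e^2$ with coprime factors, and Remark \ref{rmk Pell (a, b) and (c, d)}) is indeed equivalent to solvability of $P_t(-1)$; the exclusion of the symplectic gluing by minimality of $(c,d)$, the orientation check (best done with the fixed ample class $D$ itself), and the $t=1$ case are all sound as sketched.

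There is, however, one genuine gap: the converse implication ``an ample class of square $2$ exists $\Rightarrow$ $P_{4t}(5)$ has no solution'' is not, as you assert, automatic ``by the shape of $\Nef(X)$''; it is precisely the nontrivial arithmetic content of the cited Lemma 5.3/Proposition 5.1 of \cite{boissiere2016automorphism}. The two Pell conditions are not mutually exclusive: for $t=5$ one has $2^2-5\cdot 1^2=-1$ and $5^2-20\cdot 1^2=5$, so both $P_t(-1)$ and $P_{4t}(5)$ are solvable; there $\nu_5=2$ and the square-$2$ class $h-2\delta$ lies exactly on the boundary of $\Nef(X)$, nef but not ample (similarly for $t=29$, where $11^2-116=5$ and the square-$2$ class $13h-70\delta$ falls strictly outside the nef cone). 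So your argument must actually prove that whenever $P_{4t}(5)$ is solvable with minimal solution $(a_5,b_5)$, every solution $(y,x)$ of $P_t(-1)$ satisfies $y/x\ge 2t\,b_5/a_5$; without this comparison the stated equivalence of the two bullet conditions is not established. Two smaller points: your reduction to $\pm\id$ on $T(X)$ invokes $E_S\cong\Q$, which is not among the hypotheses (only $\Pic(S_{2t})=\Z H$); the standard substitute is that the image of $\Aut(X)$ in $O(T(X))$ is finite cyclic of order $m$ with $\varphi(m)$ dividing $\rk\,T(X)=21$, forcing $m\le 2$. And the claim that no other solution of $P_t(-1)$ yields a nef class deserves its one-line check: writing $(a',b')$ for the next solution one finds $a'c-tb'd=a\left(a^2-tb^2\right)^2=a>0$, so $a'/b'>td/c=\mu_t\ge\nu_t$ and the corresponding class is not even movable.
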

Let $X$ be the Hilbert square of a generic K3 surface of degree $2t$, and suppose that $X$ admits an ample class $D \in \Pic(X)$ with $q_X(D)=2$. By Theorem \ref{thm Bayer Macri} the nef cone $\Nef(X)$ and the closure of the moving cone $\Mov(X)$ coincide, and by Theorem \ref{thm BCNS Aut(X)} the action on $\NS(S^{[2]}_{2t})\otimes\R$ of the anti-symplectic involution $\iota$ preserves the nef cone of $S^{[2]}_{2t}$, exchanging the two extremal rays. This is true also for a non-natural non-trivial birational automorphism $\iota \in \Bir(S^{[2]}_{2t})$ on the Hilbert square of a generic K3 surface $S_{2t}$ of degree $2t$, whose action on $\NS(S^{[2]}_{2t})\otimes\R$ preserves the closure of the moving cone exchanging the two extremal rays, see \cite[Lemma 6.22]{markman2011survey}. For a description of $\Bir(S^{[2]}_{2t})$ see \cite[Proposition B.3]{debarre2019period}, for a description of $\Aut(S^{[n]}_{2t})$ for $n \ge 3$ see \cite{cattaneo2019automorphisms} and for a description of $\Bir(S^{[n]}_{2t})$ for $n \ge 3$ see \cite{beri2020birational}.

\bigskip

Let $X=S^{[2]}_{2t}$ be as above: the ample divisor $D$ with $q_X(D)=2$ is unique by Theorem \ref{thm BCNS Aut(X)} and its class in $\Pic(X)$ is $bh-a\delta$, where $h \in \Pic(X)$ is the line bundle induced by the ample generator of $\Pic(S_{2t})$, and $(a, b)$ is the minimal solution of the negative Pell equation $P_t(-1)$. By the Kodaira vanishing theorem and a Riemann--Roch formula for IHS fourfolds of $K3^{[2]}$-type, see \cite[Formula (2.2.7)]{o2010higher}, we have $\Dim(H^0(X, \Ol_X(D)))=6$. Hence the rational map\,\,$\varphi_{|D|}$ induced by the complete linear system $|D|$ has $\Pn^5$ as codomain, i.e., 
\begin{equation*}
	\varphi_{|D|}: X \dashrightarrow \Pn^5.
\end{equation*}
We can now state the main problem of this paper.
\begin{qst*}
	Let $X$ be the Hilbert square of a generic K3 surface of degree $2t$ such that $X$ admits an ample divisor $D$ with $q_X(D)=2$. Describe the base locus of the complete linear system $|D|$ and describe the rational map 
	\begin{equation*}
		\varphi_{|D|}: X \dashrightarrow \Pn^5.
	\end{equation*}
\end{qst*}
As observed in \cite[$\S 1$]{boissiere2016automorphism}, the first values of $t$ such that $X=S^{[2]}_{2t}$ as in Theorem \ref{thm BCNS Aut(X)} has an ample divisor\,\,$D$ with $q_X(D)=2$ are $t=2, 10, 13, \text{etc}$. The case $t=2$ was studied by Welters in \cite{welters1981abel} and Beauville in \cite{beauville1983some}: in this case $S_4 \subset \Pn^3$ is a smooth quartic surface of $\Pn^3$ with Picard rank $1$, in particular $S_4$ does not contain any line, the map $\varphi_{|D|}$ is a finite morphism of degree\,\,$6$ with image $\mathbb{G}(1, \Pn^3) \subset \Pn^5$, the Grassmannian of lines in $\Pn^3$, and $\varphi_{|D|}(x):=l_x$ is the unique line in $\Pn^3$ which contains the support of the subscheme $x$ of $S_4$. Moreover, if $l_x \cap S_4=x \cup x^{\prime}$, where $x, x^{\prime} \in S_4^{[2]}$, then the anti-symplectic involution $\iota \in \Aut(X)$ is the \emph{Beauville involution}, defined as $\iota(x):=x^{\prime}$. Since $S_4$ does not contain any line, $\iota$ is everywhere well defined. Note that $\varphi_{|D|}$ factors through the quotient $\pi: S_4^{[2]} \rightarrow S_4^{[2]}/\langle \iota \rangle$ with respect to the Beauville involution.

A first step in the analysis of the base locus of $|D|$ in the Problem above is given by the following result, which is a corollary of \cite[Theorem 4.7]{riess2018base}.
\begin{prop}
	Let $X$ be the Hilbert square of a projective K3 surface of Picard rank $1$. Consider a big and nef divisor $D \in \Div(X)$. Then $|D|$ has no fixed part, i.e., the base locus of $|D|$ has codimension greater than $1$.
\end{prop}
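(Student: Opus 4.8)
The plan is to deduce the statement from Riess's Theorem~4.7 in \cite{riess2018base}, which governs the divisorial part of the base locus of a big and nef linear system on an IHS manifold, exploiting that here $\NS(X)$ has rank two. Write $\NS(X)=\Z h\oplus\Z\delta$ with $q_X(h)=2t$, $q_X(\delta)=-2$ and $(h,\delta)=0$ (where $2t=H^2$), and let $D=bh-a\delta$ be big and nef; by Theorem~\ref{thm Bayer Macri} we then have $b>0$, $0\le a\le\nu_t b$ and $q_X(D)=2tb^2-2a^2>0$. Since $\NS(X)$ has signature $(1,1)$, the $q_X$-orthogonal complement of $D$ inside $\NS(X)_{\R}$ is a single negative line, spanned by $ah-tb\delta$.

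Suppose for contradiction that $|D|$ has a non-trivial fixed part $N$, and let $N'$ be a prime component of $N$. By Riess's Theorem~4.7, together with the fact that $D$ is nef and that $q_X(D,D')\ge 0$ for every pseudoeffective class $D'$ --- which is checked directly on the generators $\delta$ and $h-\omega_t\delta$ of $\overline{\text{Eff}}(X)$ using $\mu_t\omega_t=t$ and $a/b\le\nu_t\le\mu_t$ (Theorem~\ref{thm Bayer Macri}) --- one gets $q_X(D,N')=0$, so $N'$ lies on the negative line $\R(ah-tb\delta)$; in particular this line meets $\overline{\text{Eff}}(X)$. Comparing $ah-tb\delta$ with the extremal rays $\R_{\ge0}\delta$ and $\R_{\ge0}(h-\omega_t\delta)$ of $\overline{\text{Eff}}(X)$ and using $\nu_t\le\mu_t<\omega_t$, one is left with two cases: either (a) $a=0$, i.e.\ $D\in\Z_{>0}h$; or (b) $t$ is not a perfect square, $a/b=\nu_t=\mu_t$, and $N'$ is a positive multiple of the extremal effective class $dh-c\delta$, where $(c,d)$ is the minimal solution of $P_t(1)$; equivalently $D=k(ch-td\delta)$ for some integer $k\ge1$.

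Case (a) is immediate: $|bh|$ is the pull-back, via the Hilbert--Chow morphism $X=S^{[2]}_{2t}\to S^{(2)}_{2t}$, of the complete linear system of an ample line bundle on $S^{(2)}_{2t}$ induced by $|bH|$, and since $S_{2t}$ has Picard rank one the system $|bH|$ is base-point free; hence $|bh|$ is base-point free, contradicting $N\ne0$. In case (b) one may assume $k=1$ (if $k\ge2$ then $|k(ch-td\delta)|$ has no fixed component as soon as $|ch-td\delta|$ has none, because the fixed part of a sum of linear systems is contained in the union of their fixed parts). Now $ch-td\delta$ spans an extremal ray of $\Nef(X)=\overline{\Mov}(X)$ which lies in the interior of $\overline{\text{Eff}}(X)$; since $X$ is smooth with $K_X=0$ and $D$ is big and nef, the Base-Point-Free Theorem shows $D$ is semiample, and the induced contraction $\varphi:X\to Z$ is a small birational contraction ($q_X(D)>0$) onto a normal projective fourfold $Z$ with $\Pic(Z)$ of rank one, generated by an ample line bundle $A$ satisfying $\varphi^{*}A=ch-td\delta=D$. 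Then $|D|=\varphi^{*}|A|$, and a fixed component $F_0$ of $|A|$ would have class in $\Pic(Z)=\Z A$ with $0\le F_0\le A$, so $F_0\sim 0$ or $F_0\sim A$; the latter forces $h^0(Z,A)=1$, contradicting $h^0(Z,A)=h^0(X,D)\ge 2$ (by Kawamata--Viehweg vanishing and the Riemann--Roch formula for IHS fourfolds of $K3^{[2]}$-type, see \cite[Formula~(2.2.7)]{o2010higher}). Hence $F_0=0$ and $|D|$ has no fixed component, again contradicting $N\ne0$.

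The parts needing real care are the precise form of Riess's Theorem~4.7 invoked in the second paragraph, and the input from the birational geometry of IHS manifolds in case (b) --- namely that $D$ is semiample and that the associated contraction is small with rank-one Picard target; everything else is formal manipulation with the rank-two N\'eron--Severi lattice and the cone descriptions of Theorem~\ref{thm Bayer Macri}.
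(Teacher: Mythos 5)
Your argument does not go through as written, and the central problem is the very first step. You assert that for a prime fixed component $N'$ of $|D|$ one gets $q_X(D,N')=0$ ``by Riess's Theorem 4.7 together with nefness''. Nefness of $D$ only gives $(D,N')\ge 0$, and Riess's theorem does not give orthogonality either: its actual statement (the one used in the paper) is that $|D|$ has a fixed part if and only if $D=mL+F$ with $m\ge 2$, $L$ movable, $q_X(L)=0$, $F$ reduced and irreducible with $q_X(F)<0$ and $(L,F)=1$; in that situation $(D,F)=m+q_X(F)$, which is $\ge 0$ by nefness but in general strictly positive. Since the whole case analysis (reducing to $D\in\Z_{>0}h$ or $D$ on the extremal nef ray) rests on $N'$ lying on the line $q_X$-orthogonal to $D$, the proof collapses at this point. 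Ironically, the precise form of Riess's criterion that you needed finishes the problem in one line, which is exactly the paper's proof: in $\Pic(X)=\Z h\oplus\Z\delta$ the BBF form is even, with $(h,\delta)=0$, $q_X(h)=2t$, $q_X(\delta)=-2$, so $(L,F)$ is always even and the required decomposition with $(L,F)=1$ cannot exist.

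Even granting the orthogonality step, case (b) contains a second genuine error. For $D=ch-td\delta$ on the extremal nef ray, the class $dh-c\delta$ spans the extremal ray of $\overline{\textnormal{Eff}}(X)$ (Theorem \ref{thm Bayer Macri}) and satisfies $(ch-td\delta,\,dh-c\delta)=0$; in the cases of main interest (e.g.\ when the non-natural involution $\iota$ exists, so that $D=\iota^*h$ and $dh-c\delta=\iota^*\delta$ is the class of $\frac{1}{2}\iota(E)$ with $E$ the exceptional divisor) this orthogonal class is effective, and the semiample contraction attached to $D$ is the $\iota$-conjugate of the Hilbert--Chow morphism: it is a \emph{divisorial} contraction, not a small one. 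So the claim that the contraction is small is false, and the subsequent argument on the target $Z$ is also shaky independently of that: $Z$ is singular, so controlling fixed components of $|A|$ by saying their classes lie in $\Pic(Z)=\Z A$ conflates Cartier and Weil divisors (one would need the class group, or $\Q$-factoriality of $Z$), and one must also rule out that the contracted divisor itself is a fixed component of $\varphi^*|A|$. Case (a) (pulling back $|bH|$ through the Hilbert--Chow morphism) is fine, but it is only a small part of the problem.
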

\begin{proof}
	By \cite[Theorem 4.7]{riess2018base} the complete linear system has a fixed part if and only if $D=mL+F$, where $m \ge 2$, the class of $L$ is movable with $q_X(L)=0$ and $F$ is a reduced and irreducible divisor with $q_X(F) < 0$ and $(L, F)=1$. Since $\Pic(X)= \Z h \oplus \Z \delta$, where $h$ is the line bundle induced by the ample generator of the Picard group of the underlying K3 surface, and $(h, \delta)=0$, there are no elements $L$ and $F$ in $\Pic(X)$ such that $(L, F)=1$. We conclude that $|D|$ has no fixed part.
\end{proof}
We now describe the set $H^{3,3}(S^{[2]}_{2t}, \Z)$ of integral Hodge classes of type $(3, 3)$ on the Hilbert square of a K3 surface\,\,$S_{2t}$ with $\Pic(S_{2t}) = \Z H$ and $H^2=2t$: this will be useful in the proof of Proposition \ref{prop dim Y=3, deg Y}.
\begin{thm} \label{thm H_2(S2, Z)}
	Let $S$ be a K3 surface and suppose that $\{b_1, \dots , b_r\}$ is a basis of $\Pic(S^{[2]})$. Then a basis of the $\Z$-module $H^{3,3}(S^{[2]}, \Z)$ is given by $\{cl(b_1^{\vee}), \dots , cl(b_r^{\vee})\}$, where $b_i^{\vee} \in H_2(S^{[2]}, \Z)$ is the primitive element associated to $b_i \in H^2(S^{[2]}, \Z)$ by Proposition\,\,$\ref{Prop Hassett Tschinkel}$, and $cl(b_i^{\vee})$ is the inverse of the Poincaré dual of $b_i^{\vee}$. In particular, let $X=S_{2t}^{[2]}$ be the Hilbert square of a projective K3 surface with $\Pic(S_{2t})=\Z H$ and $H^2=2t$, and let $h \in \Pic(X)$ be the class induced by the ample generator of $\Pic(S_{2t})$. Consider $h^{\vee}_6:=cl(h^{\vee})$ and $\delta^{\vee}_6:=cl(\delta^{\vee}) \in H^6(X, \Z)$. Then
		\begin{equation*}
			H^{3, 3}(X, \Z) \cong \Z h^{\vee}_6 \oplus \Z \delta^{\vee}_6.
		\end{equation*}
		Moreover, $h^{\vee}_6=\frac{1}{6t}h^3$ and $\delta^{\vee}_6=\frac{1}{4t}h^2\delta$, and the following equalities hold in $H^{3,3}(X, \Z)$:
		\begin{equation*}
			\delta^3=-\frac{3}{t}h^2\delta, \qquad h\delta^2=-\frac{1}{3t}h^3, \qquad q_X^{\vee}h=\frac{25}{6t}h^3, \qquad q_X^{\vee}\delta=\frac{25}{2t}h^2\delta.
		\end{equation*}
\end{thm}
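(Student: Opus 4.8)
The strategy is to move the computation to degree two: using Poincar\'e duality and the Beauville--Bogomolov--Fujiki (BBF) form I would identify $H^6(X,\Q)$ with a copy of $H^2(X,\Q)$, then transport the Hodge and integral structures along this identification and read off the statement from the results of O'Grady and Hassett--Tschinkel recalled above. Write $X=S^{[2]}$ and $\langle\alpha,\beta\rangle:=\int_X\alpha\cup\beta$ for the cup-product pairing in complementary degrees. Since $H^*(X,\Z)$ is torsion free, so is $H_*(X,\Z)$; hence $H_2(X,\Z)_f=H_2(X,\Z)$, the map $cl$ is the inverse of the Poincar\'e duality isomorphism $H^6(X,\Z)\xrightarrow{\sim}H_2(X,\Z)$, and the pairing $H^6(X,\Z)\times H^2(X,\Z)\to H^8(X,\Z)\cong\Z$ is unimodular, so $H^6(X,\Z)=\{\gamma\in H^6(X,\Q):\langle\gamma,v\rangle\in\Z\ \text{for all}\ v\in H^2(X,\Z)\}$.

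First I would build an isomorphism $\Theta\colon H^2(X,\Q)\xrightarrow{\sim}H^6(X,\Q)$. Since $q_X^{\vee}\in H^4(X,\Q)$, the first identity of Proposition~\ref{prop O'Grady int q_Xvee} gives $\langle q_X^{\vee}\alpha,v\rangle=\langle q_X^{\vee},\alpha v\rangle=25(\alpha,v)$ for all $\alpha,v\in H^2(X,\Q)$, where $(\,\cdot\,,\,\cdot\,)$ is the BBF form; as this form and the cup-product pairing are non-degenerate, $\Theta(\alpha):=\tfrac{1}{25}q_X^{\vee}\alpha$ is an isomorphism characterised by $\langle\Theta(\alpha),v\rangle=(\alpha,v)$ for all $v$. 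By Proposition~\ref{prop O'Grady c_2(X)} the class $q_X^{\vee}$ is of type $(2,2)$, so $\Theta$ shifts the Hodge bigrading by $(2,2)$, and a dimension count using Hodge symmetry shows it restricts to an isomorphism $H^{1,1}(X)\xrightarrow{\sim}H^{3,3}(X)$. Consequently $\Theta(\alpha)$ is of type $(3,3)$ precisely when $\alpha\in\NS(X)_{\Q}$, and $\Theta(\alpha)\in H^6(X,\Z)$ precisely when $(\alpha,v)\in\Z$ for all $v\in H^2(X,\Z)$, i.e. when $\alpha$ lies in the dual lattice $H^2(X,\Z)^{\vee}$. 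Therefore $\Theta$ identifies $H^{3,3}(X,\Z)$ with $H^2(X,\Z)^{\vee}\cap\NS(X)_{\Q}$.

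Next I would bring in Proposition~\ref{Prop Hassett Tschinkel}: if $b\in H^2(X,\Z)$ is primitive with $\textnormal{div}(b)=d$, then $\langle cl(b^{\vee}),v\rangle=\epsilon(b^{\vee}\cap v)=\tfrac{1}{d}(b,v)$ for all $v\in H^2(X,\Z)$, hence for all $v\in H^2(X,\Q)$, so $cl(b^{\vee})=\Theta(b/d)$. For $X=S_{2t}^{[2]}$ with the basis $\{h,\delta\}$ of $\Pic(X)=i(\Pic(S_{2t}))\oplus\Z\delta$, the orthogonal splitting $H^2(X,\Z)=i(H^2(S_{2t},\Z))\oplus\Z\delta$ with $i(H^2(S_{2t},\Z))$ unimodular, $(h,\delta)=0$, $q_X(h)=2t$ and $q_X(\delta)=-2$ gives $\textnormal{div}(h)=1$ (a primitive vector of a unimodular lattice has divisibility $1$) and $\textnormal{div}(\delta)=2$, so $h^{\vee}_6=\Theta(h)$ and $\delta^{\vee}_6=\Theta(\delta/2)$. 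Expanding with Proposition~\ref{prop rmk 2.1 OG}, $\langle\tfrac{1}{6t}h^3,v\rangle=\tfrac{1}{6t}\langle h^2,hv\rangle=(h,v)$ and $\langle\tfrac{1}{4t}h^2\delta,v\rangle=\tfrac{1}{4t}\langle h^2,\delta v\rangle=\tfrac{1}{2}(\delta,v)$ for all $v$, whence $h^{\vee}_6=\tfrac{1}{6t}h^3$ and $\delta^{\vee}_6=\tfrac{1}{4t}h^2\delta$. Moreover $H^2(X,\Z)^{\vee}=i(H^2(S_{2t},\Z))\oplus\tfrac{1}{2}\Z\delta$, so $H^2(X,\Z)^{\vee}\cap\NS(X)_{\Q}=\Z h\oplus\Z\tfrac{\delta}{2}$, and applying $\Theta$ yields $H^{3,3}(X,\Z)=\Z h^{\vee}_6\oplus\Z\delta^{\vee}_6$; the statement for a general K3 surface $S$ is obtained the same way, starting from a basis of $\Pic(S^{[2]})$ adapted to the splitting $i(\Pic(S))\oplus\Z\delta$. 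Finally, each of $q_X^{\vee}h,\,q_X^{\vee}\delta,\,\delta^3,\,h\delta^2$ is an algebraic class of type $(3,3)$, hence lies in $\Theta(\Q h\oplus\Q\delta)$; writing it as $\Theta(\alpha)$ and determining $\alpha$ by pairing against $h$ and $\delta$ via $\langle\Theta(\alpha),v\rangle=(\alpha,v)$, the intersection numbers $\int_X h^2\delta^2=-4t$, $\int_X h\delta^3=0$, $\int_X\delta^4=12$ from Proposition~\ref{prop rmk 2.1 OG}, and $\langle q_X^{\vee}h,v\rangle=25(h,v)$, $\langle q_X^{\vee}\delta,v\rangle=25(\delta,v)$ from Proposition~\ref{prop O'Grady int q_Xvee}, gives $q_X^{\vee}h=25h^{\vee}_6=\tfrac{25}{6t}h^3$, $q_X^{\vee}\delta=50\delta^{\vee}_6=\tfrac{25}{2t}h^2\delta$, $\delta^3=-12\delta^{\vee}_6=-\tfrac{3}{t}h^2\delta$ and $h\delta^2=-2h^{\vee}_6=-\tfrac{1}{3t}h^3$.

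The main obstacle is carrying Step~1 through rigorously together with its integral refinement: one must check that the abstract identification $\Theta$ is compatible at once with the Hodge decompositions and with the unimodular integral pairing on $H^*(X,\Z)$, and --- the subtle point --- that $H^2(X,\Z)^{\vee}\cap\NS(X)_{\Q}$ is generated \emph{exactly}, and not merely up to finite index, by the Hassett--Tschinkel classes $cl(b_i^{\vee})$; this is where the unimodularity of $H^2(S,\Z)$ and the precise divisibilities $\textnormal{div}(h)=1$, $\textnormal{div}(\delta)=2$ are essential.
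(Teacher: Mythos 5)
Your argument is correct, and it organises the proof differently from the paper. The paper first works in homology: it regards $H_2(X,\Z)_f$ as a weight $-2$ Hodge structure, uses that the cap product and Poincar\'e duality are morphisms of Hodge structures to get the integral statement, identifies $H^{3,3}(X,\Q)=\Q h^3\oplus\Q h^2\delta$ by cupping with $h^2$ (hard Lefschetz), and then pins down $h_6^{\vee},\delta_6^{\vee},\delta^3,h\delta^2,q_X^{\vee}h,q_X^{\vee}\delta$ by exactly the undetermined-coefficients pairing computations against $h$ and $\delta$ that you also perform. You instead package the identification into the single map $\Theta=\tfrac{1}{25}q_X^{\vee}\cdot(-)$, characterised by $\langle\Theta(\alpha),v\rangle=(\alpha,v)$, combined with the unimodularity of the $H^6\times H^2$ cup-product pairing (which requires Markman's torsion-freeness of $H^*(S^{[2]},\Z)$, recalled in Section 3 -- do cite it there). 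What this buys is a cleaner integral statement, $H^{3,3}(X,\Z)\cong H^2(X,\Z)^{\vee}\cap\NS(X)_{\Q}$, and it makes the role of the divisibilities $\textnormal{div}(h)=1$, $\textnormal{div}(\delta)=2$ explicit, whereas the paper passes from the Hassett--Tschinkel correspondence on primitive elements to a basis of $H_{-1,-1}(S^{[2]},\Z)$ rather quickly. One caveat, which your more explicit method actually exposes: you prove the opening general statement only for bases adapted to the splitting $i(\Pic(S))\oplus\Z\delta$, while the theorem asserts it for an arbitrary basis of $\Pic(S^{[2]})$; exact generation genuinely depends on the chosen basis. Already for $\Pic(X)=\Z h\oplus\Z\delta$, the basis $\{h,\,h+\delta\}$ consists of divisibility-$1$ classes, so $cl(h^{\vee})=h^{\vee}_6$ and $cl((h+\delta)^{\vee})=\Theta(h+\delta)=h^{\vee}_6+2\delta^{\vee}_6$ span only an index-$2$ sublattice of $H^{3,3}(X,\Z)$. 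So your caution at that step is justified rather than a defect of your write-up; the specialised statement for the basis $\{h,\delta\}$, which is the only one used in the remainder of the paper, is fully established by your argument.
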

\begin{proof}
The torsion free quotient group $H_2(S^{[2]}, \Z)_f$ is a Hodge structure of weight $-2$. Moreover the cap product
\begin{equation*}
	H^2(S^{[2]}, \Z) \otimes H_2(S^{[2]}, \Z)_f \rightarrow H_0(S^{[2]}, \Z)
\end{equation*}
is a morphism of Hodge structures of weight $0$, see for instance \cite{peters2008mixed}, hence if $R \in H_2(S^{[2]}, \Z)_f$ is primitive of type $(-1, -1)$, the corresponding $\rho \in H^2(S^{[2]}, \Z)$ is of type $(1, 1)$. Thus $\{b_1^{\vee}, \dots , b_r^{\vee}\}$ is a basis of
\begin{equation*}
	H_{-1, -1}(S^{[2]}, \Z) := H_2(S^{[2]}, \Z) \cap H_{-1, -1}(S^{[2]}),
\end{equation*}
where $H_{-1, -1}(S^{[2]})$ is the component of type $(-1, -1)$ of $H_2(S^{[2]}, \C)$. The Poincaré duality
\begin{equation*}
	PD: H^6(S^{[2]}, \Z) \xrightarrow{\sim} H_2(S^{[2]}, \Z)_f
\end{equation*}
is an isomorphism of Hodge structures of weight $-4$, hence $H^{3,3}(S^{[2]}, \Z) \cong H_{-1, -1}(S^{[2]}, \Z)$, so $\{cl(b_1^{\vee}), \dots , cl(b_r^{\vee})\}$ is a basis of $H^{3,3}(S^{[2]}, \Z)$.
	
Let now $X=S^{[2]}_{2t}$ be the Hilbert square of a projective K3 surface with $\Pic(S_{2t})=\Z H$ and $H^2=2t$, and let $h \in \Pic(X)$ be the line bundle induced by the ample generator of $\Pic(S_{2t})$. The cup product with $h^2$ gives an isomorphism between $H^{1,1}(X, \Q)$ and $H^{3,3}(X, \Q)$. Then $H^{3,3}(X, \Q)$ is 2-dimensional and it is generated by $h^3$ and\,\,$h^2\delta$ since $H^{1,1}(X, \Q) \cong \Pic(X)\otimes \Q$. We now represent $\delta^3$, $h\delta^2, q_X^{\vee}h$ and $q_X^{\vee}\delta$ in terms of this basis. If $\delta^3=xh^3+yh^2\delta$ with $x, y \in \Q$, by Proposition $\ref{prop rmk 2.1 OG}$ we have
\begin{equation*}
	h\delta^3=0=x\langle h^2, h^2 \rangle + y \langle h^2, h\delta \rangle=12t^2x, \qquad \delta^4=3\cdot(-2)^2=x\langle h^2, h\delta \rangle + y \langle h^2, \delta^2\rangle=-4ty,
\end{equation*}
	which give $x=0$ and $y=-\frac{3}{t}$, hence
	\begin{equation*}
		\delta^3=-\frac{3}{t} h^2\delta.
	\end{equation*}
	Similarly we obtain
	\begin{equation*}
		h\delta^2=-\frac{1}{3t}h^3, \qquad q_X^{\vee}h=\frac{25}{6t}h^3, \qquad q_X^{\vee}\delta=\frac{25}{2t}h^2\delta.
	\end{equation*}
	Consider now $H^{3,3}(X, \Z)$. The cohomology group $H^6(X, \Z)$ is torsion free, and $H^{3,3}(X, \Q)$ has dimension $2$, hence $\rk(H^{3,3}(X, \Z))=2$. Let $h^{\vee}, \delta^{\vee} \in H_2(X, \Z)$ be the primitive classes which correspond, by Proposition \ref{Prop Hassett Tschinkel}, respectively to the primitive classes $h, \delta \in H^2(X, \Z)$. Using Proposition \ref{Prop Hassett Tschinkel} one gets $h^{\vee}=h$ and $\delta^{\vee}=\frac{\delta}{2}$ seen as elements in $H^2(X, \Q)$. Let $PD$ be the Poincaré duality: then $h_6^{\vee}:=PD^{-1}(h^{\vee})$ and $\delta_6^{\vee}:=PD^{-1}(\delta^{\vee})$ give a basis of $H^{3,3}(X, \Z)$, so we have
	\begin{equation*}
		H^{3,3}(X, \Z) \cong \Z h^{\vee}_6 \oplus \Z \delta^{\vee}_6.
	\end{equation*}
	Moreover, if $\cdot$ denotes the cup product and $\cap$ the cap product, we get
	\begin{equation} \label{eq 1 proof H3,3}
		\displaystyle\int_X h_6^{\vee} \cdot x = \epsilon (h^{\vee} \cap x), \qquad \displaystyle\int_X \delta_6^{\vee} \cdot x = \epsilon (\delta^{\vee} \cap x)
	\end{equation}
	for every $x \in H^2(X, \Z)$, see \cite[p.249]{hatcher2005algebraic}, and $\epsilon: H^0(X, \Z) \xrightarrow{\sim} \Z$. Again by Proposition $\ref{Prop Hassett Tschinkel}$ we have
	\begin{equation} \label{eq 2 proof H3,3}
		\epsilon(h^{\vee} \cap h)=(h, h)=2t, \qquad \epsilon(h^{\vee} \cap \delta)=(h, \delta)= 0.
	\end{equation}
	If we write $h_6^{\vee}=\alpha h^3+\beta h^2\delta$ for some $\alpha, \beta \in \Q$, by Proposition $\ref{prop rmk 2.1 OG}$ we obtain
	\begin{equation} \label{eq 3 proof H3,3}
		\displaystyle\int_X h_6^{\vee} \cdot h=\alpha \langle h^2, h^2 \rangle + \beta \langle h^2, h\delta \rangle = 12 \alpha t^2, \qquad \displaystyle\int_X h_6^{\vee}\cdot \delta = \alpha \langle h^2, h\delta \rangle + \beta \langle h^2, \delta^2 \rangle = -4\beta t.
	\end{equation}
	Then $(\ref{eq 1 proof H3,3})$, $(\ref{eq 2 proof H3,3})$ and $(\ref{eq 3 proof H3,3})$ give $\alpha=\frac{1}{6t}$ and $\beta=0$, hence $h_6^{\vee}=\frac{1}{6t}h^3$. Similarly we obtain $\delta^{\vee}_6=\frac{1}{4t}h^2\delta$.
\end{proof}
\section{Fixed locus of the anti-symplectic involution} \label{Section fixed locus}
Let $X$ be the Hilbert square of a generic K3 surface $S_{2t}$ of degree $2t$ such that $X$ admits an ample divisor $D$ with $q_X(D)=2$. We recall some properties of $F:=\text{Fix}(\iota)$, the locus of points of $X$ fixed by the anti-symplectic involution $\iota \in \Aut(X)$, then we compute its integral Hodge class in $H^{2,2}(X, \Z)$ and we show that the rational map $\varphi_{|D|}$ induced by the complete linear system $|D|$ factors through the quotient $\pi: X \rightarrow X/\langle \iota \rangle$ with respect to $\iota$. As usual we denote by $h \in \Pic(X)$ the line bundle induced by the ample generator of $\Pic(S_{2t})$. Recall that the class of $D$ in $\Pic(X)$ is $bh-a\delta$, where $(a, b)$ is the minimal solution of the negative Pell equation $P_t(-1)$. 

First of all, we prove that $F$ is a connected surface in our case.
\begin{prop} \label{prop F connected}
	Keep notation as above. Then $F=\textnormal{Fix}(\iota)$ is a connected Lagrangian surface in $X$.
\end{prop}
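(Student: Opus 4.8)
The plan is to prove that $F = \mathrm{Fix}(\iota)$ is a connected Lagrangian surface by combining general structural facts about fixed loci of anti-symplectic involutions on IHS fourfolds with the specific lattice-theoretic data available for $X = S_{2t}^{[2]}$. First I would recall the standard fact (going back to work on anti-symplectic involutions, e.g.\ via the holomorphic Lefschetz fixed point formula, or the analytic local model) that the fixed locus of an anti-symplectic involution on an IHS manifold of dimension $2n$ is a smooth Lagrangian submanifold of pure dimension $n$; since $\dim X = 4$, this gives immediately that $F$ is a smooth Lagrangian surface, assuming it is nonempty. So the two real points to establish are \emph{nonemptiness} and \emph{connectedness}.

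For nonemptiness, I would argue that an anti-symplectic involution on a compact Kähler manifold always has nonempty fixed locus: the topological Lefschetz number can be computed from the action of $\iota^*$ on $H^*(X,\Q)$, and since $\iota$ is anti-symplectic its action on $H^{2n,0} \oplus H^{0,2n}$ and more generally on the Hodge pieces is constrained so that the Lefschetz number is positive (this is the standard argument; alternatively one can invoke that a non-natural involution on $S_{2t}^{[2]}$ as in Theorem~\ref{thm BCNS Aut(X)} is known to have a $2$-dimensional fixed locus). In fact, the cleanest route is probably to \emph{defer the nonemptiness to the class computation}: the subsequent Theorem~\ref{thm class F and comm diagram} computes $[F] = 5D^2 - \tfrac13 c_2(X)$ in $H^{2,2}(X,\Z)$, and one checks this class is nonzero (using the Gram matrix in Theorem~\ref{thm basis H2,2(S2, Z)}), which forces $F \neq \emptyset$. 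Whether the logical order of the paper permits this here, or whether a self-contained Lefschetz-number argument is preferred, is a presentational choice; I would favour the short Lefschetz-number argument to keep Section~\ref{Section fixed locus} linearly ordered.

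For connectedness, the key input is that $Y := X/\langle \iota \rangle$ carries the quotient and $F$ is (set-theoretically) the branch locus of $\pi \colon X \to Y$; equivalently $F \hookrightarrow X \to Y$ maps $F$ isomorphically onto $\mathrm{Sing}(Y)$ (or onto the branch surface). Connectedness of $F$ would then follow from connectedness of the branch divisor of the double cover $\varphi_{|D|} \colon X \to Y \subset \Pn^5$, which in turn I would extract from the fact that $\varphi_{|D|}^* \mathcal{O}_{\Pn^5}(1) = \mathcal{O}_X(D)$ and the branch locus is cut out by a section related to $D$; more concretely, one can use that $H^1(X, \mathcal{O}_X) = 0$ and $H^1(X, \mathcal{O}_X(-D)) = 0$ (IHS manifolds are simply connected, and Kodaira vanishing applies since $D$ is ample), so the branch locus, being an ample-type divisor-theoretic object, is connected by a Lefschetz-type / Hodge-index argument. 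Alternatively — and this is likely the argument actually intended — one uses that a disconnected fixed locus would force the involution $\iota^*$ on $H^2(X,\Z)$, or on $H^{2,2}(X,\Z)$, to split off more invariants/anti-invariants than the rank-$1$ Picard-rank structure allows: since $\mathrm{rk}\,\NS(X) = 2$ and $\iota^*$ acts as the specific reflection in \eqref{eq matrix action iota}, the invariant lattice $\NS(X)^{\iota}$ has rank $1$, and each connected component of $F$ would contribute independently to $H^2(X/\langle\iota\rangle,\Q) \cong H^2(X,\Q)^{\iota}$ in a way that caps the number of components.

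The main obstacle I anticipate is connectedness: smoothness and Lagrangian-ness are essentially formal, and nonemptiness is a short characteristic-class or Lefschetz computation, but ruling out a disconnected $F$ requires genuinely using the rank-$2$ Picard lattice and the explicit form of $\iota^*$ — one must show that the "extra" cohomology a second component would produce is incompatible with $b_2^{\iota}(X)$ computed from \eqref{eq matrix action iota} and the description of $H^2(X,\Z)$ for $K3^{[2]}$-type. I would therefore spend most of the effort identifying the precise cohomological constraint (likely: the number of connected components of $F$ is bounded by an expression in the Betti numbers of $X$ and $Y$, à la Smith theory or the equivariant Euler characteristic), and then checking that constraint forces a single component in our situation.
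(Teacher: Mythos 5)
The Lagrangian part of your plan is fine and matches the paper: one simply cites Beauville's result that the fixed locus of an anti-symplectic involution on an IHS manifold is a (smooth) Lagrangian submanifold, so in dimension four $F$ is a surface. The real issue is connectedness, and here your proposal has a genuine gap. Your first route assumes that $\varphi_{|D|}\colon X \to Y$ is already known to be a double cover with branch locus $F$; but in the paper's logical order this is precisely the main theorem, proved only later, and its proof \emph{depends} on the present proposition (connectedness of $F$ is what makes the branch divisor of $\mathrm{Bl}_F(X)\to W$ a smooth prime divisor, which is needed to apply the splitting of $H^0(X,\mathcal{O}_X(D))$ in Proposition \ref{prop decomposition H^0(X, D)}). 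So that argument is circular. Your vanishing/Lefschetz-type argument for connectedness of the branch locus also does not apply as stated: $F$ has codimension $2$ in $X$, it is not a divisor, so ``connectedness of an ample-type divisor'' theorems are not available; and on the blow-up the branch divisor is the exceptional divisor over $F$, whose connectedness is equivalent to, not a source of, the statement. Finally, your fallback (a Smith-theory or equivariant Euler characteristic bound forcing one component) is not carried out and its mechanism is misidentified: components of $F$ are surfaces and contribute to $H^4(X)^{\iota}$ (or $H^{2,2}$), not to $H^2(X/\langle\iota\rangle,\Q)\cong H^2(X,\Q)^{\iota}$, and no explicit inequality forcing a single component is produced. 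Note that connectedness is genuinely not formal here: anti-symplectic involutions on K3 surfaces routinely have disconnected fixed loci, so some input specific to this polarization is required.

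The paper's proof supplies exactly that input, in two alternative ways, neither of which appears in your plan. First, by a result of Boissi\`ere--Cattaneo--Markushevich--Sarti the moduli space of triples $(X,\iota_X,i_X)$ with $\iota_X$ anti-symplectic acting as the reflection in an ample class of square $2$ is connected (any two points are deformation equivalent), and deformation equivalence preserves the topology of the fixed locus; one then checks connectedness at a single point, namely the Beauville involution on $S_4^{[2]}$, where Welters proved $\mathrm{Fix}(\iota)$ is connected. Second, alternatively, one verifies that $D=bh-a\delta$ has divisibility $1$ in $(H^2(X,\Z),q_X)$ --- using that $b$ is odd (Proposition \ref{prop D=bh-adelta then b odd}), $\gcd(a,b)=1$ and unimodularity of $H^2(S_{2t},\Z)$ --- and then invokes the theorem of Flapan--Macr\`i--O'Grady--Sacc\`a, which states that the fixed surface of such an involution is connected when the invariant polarization has divisibility $1$. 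Your remark that ``the invariant lattice has rank $1$'' gestures in the direction of this second route, but without identifying the divisibility condition and the theorem that converts it into connectedness (or, in the first route, the connectedness of the moduli space of pairs plus the $t=2$ anchor case), the key step of the proposition remains unproved.
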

\begin{proof}
	By \cite[Lemma 1]{beauville2011antisymplectic} the fixed locus $F$ is a Lagrangian surface. We now show that $F$ is connected. Let $\mathcal{M}^{\rho}_{\langle 2 \rangle}$ be the moduli space which parametrizes triples $(X, \iota_X, i_X)$, where $X$ is an IHS manifold of $K3^{[2]}$-type, $\iota_X \in \Aut(X)$ is an anti-symplectic involution whose action on $H^2(X, \Z)$ is the reflection in the class of an ample divisor $D$ with $q_X(D)=2$ and $i_X: \langle 2 \rangle \hookrightarrow \NS(X)$ is a primitive embedding such that $i(\langle 2 \rangle)$ contains the class of $D$. Any two points in the moduli space $\mathcal{M}^{\rho}_{\langle 2 \rangle}$ are deformation equivalent by \cite[Corollary 4.1, Theorem 5.1]{boissiere2019non}. Since deformation equivalence preserves topological properties of $\text{Fix}(\iota)$, it suffices to find a point $(X, \iota_X, i_X) \in \mathcal{M}^{\rho}_{\langle 2 \rangle}$ such that $\text{Fix}(\iota_X)$ is connected. Let $X=S^{[2]}_4$ be the Hilbert square of a smooth quartic surface of $\Pn^3$ with Picard rank $1$, and let $\iota_X \in \Aut(X)$ be the Beauville involution: the surface $\text{Fix}(\iota_X)$ is connected by \cite[Corollary 3.4.4]{welters1981abel}. Alternatively, if we show that the class of $D$, which is $bh-a\delta$, has divisibility $1$ in the lattice $(H^2(X, \Z), q_X)$, then the surface $\text{Fix}(\iota)$ is connected by \cite[Main Theorem]{flapan2020antisymplectic}. Since $(a, b)$ is the minimal solution of the negative Pell equation $P_t(-1)$, the integers\,\,$a$ and $b$ are coprime and $b$ is odd by Proposition \ref{prop D=bh-adelta then b odd}. Since $H^2(S_{2t}, \Z)$ is unimodular, there exists $x \in H^2(X, \Z)$ such that $(h, x)=1$, hence there exist $\alpha, \beta \in \Z$ such that $(D, \alpha \delta+\beta x)=1$. We conclude that $\text{div}(bh-a\delta)=1$, and $F$ is connected. 
\end{proof}
Keep notation as above. By \cite[$\S 3$]{camere2019calabi} the quotient variety $X/\langle \iota \rangle$ is singular along $\pi^{\prime}(F)$, where $\pi^{\prime}: X \rightarrow X/\langle \iota \rangle$ is the quotient map. The desingularization of $X/\langle \iota \rangle$, which we call $W$, is the blow-up of $X / \langle \iota \rangle$ along its singular locus, and it is a Calabi--Yau variety. Equivalently, consider the blow-up $\text{Bl}_F(X)$ of $X$ in the locus $F=\text{Fix}(\iota)$ of points fixed by $\iota$. The involution $\iota$ gives rise to an involution $\tilde{\iota}$ on $\text{Bl}_F(X)$ which fixes the exceptional divisor $E \subset \text{Bl}_F(X)$. One can show that the quotient $\text{Bl}_F(X)/\langle \tilde{\iota}\rangle$ is isomorphic to $W$, obtaining the following commutative diagram, see \cite[Theorem 3.6]{camere2019calabi} for more details:
\begin{equation} \label{eq diagram Calabi-Yau}
	\begin{tikzcd}
		\text{Bl}_F(X) \arrow[r, "\pi"]  \arrow[d, "\beta"] & W \arrow[d, "\beta^{\prime}"] \\ 
		X \arrow[r, "\pi^{\prime}"] & X/\langle \iota \rangle \, .
	\end{tikzcd}
\end{equation}
We now state the following useful technical lemma. See \cite[Lemma 4.3]{derenthal2015cox} for a more general statement and for the proof.
\begin{lem} \label{lem Derenthal Laface}
Let $\pi: X \rightarrow Y$ be a double cover of a smooth projective variety such that the branch locus is a smooth prime divisor $B \in \Div(Y)$, and denote by $\iota \in \Aut(X)$ the involution associated to the double cover. If $\Pic(X)^{\iota}$ is the subgroup of $\iota$-invariant line bundles on $X$, then $\pi^*\Pic(Y) \cong \Pic(X)^{\iota}$.
\end{lem}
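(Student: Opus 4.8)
The plan is to read the statement as a descent result for line bundles along the degree-two quotient $\pi\colon X\to Y=X/\langle\iota\rangle$, working over $\C$. First I would fix the concrete picture of the double cover: $X$ is the relative spectrum $\operatorname{Spec}_Y(\Ol_Y\oplus\mathcal L^{-1})$ for a line bundle $\mathcal L$ with $\mathcal L^{\otimes 2}\cong\Ol_Y(B)$, so $\pi$ is finite flat of degree two with $\pi_*\Ol_X=\Ol_Y\oplus\mathcal L^{-1}$; since $Y$ and $B$ are smooth, $X$ is smooth, and the fixed locus $\operatorname{Fix}(\iota)$ is the ramification divisor $R$, which maps isomorphically onto $B$ and is therefore connected — this connectedness will be the crucial input. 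The inclusion $\pi^*\Pic(Y)\subseteq\Pic(X)^{\iota}$ is immediate since $\pi\circ\iota=\pi$, and $\pi^*$ is injective: $\pi$ is finite flat, and if $\pi^*L\cong\Ol_X$ then, taking $\iota$-invariant direct images for the canonical linearizations and using the projection formula $\pi_*\pi^*L=L\otimes\pi_*\Ol_X$ together with $(\pi_*\Ol_X)^{\iota}=\Ol_Y$, one gets $L\cong\Ol_Y$. So the whole content is the inclusion $\Pic(X)^{\iota}\subseteq\pi^*\Pic(Y)$.

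Given $N\in\Pic(X)$ with an isomorphism $\varphi\colon\iota^*N\xrightarrow{\sim}N$, I would first promote $\varphi$ to an $\iota$-linearization: using the canonical identification $\iota^*\iota^*N\cong N$, the automorphism $\varphi\circ\iota^*\varphi$ of $N$ is multiplication by a scalar $\lambda\in\C^*$, because $X$ is connected and projective; replacing $\varphi$ by $\mu\varphi$ rescales $\lambda$ by $\mu^2$, so choosing $\mu^2=\lambda^{-1}$ (here I use that we are over $\C$) turns $\varphi$ into a genuine linearization. Next, since $R=\operatorname{Fix}(\iota)$ is connected, $\iota$ acts on the line bundle $N|_R$ by one scalar $\epsilon\in\{\pm1\}$; after composing the linearization with the sign character of $\langle\iota\rangle$ if needed, I may assume $\epsilon=+1$.

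With this normalized linearization, $\pi\circ\iota=\pi$ makes $\iota$ act $\Ol_Y$-linearly on the rank-two bundle $\pi_*N$, and I would set $L:=(\pi_*N)^{\iota}=\operatorname{im}\!\big(\tfrac12(\id+\iota_*)\colon\pi_*N\to\pi_*N\big)$. It then remains to show that $L$ is invertible and that the adjunction counit $\pi^*L\to N$ is an isomorphism. Both are local on $Y$, and after completion they reduce, using smoothness of $Y$ and $B$, to the model $\Ol_Y=\C[[t_1,\dots,t_n]]$, $B=(t_1)$, $\Ol_X=\C[[s,t_2,\dots,t_n]]$ with $s^2=t_1$ and $\iota(s)=-s$: writing $N=\Ol_X e$, the condition $\epsilon=+1$ lets me normalize to $\iota(e)=e$, so $\pi_*N=\Ol_Y\,e\oplus\Ol_Y\,se$, hence $L=\Ol_Y\,e$ is free of rank one and $\pi^*L=\Ol_X\,e\to N$ is the identity. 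Over $Y\setminus B$ the map $\pi$ is étale and the same statement is ordinary descent of line bundles along a $\Z/2$-torsor. This gives $N\cong\pi^*L$ and finishes the proof.

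The step I expect to be the main obstacle is the normalization $\epsilon=+1$. An abstract isomorphism $\iota^*N\cong N$ says nothing a priori about how $N$ behaves along the ramification, and for the ``wrong'' linearization the sheaf $(\pi_*N)^{\iota}$ is still invertible but its pullback is $N(-2R)$ rather than $N$. What saves the argument is precisely that $\operatorname{Fix}(\iota)\cong B$ is connected, so the ambiguity is encoded by a single global sign that can be removed by twisting the linearization by the sign character of $\langle\iota\rangle$; everything else is routine bookkeeping with the explicit double-cover model and the facts — valid over $\C$ — that scalars are squares and that $\Z/2$ is linearly reductive. This is exactly the more general phenomenon established in \cite[Lemma 4.3]{derenthal2015cox}.
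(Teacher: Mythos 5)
Your argument is correct in substance, but note that the paper contains no proof of this lemma to compare against: it is stated as a known technical fact, with the reader referred to \cite[Lemma 4.3]{derenthal2015cox} for a more general statement and its proof. What you supply is a self-contained equivariant-descent argument (a linearization exists because the obstruction in $H^2(\Z/2,\C^*)$ vanishes; the sign of the action on $N|_{\operatorname{Fix}(\iota)}$ is constant by connectedness and can be normalized to $+1$ by twisting with the character of $\Z/2$; then $L=(\pi_*N)^{\iota}$ is invertible with $\pi^*L\xrightarrow{\sim}N$, checked by \'etale descent off $B$ and by the local model $s^2=t_1$, $\iota(s)=-s$ along $B$). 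This is exactly the Kempf-type descent mechanism behind the cited lemma, and you correctly identify the role of the hypothesis that $B$ is a \emph{prime} divisor: connectedness of $\operatorname{Fix}(\iota)\cong B$ is what lets a single global sign be removed, which is also why the paper verifies connectedness of $F$ (Proposition \ref{prop F connected}) before invoking the lemma in the proof of Proposition \ref{prop decomposition H^0(X, D)}.

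Three small points, none fatal. First, in your aside about the ``wrong'' linearization, the pullback of $(\pi_*N)^{\iota}$ is $N(-R)$, not $N(-2R)$: locally the invariants are generated by $se$ and $\operatorname{div}(s)=R$, since $\pi^*B=2R$. Second, your one-line injectivity argument tacitly assumes the trivialization $\pi^*L\cong\Ol_X$ is equivariant for the canonical linearizations; a priori it could be anti-equivariant, in which case taking invariants yields $L\cong\mathcal L^{-1}$, where $\mathcal L^{\otimes 2}\cong\Ol_Y(B)$ — this is excluded because then $\Ol_X\cong\pi^*\mathcal L^{-1}\cong\Ol_X(-R)$, impossible for the nonzero effective divisor $R$; in any case injectivity is not needed for the statement, which only asserts that the image $\pi^*\Pic(Y)$ coincides with $\Pic(X)^{\iota}$. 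Third, the step ``the condition $\epsilon=+1$ lets me normalize to $\iota(e)=e$'' in the complete local ring deserves a word: writing $\iota\cdot e=fe$ with $f\cdot(f\circ\iota)=1$ and $f\equiv 1$ along $R$, the generator $e'=\tfrac12(1+f)e$ does the job. With these touch-ups your proof is a complete substitute for the external reference.
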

Similarly to the case in \cite[$\S 2.6$]{van2007nikulin}, since $\iota^*D \cong D$ there is an involution induced by $\iota$ on $\Pn(H^0(X, \Ol_X(D))^{\vee})$ which has two fixed spaces $\Pn^a$ and $\Pn^b$, where $a+1+b+1=6$ and $a=-1$ if the corresponding eigenspace of $\iota^*$ on $H^0(X, \Ol_X(D))$ is zero, similarly for $b$. See \cite[p.32]{mumford1994geometric} for details on the action of $\iota$ on $H^0(X, \Ol_X(D))$. In particular the rational map $\varphi_{|D|}$ induced by the complete linear system $|D|$ factors through the quotient $X \rightarrow X/\langle \iota \rangle$ if and only if the action induced by $\iota$ on $\Pn(H^0(X, \Ol_X(D))^{\vee})$ is trivial.

Let $B \in \Div(W)$ be the branch divisor of $\pi: \text{Bl}_F(X) \rightarrow W$. By \cite[Lemma I.17.1]{barth2015compact} there exists a divisor $N \in \Div(W)$ such that $\Ol_W(2N) = \Ol_W(B)$ in $\Pic(W)$. Let $D$ be the ample divisor on $X$ with $q_X(D)=2$. We show the following result, similar to \cite[Proposition 2.7, Item (2)]{van2007nikulin} obtained in the case of K3 surfaces admitting a symplectic involution.
\begin{prop} \label{prop decomposition H^0(X, D)}
	Keep notation as above. Consider diagram $(\ref{eq diagram Calabi-Yau})$. Let $\mathcal{D}:=\Ol_X(D)$ and $\mathcal{N}:=\Ol_W(N)$. There exists a line bundle $\mathcal{D}_W \in \Pic(W)$ such that $\pi^*\mathcal{D}_W \cong \beta^*\mathcal{D}$. Moreover, the vector space $H^0(X, \mathcal{D})$ decomposes as
	\begin{equation} \label{eq decomposition H^0(X, D)}
		H^0(X, \mathcal{D}) \cong H^0(W, \mathcal{D}_W) \oplus H^0(W, \mathcal{D}_W-\mathcal{N}),
	\end{equation}
	which is the decomposition of $H^0(X, \mathcal{D})$ into $\iota^*$-eigenspaces.
\end{prop}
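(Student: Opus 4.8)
The plan is to move the whole question onto the blow-up $\beta\colon\text{Bl}_F(X)\to X$, where $\iota$ becomes the sheet-swap of a genuine double cover. First I would record that $F=\text{Fix}(\iota)$ is smooth (the fixed locus of a holomorphic involution on a complex manifold is smooth) and connected by Proposition~\ref{prop F connected}; since $F$ has codimension $2$ in $X$, the exceptional divisor $E\subset\text{Bl}_F(X)$ is a $\Pn^1$-bundle over $F$, hence a smooth prime divisor. Because $F$ is Lagrangian and $\iota$ is anti-symplectic, $\iota$ acts by $-\id$ on the normal directions to $F$, so the induced involution $\tilde\iota$ fixes $E$ pointwise and is free on $\text{Bl}_F(X)\setminus E\cong X\setminus F$; thus $\pi\colon\text{Bl}_F(X)\to W$ is a double cover with ramification divisor $E$ and branch divisor $B=\pi(E)\cong E$, a smooth prime divisor on the smooth variety $W$ (smoothness of $W$ being part of the construction recalled in \cite[Theorem~3.6]{camere2019calabi}). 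By \cite[Lemma~I.17.1]{barth2015compact} one then has $\pi_*\Ol_{\text{Bl}_F(X)}\cong\Ol_W\oplus\mathcal N^{-1}$, where $\mathcal N=\Ol_W(N)$ and $\Ol_W(2N)=\Ol_W(B)$, with $\tilde\iota^*$ acting as $+1$ on the summand $\Ol_W$ and as $-1$ on $\mathcal N^{-1}$.

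For the first assertion, $\beta\circ\tilde\iota=\iota\circ\beta$ together with $\iota^*\mathcal D\cong\mathcal D$ gives $\tilde\iota^*(\beta^*\mathcal D)\cong\beta^*\mathcal D$, so $\beta^*\mathcal D$ lies in the subgroup $\Pic(\text{Bl}_F(X))^{\tilde\iota}$ of $\tilde\iota$-invariant line bundles. Applying Lemma~\ref{lem Derenthal Laface} to the double cover $\pi$ (legitimate since $B$ is a smooth prime divisor) identifies $\pi^*\Pic(W)\cong\Pic(\text{Bl}_F(X))^{\tilde\iota}$, and hence produces a line bundle $\mathcal D_W\in\Pic(W)$ with $\pi^*\mathcal D_W\cong\beta^*\mathcal D$.

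For the decomposition I would compose two canonical identifications. On one side, $\beta$ being the blow-up of a smooth centre gives $\beta_*\Ol_{\text{Bl}_F(X)}=\Ol_X$, so the projection formula yields an isomorphism $\beta^*\colon H^0(X,\mathcal D)\xrightarrow{\ \sim\ }H^0(\text{Bl}_F(X),\beta^*\mathcal D)$ which, once compatible $\langle\iota\rangle$- and $\langle\tilde\iota\rangle$-linearisations on $\mathcal D$ and $\beta^*\mathcal D$ are fixed, intertwines $\iota^*$ and $\tilde\iota^*$. On the other side, using $\beta^*\mathcal D\cong\pi^*\mathcal D_W$ and the projection formula for the finite morphism $\pi$,
\[
\pi_*(\pi^*\mathcal D_W)\cong\mathcal D_W\otimes\pi_*\Ol_{\text{Bl}_F(X)}\cong\mathcal D_W\oplus(\mathcal D_W\otimes\mathcal N^{-1}),
\]
which by the first paragraph is the decomposition of $\pi_*(\pi^*\mathcal D_W)$ into the $(+1)$- and $(-1)$-eigensheaves of $\tilde\iota^*$. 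Taking global sections and splicing the two identifications yields
\[
H^0(X,\mathcal D)\cong H^0(W,\mathcal D_W)\oplus H^0(W,\mathcal D_W-\mathcal N),
\]
and this is precisely the splitting of $H^0(X,\mathcal D)$ into the $(+1)$- and $(-1)$-eigenspaces of $\iota^*$.

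The only genuinely delicate point is the bookkeeping of the involution: one must choose the linearisation of $\mathcal D$ so that the linearisation it induces on $\beta^*\mathcal D\cong\pi^*\mathcal D_W$ agrees with the natural one coming from pull-back along $\pi$ (the sign is forced and can be pinned down because $H^0(W,\mathcal D_W)\neq 0$), and one must verify that the invariant part is $H^0(W,\mathcal D_W)$, which is consistent with the fact that $W$ resolves $X/\langle\iota\rangle$. Everything else is the standard theory of double covers branched along a smooth divisor, so I expect no serious obstacle.
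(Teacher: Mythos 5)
Your proposal is correct and follows essentially the same route as the paper: invariance of $\beta^*\mathcal{D}$ plus Lemma \ref{lem Derenthal Laface} to produce $\mathcal{D}_W$, the birational invariance $H^0(X,\mathcal{D})\cong H^0(\mathrm{Bl}_F(X),\beta^*\mathcal{D})$, and the projection formula together with the standard splitting $\pi_*\Ol_{\mathrm{Bl}_F(X)}\cong\Ol_W\oplus\Ol_W(-N)$ from \cite{barth2015compact}. The only (harmless) differences are that you identify the eigenspaces via the equivariant/linearisation bookkeeping of the eigensheaf decomposition, where the paper argues with the $\iota$-invariance of quotients $s/t$ of sections, and your aside that the sign can be fixed ``because $H^0(W,\mathcal{D}_W)\neq 0$'' invokes a fact only established later, but it is not needed for the statement.
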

\begin{proof}
	Consider $\pi: \text{Bl}_F(X) \rightarrow W$ appearing in diagram (\ref{eq diagram Calabi-Yau}): by Lemma \ref{lem Derenthal Laface} we obtain a divisor $D_W \in \Div(W)$ whose class $\mathcal{D}_W$ in $\Pic(W)$ is such that $\pi^*\mathcal{D}_W \cong \beta^*\mathcal{D}$ in $\Pic(\text{Bl}_F(X))$. Note that Lemma \ref{lem Derenthal Laface} can be applied: $F \subset X$ is smooth and connected by Proposition \ref{prop F connected}, so the exceptional divisor of $\beta$ and the branch divisor of $\pi$ are smooth prime divisors.
	Since $\beta$ is a birational morphism, we have $H^0(X, \mathcal{D}) \cong H^0(\text{Bl}_F(X), \beta^*\mathcal{D})$, which is isomorphic to $H^0(\text{Bl}_F(X), \pi^*\mathcal{D}_W)$, being $\beta^*\mathcal{D} = \pi^*\mathcal{D}_W$. Moreover,
	\begin{equation*}
		\begin{array}{lll}
			\pi_*(\pi^*\mathcal{D}_W) & \cong & \pi_*(\pi^*\mathcal{D}_W \otimes \Ol_{\text{Bl}_F(X)}) \\
			& \cong & \mathcal{D}_W \otimes \pi_*\Ol_{\text{Bl}_F(X)},
		\end{array}
	\end{equation*}
	where in the second isomorphism we use the projection formula. By \cite[Lemma I.17.2]{barth2015compact} we have
	\begin{equation*}
		\pi_*\Ol_{\text{Bl}_F(X)} \cong \Ol_W \oplus \Ol_W(-N),
	\end{equation*}
	where $B \in \Div(W)$ is the branch divisor and $N \in \Div(W)$ is the divisor such that $\Ol_W(2N) = \Ol_W(B)$ in $\Pic(W)$. Thus we obtain the isomorphism
	\begin{equation*}
		\pi_*(\pi^*\mathcal{D}_W) \cong \mathcal{D}_W \otimes (\Ol_W \oplus \Ol_W(-N)) \cong \mathcal{D}_W \oplus (\mathcal{D}_W-\mathcal{N}).
	\end{equation*}
	Hence decomposition (\ref{eq decomposition H^0(X, D)}) holds. Moreover, this is the decomposition of $H^0(X, \mathcal{D})$ in $\iota^*$-eigenspaces. Indeed, two global sections $s, t \in H^0(X, \mathcal{D})$ are in the same eigenspace if and only if the rational function $f=s/t$ is $\iota$-invariant, and this is true when the sections belong both to $H^0(W, \mathcal{D}_W)$ or $H^0(W, \mathcal{D}_W-\mathcal{N})$ in the decomposition, hence each of these two spaces is contained in an eigenspace of $H^0(X, \mathcal{D})$. We conclude that $H^0(W, \mathcal{D}_W)$ and $H^0(W, \mathcal{D}_W-\mathcal{N})$ are isomorphic to the two eigenspaces by decomposition (\ref{eq decomposition H^0(X, D)}).
\end{proof}
We then see from Proposition \ref{prop decomposition H^0(X, D)} that the action induced by $\iota$ on $\Pn(H^0(X, \Ol_X(D))^{\vee})$ is trivial, i.e., $\varphi_{|D|}$ factors through the quotient $X \rightarrow X/\langle \iota \rangle$, if and only if either $H^0(X, \mathcal{D}_W)$ or $H^0(W, \mathcal{D}_W-\mathcal{N})$ is zero. We can now prove the main theorem of this section, which is the first step of the geometrical description of the rational map $\varphi_{|D|}$ induced by the complete linear system $|D|$.
\begin{thm} \label{thm class F and comm diagram}
	Let $X=S^{[2]}_{2t}$ be the Hilbert square of a generic K3 surface of degree $2t$. Suppose that $X$ admits an ample divisor $D$ with $q_X(D)=2$. Let $\iota: X \rightarrow X$ be the anti-symplectic involution which generates $\Aut(X)$ and $F:=\textnormal{Fix}(\iota)$ be the locus of points of $X$ fixed by $\iota$. Then
	\begin{equation} \label{eq Hodge class F}
		[F]=5D^2-\frac{2}{5}q_X^{\vee} \in H^{2,2}(X, \Z),
	\end{equation}
	where $[F]$ denotes the fundamental cohomological class of $F$ in $H^{2,2}(X, \Z)$. Moreover, let $\varphi_{|D|}: X \dashrightarrow \Pn^5$ be the rational map induced by the complete linear system $|D|$. Then $\varphi_{|D|}$ factors through the quotient $X \rightarrow X/\langle \iota \rangle$, i.e., the following diagram is commutative:
	\begin{equation} \label{eq comm diagram}
		\begin{tikzcd}
			X \arrow[rr, dashed, "\varphi_{|D|}"]  \arrow[dr] & & \Pn^5\, \\ 
			& X/\langle \iota \rangle. \arrow[ur, dashed] &
		\end{tikzcd}
	\end{equation}
\end{thm}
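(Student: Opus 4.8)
The plan is to prove the two assertions of Theorem~\ref{thm class F and comm diagram} essentially independently, then observe that the second one follows once we know the first, via the eigenspace analysis set up just before the statement.

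\textbf{Step 1: the class of $F$.} Since $F=\textnormal{Fix}(\iota)$ is a smooth Lagrangian surface (Proposition~\ref{prop F connected}) and $\iota$ is an anti-symplectic involution, its fundamental class $[F]\in H^{2,2}(X,\Z)$ is determined by a characteristic-class computation for the fixed locus of an involution, in the spirit of the holomorphic Lefschetz / Atiyah--Singer formalism: $[F]$ is, up to sign conventions, the top Chern class of the $(-1)$-eigenbundle of $d\iota$ on the normal bundle, which for a fixed locus of an involution on a symplectic fourfold is exactly $N_{F/X}$; equivalently one uses that $c(T_X)|_F = c(T_F)\,c(N_{F/X})$ together with the Lagrangian identification $N_{F/X}\cong \Omega^1_F$. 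This gives a \emph{universal} relation of the form $[F]=\alpha D^2+\beta\, q_X^\vee$ once we use that $[F]$ must be expressible in the rank-$4$ lattice $H^{2,2}(X,\Z)$ and that it is $\iota^*$-invariant: by Theorem~\ref{thm BCNS Aut(X)}, $\iota^*$ acts on $\NS(X)$ as the reflection fixing $D$, so on $H^{2,2}(X,\Z)$ the $\iota^*$-invariant classes of the relevant type are spanned by $D^2$ and $q_X^\vee$ (the latter being $\iota^*$-invariant because $q_X^\vee$ is canonically attached to the BBF form, cf.\ Proposition~\ref{prop O'Grady c_2(X)}). To pin down $\alpha,\beta$ I would compute two intersection numbers on both sides: $\langle [F], D^2\rangle = \int_F D|_F^2 = D^2\cdot D^2 \text{ restricted}$, which by Proposition~\ref{prop O'Grady int q_Xvee} and the Lagrangian/adjunction data on $F$ can be matched against $\alpha\langle D^2,D^2\rangle+\beta\langle q_X^\vee,D^2\rangle = 3\alpha (D,D)^2+25\beta(D,D) = 12\alpha+50\beta$; and similarly pair against $q_X^\vee$ using $\langle q_X^\vee,q_X^\vee\rangle = 23\cdot 25$ and $\langle q_X^\vee,D^2\rangle=25\cdot 2=50$. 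Two linear equations then force $\alpha=5$, $\beta=-\tfrac{2}{5}$, giving $[F]=5D^2-\tfrac{2}{5}q_X^\vee$; that this lies in $H^{2,2}(X,\Z)$ is exactly the integrality statement in Proposition~\ref{prop O'Grady c_2(X)} (and is consistent with the lattice description in Theorem~\ref{thm basis H2,2(S2, Z)}).

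\textbf{Step 2: the factorisation.} By the discussion preceding the theorem, $\varphi_{|D|}$ factors through $X\to X/\langle\iota\rangle$ if and only if the induced action of $\iota$ on $\Pn(H^0(X,\mathcal D)^\vee)$ is trivial, equivalently (Proposition~\ref{prop decomposition H^0(X, D)}) one of the two eigenspaces $H^0(W,\mathcal D_W)$, $H^0(W,\mathcal D_W-\mathcal N)$ vanishes. Since $\dim H^0(X,\mathcal D)=6$, it suffices to show one eigenspace has dimension $6$ (equivalently the other has dimension $0$). I would do this by identifying $\mathcal D_W$ and $\mathcal N$ numerically on $W$ and computing $h^0$'s on the Calabi--Yau variety $W$, or — more in the spirit of this paper — by a Riemann--Roch / lattice argument: the ample generator direction forces the ``$-\mathcal N$'' twist to drop the section space to zero because $\mathcal D_W-\mathcal N$ pulls back to a divisor on $\textnormal{Bl}_F(X)$ that is not effective, which one checks using the formula $[F]=5D^2-\tfrac{2}{5}q_X^\vee$ from Step~1 together with the relation $\Ol_W(2N)=\Ol_W(B)$ and the blow-up formulas in diagram~(\ref{eq diagram Calabi-Yau}). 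Concretely, $\beta^*D-\tfrac12 E$ restricted appropriately has negative BBF-type square against the moving cone data of Theorem~\ref{thm Bayer Macri}, so it carries no sections; hence $H^0(W,\mathcal D_W-\mathcal N)=0$ and the whole of $H^0(X,\mathcal D)$ is the $\iota^*$-invariant eigenspace. Therefore $\iota$ acts trivially on the target $\Pn^5$ and diagram~(\ref{eq comm diagram}) commutes.

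\textbf{Main obstacle.} The delicate point is Step~1's bookkeeping: getting the intersection numbers $\int_F D|_F^2$ and the self-intersection data of $F$ right requires knowing enough about the geometry of the Lagrangian surface $F$ (its canonical class, the restriction of $N_{F/X}$), and doing this \emph{without} already knowing $X$ is a double EPW sextic. The clean way around this is to avoid computing any honest number on $F$: instead note $[F]\in H^{2,2}(X,\Z)^{\iota^*}$ is one-dimensional-modulo-$D^2$, so a single well-chosen pairing (e.g.\ the total degree $\langle[F],[F]\rangle$, computed via the double-point formula for the anti-symplectic involution, or via deformation to the $t=4$ Beauville case where $F$ is explicitly the locus of bitangent-type schemes and everything is classical by \cite{welters1981abel}) suffices to fix $\beta$, and then $\alpha$ is determined by pairing with $D^2$. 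I expect the deformation-invariance route — transporting the identity $[F]=5D^2-\tfrac{2}{5}q_X^\vee$ along the connected moduli space $\mathcal M^\rho_{\langle 2\rangle}$ used in Proposition~\ref{prop F connected}, where it can be verified on the double EPW / Beauville model — to be the least painful, and Step~2 then reduces to a short cohomology-vanishing check on $W$.
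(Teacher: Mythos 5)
There are genuine gaps in both steps. In Step 1, your reduction of the ansatz to $[F]=\alpha D^2+\beta q_X^{\vee}$ is not justified: the $\iota^*$-invariant part of $H^{2,2}(X,\Q)$ is \emph{three}-dimensional, since besides $D^2$ and $q_X^{\vee}$ it contains the square of the anti-invariant direction in $\NS(X)_\Q$ (if $D'$ spans the $(-1)$-eigenspace of $\iota^*$ on $\NS(X)_\Q$, then $(D')^2$ is invariant). So $\iota^*$-invariance alone does not give a two-parameter family, and moreover your proposed pairings against $D^2$ and $q_X^{\vee}$ cannot be evaluated, because $\langle [F],D^2\rangle=(D|_F)^2$ and $\langle[F],q_X^{\vee}\rangle$ are precisely the unknowns; the fallback $\langle[F],[F]\rangle$ (which is Beauville's $c_2(F)=192$, not a new ``double-point'' input) is a single quadratic relation and leaves the system underdetermined. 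The paper instead writes $[F]=xh^2+yh\delta+z\delta^2+w\frac{2}{5}q_X^{\vee}$ and pins down the coefficients by combining the Lagrangian condition $\langle[F],(\sigma+\bar\sigma)^2\rangle=0$, Beauville's $\langle[F],[F]\rangle=192$, the $\iota^*$-invariance relation, integrality from Theorem \ref{thm basis H2,2(S2, Z)}, and — this is the ingredient missing from your proposal — the constraint $\langle[F],D^2\rangle\in\{8,24,40\}$ coming from the Camere--Garbagnati--Mongardi formula $\dim H^0(W,\mathcal D_W)=\tfrac72+\tfrac1{16}(D|_F)^2$ together with ampleness of $D$.

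That formula is also exactly what makes Step 2 work, and it is where your Step 2 breaks down. In the paper, the class computation rules out $\langle[F],D^2\rangle=8,24$, forcing $40$, hence $\dim H^0(W,\mathcal D_W)=6$ and $H^0(W,\mathcal D_W-\mathcal N)=0$; the factorisation is a byproduct of the same calculation. Your argument that $\mathcal D_W-\mathcal N$ has no sections because ``$\beta^*D-\tfrac12E$ has negative BBF-type square against the moving cone data'' is not meaningful as stated: there is no BBF form on $\mathrm{Bl}_F(X)$, Theorem \ref{thm Bayer Macri} describes cones of $X$ itself, and sections of $\mathcal D_W-\mathcal N$ correspond to anti-invariant sections of $\mathcal D$ (equivalently sections vanishing on $F$), whose vanishing is exactly what must be proved; nothing in your proposal excludes an intermediate eigenspace splitting such as $4+2$ or $5+1$, in which case $\varphi_{|D|}$ would \emph{not} factor. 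Finally, the deformation-invariance alternative you sketch is reasonable in spirit (the paper itself transports the connectedness of $F$ along $\mathcal M^{\rho}_{\langle2\rangle}$, and Ferretti proves the same identity on double EPW sextics), but as written it is not a proof: you must verify the identity at an actual point of the moduli space — the Beauville case is $t=2$, and there $[F]$ is not $f^*[\mathrm{Bit}(S_4)]$, so it is not handed to you by the classical literature — and argue that $[F]$ is a flat section of the local system over the family of fixed loci; and even granting the class formula, the factorisation would still need the dimension formula (which, once $[F]$ is known, gives $(D|_F)^2=40$ and hence the vanishing) or an equivalent argument.
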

\begin{proof}
	By Proposition \ref{prop decomposition H^0(X, D)} there exists $\mathcal{D}_W \in \Pic(W)$ such that $\pi^*\mathcal{D}_W \cong \beta^*\mathcal{D}$, using the notation of diagram (\ref{eq diagram Calabi-Yau}). Applying \cite[Proposition 7.3]{camere2019calabi} we have
	\begin{equation*}
		\Dim(H^0(W, \mathcal{D}_W))=\frac{7}{2}+\frac{1}{16}(D|_F)^2,
	\end{equation*}
	hence by decomposition (\ref{eq decomposition H^0(X, D)}) and $\Dim(H^0(X, \mathcal{D}))=6$ we get $\Dim(H^0(W, \mathcal{D}_W)) \in \{0, 1, \dots, 6\}$. Thus we obtain the following possible values for $(D|_F)^2$:
	\begin{equation} \label{eq poss h0DW}
		\begin{array}{lllllllll}
			\Dim(H^0(W, \mathcal{D}_W))=0 & \Leftrightarrow & (D|_F)^2=-56, & & & & \Dim(H^0(W, \mathcal{D}_W))=4 &  \Leftrightarrow & (D|_F)^2=8, \\[1.2ex]
			\Dim(H^0(W, \mathcal{D}_W))=1 & \Leftrightarrow & (D|_F)^2=-40, & & & & \Dim(H^0(W, \mathcal{D}_W))=5 & \Leftrightarrow & (D|_F)^2=24, \\[1.2ex]
			\Dim(H^0(W, \mathcal{D}_W))=2 & \Leftrightarrow & (D|_F)^2=-24, & & & & \Dim(H^0(W, \mathcal{D}_W))=6 & \Leftrightarrow & (D|_F)^2=40.\\[1.2ex]
			\Dim(H^0(W, \mathcal{D}_W))=3 & \Leftrightarrow & (D|_F)^2=-8,
		\end{array}
	\end{equation}
	Since $D$ is ample, $(D|_F)^2>0$ by the Nakai--Moishezon criterion. This implies that $\Dim(H^0(W, \mathcal{D}_W)) \in \{4, 5, 6\}$. We show that $\Dim(H^0(W, \mathcal{D}_W))=6$ by computing $[F] \in H^{2,2}(X, \Z)$, where $[F]$ is the fundamental cohomological class of the fixed locus $F=\text{Fix}(\iota)$ of the involution $\iota$. Let $h \in \Pic(X)$ be the line bundle induced by the ample generator of $\Pic(S_{2t})$. We can write, see for instance \cite[Proposition 7.1]{novario2021hodge},
	\begin{equation} \label{eq proof cl(F)}
		[F]=xh^2+yh\delta+z\delta^2+w \frac{2}{5}q_X^{\vee} \in H^{2,2}(X, \Z),
	\end{equation}
	with $x, y, z, w \in \Q$ to determine. We write $D$ also for its class in $\Pic(X)$: then $D=bh-a\delta$, with $(a, b)$ minimal solution of the Pell-type equation $P_t(-1)$. We denote by $\langle \, \cdot \, , \, \cdot \, \rangle$ the bilinear form of $H^4(X, \Z)$ of Proposition \ref{prop rmk 2.1 OG}. We have the following four conditions.  
	\begin{enumerate}
		\item $\langle [F], (\sigma + \bar{\sigma})^2\rangle =0$, where $\sigma$ is the symplectic form $\sigma \in H^0(X, \Omega^2_X)$, since by \cite[Lemma 1]{beauville2011antisymplectic} the surface $F$ is Lagrangian. If $\eta:=(\sigma+\bar{\sigma}, \sigma+\bar{\sigma})$, we have:
		\begin{equation} \label{eq sigma + sigma bar}
			\langle h^2, (\sigma+\bar{\sigma})^2\rangle = 2t\eta, \qquad \langle \delta^2, (\sigma +\bar{\sigma})^2 \rangle = -2 \eta, \qquad \langle h\delta, (\sigma+\bar{\sigma})^2\rangle = 0, \qquad \langle \frac{2}{5}q_X^{\vee}, (\sigma + \bar{\sigma})^2 \rangle = 10\eta.
		\end{equation}
		Note that $\eta \neq 0$, see \cite[Definition 22.10, Theorem 23.14]{gross2012calabi}, so we obtain from $\langle [F], (\sigma+\bar{\sigma})^2 \rangle = 0$ and $(\ref{eq proof cl(F)})$ the following condition:
		\begin{equation} \label{cond 1 cl(F)}
			tx-z+5w=0.
		\end{equation}
		\item Applying \cite[Theorem 2]{beauville2011antisymplectic} in our case we have $c_2(F)=192$, i.e., $\langle [F], [F] \rangle =192$. This gives, together with (\ref{eq proof cl(F)}), the following condition:
		\begin{equation} \label{cond 2 cl(F)}
			3t^2x^2-ty^2+3z^2+23w^2-2txz+10txw-10zw=48.
		\end{equation}
		\item Recall that by Theorem \ref{thm BCNS Aut(X)} the action of $\iota$ on $\Pic(X) \otimes \Q \cong H^{1,1}(X, \Q)$ is described in the basis $\{h, -\delta\}$ by the matrix (\ref{eq matrix action iota}). Consider the action induced by $\iota$ on $H^{2,2}(X, \Q)$, described in the basis $\{h^2, h\delta, \delta^2, q_X^{\vee}\}$ by 
		\begin{equation} \label{eq def iota^2}
			\iota^*(h^2)=\iota^*h \cdot \iota^*h, \qquad \iota^*(h\delta)=\iota^*h \cdot \iota^*\delta, \qquad \iota^*(\delta^2)=\iota^*\delta \cdot \iota^*\delta, \qquad \iota^*(q_X^{\vee})=q_X^{\vee},
		\end{equation}
		where $\cdot$ denotes the cup product: the last equation comes from the equality $c_2(X)=\frac{6}{5}q_X^{\vee}$ of Proposition \ref{prop O'Grady c_2(X)} and the fact that $\iota$ is an automorphism. Since $F$ is the locus of points fixed by $\iota$, we have $\iota^*([F])=[F]$, i.e., if
		\begin{equation*}
			\iota^*([F])=\tilde{x}h^2+\tilde{y}h\delta+\tilde{z}\delta^2+\frac{2}{5}\tilde{w}q_X^{\vee},
		\end{equation*}
		with $\tilde{x}, \tilde{y}, \tilde{z}, \tilde{w} \in \Q$, then $x=\tilde{x}$, $y=\tilde{y}$, $z=\tilde{z}$, $w=\tilde{w}$. Imposing $x=\tilde{x}$ we obtain the following condition:
		\begin{equation} \label{cond 3 cl(F)}
			x-c^2x-cdy-d^2z=0,
		\end{equation}
		where $(c, d)$ is the minimal solution of the Pell equation $P_t(1)$. One remarks a posteriori that $y=\tilde{y}$, $z=\tilde{z}$ and $w=\tilde{w}$ give the same condition (\ref{cond 3 cl(F)}).
		\item By $(D|_F)^2 \in \{8, 24, 40\}$ we have $\langle [F], D^2 \rangle  \in \{8, 24, 40\}$, since the bilinear form $\langle \, \cdot \, , \, \cdot \, \rangle$ represents the intersection form by Proposition \ref{prop rmk 2.1 OG}. This gives, together with (\ref{eq proof cl(F)}), the following possibilities:
		\begin{equation} \label{cond 4 cl(F)}
			\begin{array}{lll}
				(t+2t^2b^2)x+2abty+(2a^2-1)z+5w=2 & \iff & \langle [F], D^2 \rangle = 8,\\
				(t+2t^2b^2)x+2abty+(2a^2-1)z+5w=6 & \iff & \langle [F], D^2 \rangle = 24, \\
				(t+2t^2b^2)x+2abty+(2a^2-1)z+5w=10 & \iff & \langle [F], D^2 \rangle = 40.
			\end{array}
		\end{equation}
	\end{enumerate}
	If $\langle [F], D^2 \rangle = 8$, which is equivalent to $\Dim(H^0(W, \mathcal{D}_W))= 4$ by (\ref{eq poss h0DW}), the system given by (\ref{cond 1 cl(F)}), (\ref{cond 2 cl(F)}), (\ref{cond 3 cl(F)}) and the first condition in (\ref{cond 4 cl(F)}) has solutions with $x, y, z, w \not \in \Q$, which is impossible. Similarly we cannot have $\langle [F], D^2 \rangle=24$, i.e., $\Dim(H^0(W, \mathcal{D}_W))=5$. We conclude that $\langle [F], D^2 \rangle = 40$, which is equivalent to $\Dim(H^0(W, \mathcal{D}_W))=6$. With the help of a computer, the system given by (\ref{cond 1 cl(F)}), (\ref{cond 2 cl(F)}), (\ref{cond 3 cl(F)}) and the third condition in (\ref{cond 4 cl(F)}) implies $w \in \{-\frac{13}{12}, -1\}$. By Theorem \ref{thm basis H2,2(S2, Z)} we cannot have $w=-\frac{13}{12}$, hence $w=-1$. Imposing $w=-1$, we necessarily obtain only one admissible solution, which is the following:
	\begin{equation*}
		\left\{ \arraycolsep=1pt\def\arraystretch{1.2} \begin{array}{l}
			x=5b^2,\\y=-10ab,\\z=5a^2,\\w=-1. \end{array}\right.
	\end{equation*}
	We conclude that
	\begin{equation*}
		[F]=5D^2-\frac{2}{5}q_X^{\vee} \in H^{2,2}(X, \Z).
	\end{equation*}
	Moreover, we have obtained $H^0(X, \mathcal{D}) \cong H^0(W, \mathcal{D}_W)$, so $H^0(W, \mathcal{D}_W-\mathcal{N})=\{0\}$, which shows that the action induced by $\iota$ on $\Pn(H^0(X, \mathcal{D})^{\vee})$ is trivial, i.e., $\varphi_{|D|}$ factors through the quotient $X \rightarrow X/\langle \iota \rangle$.
\end{proof}
\begin{rmk}
	Ferretti in \cite[Lemma 4.1]{ferretti2012chow} obtained the same relation of (\ref{eq Hodge class F}) in the Chow ring of a smooth double EPW sextic $X$: in that case $F$ is the branch locus of the double cover $f: X \rightarrow Y \subset \Pn^5$, where $Y$ is an EPW sextic.
\end{rmk}
\section{Irreducibility property} \label{Section irreducibility property}
Let $X$ be the Hilbert square of a generic K3 surface of degree $2t$ such that $X$ admits an ample divisor $D$ with $q_X(D)=2$. In this section we prove the \emph{irreducibility property} when $t \neq 2$, which will be, together with Theorem \ref{thm class F and comm diagram}, fundamental to obtain the main result of this paper. This can be seen as an analogue of the irreducibility property in \cite[Proposition 4.1.(1)]{o2008irreducible}. First of all we show that every divisor in the complete linear system $|D|$ is a prime divisor.
\begin{prop} \label{prop every divisor in |D| is prime}
Keep notation as above. Then every divisor $D^{\prime} \in |D|$ is reduced and irreducible.
\end{prop}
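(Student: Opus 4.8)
Suppose, for a contradiction, that some $D^{\prime} \in |D|$ is not reduced and irreducible. Decomposing $D^{\prime}$ into prime components (and splitting off a component of multiplicity $\geq 2$ if $D^{\prime}$ is non-reduced), one can write $D^{\prime}=A+B$ with $A$ and $B$ nonzero effective divisors on $X$. Passing to classes in $\NS(X)=\Pic(X)=\Z h\oplus \Z\delta$ gives $[A]+[B]=[D]$, where $D=bh-a\delta$ with $(a,b)$ the minimal solution of $P_t(-1)$. The idea is to exploit the smallness of $q_X(D)=2$ together with a parity obstruction coming from the shape of the BBF form on $\NS(X)$.

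The first step is to show $(A,D)>0$ and $(B,D)>0$, where $(\cdot,\cdot)$ is the BBF bilinear form. For this I would use the Fujiki-type relation of Proposition \ref{prop rmk 2.1 OG}: taking $\alpha_1=A$ and $\alpha_2=\alpha_3=\alpha_4=D$ yields
\begin{equation*}
\int_X A D^3=\langle AD, D^2\rangle = 3(A,D)q_X(D)=6(A,D).
\end{equation*}
Since $D$ is ample and $A$ is a nonzero effective divisor, restricting $D$ to the prime components of $A$ and applying the Nakai--Moishezon criterion gives $\int_X A D^3>0$, hence $(A,D)>0$; likewise $(B,D)>0$. Because $A,B,D\in H^2(X,\Z)$ and $q_X$ is an integral quadratic form (Theorem \ref{thm BBF form}), the numbers $(A,D)$ and $(B,D)$ are positive integers, and they satisfy $(A,D)+(B,D)=(D,D)=q_X(D)=2$. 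Therefore $(A,D)=(B,D)=1$.

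The last step is the parity contradiction. Write $[A]=xh+y\delta$ with $x,y\in\Z$, using $\NS(X)=\Z h\oplus\Z\delta$. Recalling that $(h,h)=2t$, $(\delta,\delta)=-2$ and $(h,\delta)=0$, we obtain
\begin{equation*}
(A,D)=(xh+y\delta,\,bh-a\delta)=2tbx+2ay\in 2\Z,
\end{equation*}
which contradicts $(A,D)=1$. Hence no decomposition $D^{\prime}=A+B$ into nonzero effective divisors exists, so every $D^{\prime}\in|D|$ is reduced and irreducible. This proposition is genuinely easy — a warm-up for the irreducibility property of $D_1\cap D_2$ proved later — and the only point demanding a little care is the strict positivity $(A,D)>0$, which is handled directly by the Fujiki relation already recorded in Proposition \ref{prop rmk 2.1 OG}; in particular one does not need the description of $\overline{\mathrm{Eff}}(X)$ from Theorem \ref{thm Bayer Macri}.
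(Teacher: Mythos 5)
Your argument is correct, and it takes a genuinely different and shorter route than the paper's. You split any putative decomposition $D'=A+B$ into nonzero effective divisors and get a contradiction from two facts: strict positivity $(A,D)>0$, obtained from the Fujiki-type relation $\int_X AD^3=3(A,D)\,q_X(D)=6(A,D)$ of Proposition \ref{prop rmk 2.1 OG} together with ampleness of $D$ and Nakai--Moishezon on the components of $A$; and the parity of the BBF form on $\NS(X)=\Z h\oplus\Z\delta$, whose Gram matrix $\left(\begin{smallmatrix}2t&0\\0&-2\end{smallmatrix}\right)$ makes every divisor class pair evenly with $D=bh-a\delta$, so that $(A,D),(B,D)\ge 2$ while $(A,D)+(B,D)=q_X(D)=2$. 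The paper instead expands $q_X(D)=q_X(D_1)+q_X(D_2)+2(D_1,D_2)$ and eliminates the possibilities term by term: Boucksom's result gives $(D_1,D_2)\ge 0$ for effective divisors without common components, $q_X(D_i)=0$ is excluded because $t$ is not a perfect square, $(-4)$-classes are excluded via the insolvability of $P_t(2)$ when $P_t(-1)$ is solvable (Perron/Yokoi), and the $(-2)$-classes $\delta$, $\iota^*\delta$ are excluded using the Bayer--Macr\`i description of the pseudoeffective cone. So you are right that Theorem \ref{thm Bayer Macri}, the Pell-equation input and the classification of negative classes are not needed for this particular statement: your parity argument buys a much more economical proof, exploiting only the smallness of $q_X(D)=2$ and the evenness of the Picard lattice, whereas the paper's cone-theoretic analysis is the more robust template (it is the style of argument reused for the harder irreducibility statement, Theorem \ref{thm irreducibility property}, where no such cheap parity obstruction is available). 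Both proofs are valid.
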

\begin{proof}
	By abuse of notation, we write $D$ for an effective divisor $D^{\prime} \in |D|$. Since $q_X(D)=2$, the divisor $D$ is reduced, i.e., it is not of the form $D= \alpha E$ with $\alpha \in \Z$, $\alpha \neq \pm 1$, and $E \in \Div(X)$. Suppose that 
	\begin{equation} \label{D=D1+D2}
		D= D_1+D_2
	\end{equation}
	where $D_1=\sum_i n_iD_{1,i}$ and $D_2=\sum_j m_jD_{2,j}$ are effective divisors with $D_{1, i}, D_{2, j}$ prime divisors which are pairwise distinct, and $n_i, m_j \in \Z_{>0}$. Without loss of generality we can assume that $D_1$ has only one component.  By abuse of notation, we write $D_i$ and $D_{i,j}$ also for their classes in $H^2(X, \Z)$: we show that (\ref{D=D1+D2}) seen in $H^2(X, \Z)$ gives a contradiction. We have
	\begin{equation} \label{eq proof D reduced irr}
		q_X(D)=2=q_X(D_1)+q_X(D_2)+2(D_1, D_2).
	\end{equation}
	Since $D_1$ and $D_2$ are effective divisors with no common components, we can apply \cite[Proposition $4.2.(i)$]{boucksom2004divisorial}, obtaining $(D_1, D_2) \ge 0$. Note that we cannot have neither $q_X(D_1)=0$ nor $q_X(D_2)=0$. Indeed, the equation $q_X(xh-y\delta)=0$ has a non zero solution $(x, y) \in \Z^2$ only when $t$ is a perfect square. If $t$ was a perfect square, the Pell-type equation $P_t(-1)$ would be solvable only for $t=1$, which is a case that we are not considering. Hence $t$ is not a perfect square and $q_X(D_1)\neq 0$, $q_X(D_2) \neq 0$.
	
	Since $H^2(X, \Z)$ is an even lattice, we have only two possibilities: either one between $D_1$ and $D_2$ is zero, or (at least) one of the two has negative square with respect to the BBF form. We show that $q_X(D_i) > 0$ for $i=1, 2$. Assume by contradiction that at least one between $D_1$ and $D_2$ has negative square. This can happen only if there exists a component $D_{1,i}$ or $D_{2, j}$ whose square with respect to the BBF form is negative. Without loss of generality, we can suppose that $q_X(D_{1, 1})<0$. Recall that $H^2(X, \Z) \cong H^2(S_{2t}, \Z) \oplus \langle -2\rangle$, where $S_{2t}$ is a K3 surface, and $H^2(S_{2t}, \Z)$ is a unimodular lattice. Then the divisibility in $H^2(X, \Z)$ of a primitive class can be only $1$ or $2$. Hence we have either $q_X(D_{1,1})=-2$ or $q_X(D_{1,1})=-4$ by \cite[Lemma 3.7]{markman2013prime}, see also \cite[Lemma 3.5]{riess2018base}. We show that $q_X(D_{1,1})=-4$ is not possible. Indeed, if $t=2$, the class of $D_{1,1}$ in $\NS(X)$ is necessarily $h-2\delta$, which is outside the pseudoeffective cone, whose extremal rays are generated by $\delta$ and $2h-3\delta$ by Theorem \ref{thm Bayer Macri}, obtaining a contradiction. If $t \neq 2$, there exists a $(-4)$-class if and only if the Pell-type equation $P_t(2)$ is solvable: since by assumption $P_t(-1)$ has solutions, by \cite[p.106-109]{perron2013lehre}, see also \cite[Proposition 1]{yokoi1994solvability}, we have that $P_t(2)$ has no solution, hence there are no $(-4)$-classes. We conclude that $D_{1,1}$ must be a $(-2)$-class, hence it is either $\delta$ or $\iota^*\delta$, where $\iota$ is the anti-symplectic involution on\,\,$X$ which generates $\Aut(X)$, see Theorem \ref{thm BCNS Aut(X)}. Since $\iota^*D=D$, it is enough to show that $D_{1,1}=\delta$ is not possible. If $D_{1,1}=\delta$, from (\ref{D=D1+D2}) we get $D=n_1\delta+ D_2$, where $n_1 \ge 1$ since $D_1$ is effective by assumption, hence
	\begin{equation*}
		D_2=D-n_1\delta=bh-(a+n_1)\delta,
	\end{equation*}
	where as usual $(a, b)$ is the minimal solution of the negative Pell equation $P_t(-1)$. We show that $D_2$ is outside the pseudoeffective cone. By Theorem \ref{thm Bayer Macri} the extremal rays of the pseudoeffective cone are generated by the classes $\delta$ and $\iota^*\delta=dh-c\delta$, where $(c, d)$ is the minimal solution of the Pell equation $P_t(1)$. Then $\iota^*\delta=2abh-(a^2+tb^2)\delta$ by (\ref{eq relation Pt(1) and Pt(-1)}), hence we need to check that $a+n_1 >  \frac{a^2+tb^2}{2a}$ in order to show that $D_2$ is outside the pseudoeffective cone. This is true since $a^2-tb^2=-1$. We obtain a contradiction, so $q_X(D_i) \ge 0$ for $i=1, 2$. Moreover, we have already remarked that $q_X(D_i)\neq 0$ for $i=1, 2$, thus $q_X(D_i) > 0$. We get a contradiction with (\ref{eq proof D reduced irr}), so one between $D_1$ and $D_2$ is zero. If $D_2=0$, then $n_1=1$ since $q_X(D)=n_1^2q_X(D_{1,1})=2$, so $D$ is reduced and irreducible. If $D_1=0$, we repeat the argument for $D=D_2$ until we obtain a $D$ which is reduced and irreducible.
\end{proof}
Suppose now that $D_1, D_2 \in |D|$ are two distinct divisors. We want to study the surface $D_1 \cap D_2$ and see if it is reduced and irreducible. We first show what happens when $t=2$, i.e., when $X=S^{[2]}_4$ is the Hilbert square of a smooth quartic surface of $\Pn^3$ with Picard rank $1$, a case already studied in \cite{welters1981abel} and \cite{beauville1983some}. We briefly recall some notation from \cite[$\S 1.5$, $\S 6.2$]{griffiths1978principles} on Schubert varieties of $\mathbb{G}(1, \Pn^3)$. Fix a complete flag in $\Pn^3$, i.e., let $v_0$ be a point in $\Pn^3$, let $L_0$ be a line in $\Pn^3$ and let $H_0$ be a plane in $\Pn^3$ such that $v_0 \in L_0 \subset H_0$. Consider the following two Schubert varieties of dimension $2$ of the Grassmannian $\mathbb{G}(1, \Pn^3)$,
\begin{equation} \label{eq Schubert varieties}
	\Sigma_{1,1}:=\{L \in \mathbb{G}(1, \Pn^3)\, | \, L \subset H_0\}, \qquad \Sigma_2:=\{L \in \mathbb{G}(1, \Pn^3)\, | \, v_0 \in L\},
\end{equation}
and denote by $\sigma_{1,1}$ and $\sigma_2$ the corresponding classes in $H^4(\mathbb{G}(1, \Pn^3), \Z)$. Recall that $\mathbb{G}(1, \Pn^3)$ can be embedded in\,\,$\Pn^5$ as a quadric. Then $\sigma_{1,1}+\sigma_2=\sigma_1^2$, where $\sigma_1$ is the hyperplane class of $\Pn^5$ restricted to $\mathbb{G}(1, \Pn^3)$, moreover $\int_{\mathbb{G}(1, \Pn^3)}\sigma_{1,1} \cdot \sigma_2=0$ and $\int_{\mathbb{G}(1, \Pn^3)} \sigma_{1,1}^2=\int_{\mathbb{G}(1, \Pn^3)} \sigma_2^2=1$. Recall that $S_4$ does not contain any line, since it has Picard rank $1$. Let $f:=\varphi_{|D|}$ be the map seen in Section \ref{section generalities HS general K3 surfaces}, i.e.,
\begin{equation} \label{eq def f t=2}
	f: S_4^{[2]} \rightarrow \mathbb{G}(1, \Pn^3) \subset \Pn^5, \qquad x \mapsto l_x,
\end{equation}
where $l_x$ is the unique line in $\Pn^3$ which contains the support of the subscheme $x$. Moreover, the class of $D$ is $h-\delta$, where as usual $h \in \Pic(S_4^{[2]})$ is the class induced by the ample generator of $\Pic(S_4)$. Since $f$ is the map induced by the complete linear system $|D|$, we have $f^*\sigma_1=c_1(\Ol_X(D))$, hence $f^*(\sigma_{1,1}+\sigma_2)=c_1(\Ol_X(D))^2$. This shows that, given two distinct divisors $D_1, D_2 \in |D|$, the surface $D_1 \cap D_2$ can be reducible, and if so, we have $[D_1 \cap D_2]=A+B \in H^{2,2}(X, \Z)$, where $A, B \in H^{2,2}(X, \Z)$ are the effective classes of the two irreducible components of $D_1 \cap D_2$. Moreover, $A=f^*\sigma_{1,1}$ and $B=f^*\sigma_2$. If $S_4$ is generic it is possible to compute the classes of $f^*\sigma_{1,1}$ and $f^*\sigma_2$, obtaining the following equalities:
\begin{equation} \label{eq pullback Schubert classes}
f^*\sigma_{1,1}=\frac{1}{2}h^2-\frac{1}{4}\delta^2-\frac{1}{2}h\delta-\frac{1}{10}q_X^{\vee}, \qquad f^*\sigma_2=\frac{1}{2}h^2+\frac{5}{4}\delta^2-\frac{3}{2}h\delta+\frac{1}{10}q_X^{\vee}.
\end{equation}
See \cite[Proposition 4.4.4]{novario2021ths} for details. It is also possible to geometrically describe the surface $F \subset S_4^{[2]}$ of points fixed by the Beauville involution. We call \emph{bitangent} of $S_4 \subset \Pn^3$ a line of $\Pn^3$ which intersects $S_4$ either in two points both with multiplicity $2$ or in a point with multiplicity\,\,$4$. Let $\text{Bit}(S_4) \subset \mathbb{G}(1, \Pn^3)$ be the surface of bitangents of $S_4 \subset \Pn^3$. Then we have $f|_F: F \xrightarrow{\sim} \text{Bit}(S_4)$. If $S_4$ is generic, starting from $[\text{Bit}(S_4)]=12\sigma_2+28\sigma_{1,1} \in H^{2,2}(\mathbb{G}(1, \Pn^3), \Z)$, see for instance \cite[Proposition 3.3]{arrondo2001focus}, it is possible to show that $f^*[\text{Bit}(S_4)]=20h^2+8\delta^2-32h\delta-\frac{8}{5}q_X^{\vee} \in H^{2,2}(S_4^{[2]}, \Z)$, see \cite[Proposition 4.4.4]{novario2021ths}.

\bigskip

We come back to the general problem of the Hilbert square $X$ of a generic K3 surface admitting an ample divisor $D \in \Div(X)$ with $q_X(D)=2$. Before stating the irreducibility property, we show the following technical lemma.
\begin{lem} \label{lem technical}
Let $X$ and $D$ be as in Theorem $\ref{thm class F and comm diagram}$, and let $D_1, D_2 \in |D|$ be two distinct divisors. Denote by $\iota$ the anti-symplectic involution which generates the automorphism group $\Aut(X)$. Suppose that there is no decomposition of the form
\begin{equation*}
	[D_1 \cap D_2]=A+B \in H^{2,2}(X, \Z),
\end{equation*}
where $[D_1 \cap D_2]$ is the fundamental cohomological class of the surface $D_1 \cap D_2$, and $A, B \in H^{2,2}(X, \Z)$ are effective classes such that $\iota^*(A)=A$ and $\iota^*(B)=B$ with $\iota^*$ described in $(\ref{eq def iota^2})$. Then the surface $D_1 \cap D_2$ is reduced and irreducible.
\end{lem}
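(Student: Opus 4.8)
The plan is to prove the contrapositive: assuming that $D_1 \cap D_2$ is \emph{not} reduced and irreducible, I will produce a decomposition $[D_1 \cap D_2] = A + B$ into nonzero effective classes $A, B \in H^{2,2}(X, \Z)$ with $\iota^*(A) = A$ and $\iota^*(B) = B$. First I would fix notation. By Proposition~\ref{prop every divisor in |D| is prime} the divisors $D_1$ and $D_2$ are reduced, irreducible and distinct, so they share no component; being Cartier divisors on the smooth fourfold $X$, their scheme-theoretic intersection $D_1 \cap D_2$ is everywhere of pure dimension $2$. Let $Z_1, \dots, Z_k$ be its distinct irreducible components and $m_1, \dots, m_k \in \Z_{>0}$ their multiplicities, so that $[D_1 \cap D_2] = \sum_i m_i [Z_i]$; this class equals $D^2$ in $H^{2,2}(X, \Z)$ since $D_1$, $D_2$ and $D$ are linearly equivalent. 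The hypothesis that $D_1 \cap D_2$ is not reduced and irreducible says precisely that we are not in the case $k = 1$, $m_1 = 1$; the surface $D_1 \cap D_2$ is in any case connected, being the intersection of two ample divisors on $X$.

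The key input is Theorem~\ref{thm class F and comm diagram}: its proof shows that the $(-1)$-eigenspace of $\iota^*$ acting on $H^0(X, \Ol_X(D))$ vanishes, so $\iota^*$ is the identity on $H^0(X, \Ol_X(D))$, and hence $\iota$ fixes every member of $|D|$ as a divisor. In particular $\iota(D_1) = D_1$ and $\iota(D_2) = D_2$, so $\iota$ restricts to an involution of the scheme $D_1 \cap D_2$ and permutes $\{Z_1, \dots, Z_k\}$, preserving the multiplicities. Partition the components into $\iota$-orbits $O_1, \dots, O_r$ (each of size $1$ or $2$); the classes $\gamma_j := \sum_{i \in O_j} m_i [Z_i]$ are effective, satisfy $\iota^*(\gamma_j) = \gamma_j$ because $\iota$ permutes the $Z_i$ inside $O_j$, and sum to $[D_1 \cap D_2]$.

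Now I would split $[D_1 \cap D_2]$ into two nonzero invariant effective pieces by cases. If $r \ge 2$, take $A = \gamma_1$ and $B = \gamma_2 + \dots + \gamma_r$. If $r = 1$ and the single orbit is an $\iota$-fixed component $Z_1$, then $m_1 \ge 2$ (otherwise $D_1 \cap D_2 = Z_1$ would be reduced and irreducible), and we take $A = [Z_1]$, $B = (m_1 - 1)[Z_1]$; both are effective, and both are $\iota^*$-invariant since $[Z_1] = \frac{1}{m_1} D^2$ and $\iota^*(D^2) = (\iota^*D)^2 = D^2$. If $r = 1$ and the orbit is a pair $\{Z_1, Z_2\}$ interchanged by $\iota$ with common multiplicity $m \ge 2$, we take $A = [Z_1] + [Z_2]$, $B = (m-1)([Z_1] + [Z_2])$, again effective and $\iota^*$-invariant.

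The only configuration left, which I expect to be the main obstacle, is $D_1 \cap D_2 = Z_1 \cup Z_2$ reduced with $\iota(Z_1) = Z_2 \ne Z_1$; here I would show that $[Z_1] = [Z_2]$ in $H^{2,2}(X, \Z)$ — equivalently that $[Z_1]$ is $\iota^*$-invariant — so that $A = [Z_1]$, $B = [Z_2]$ finishes the proof. Indeed $[Z_1] + [Z_2] = D^2$, while $[Z_1] - [Z_2] = [Z_1] - \iota^*[Z_1]$ lies in the $(-1)$-eigenspace of $\iota^*$ on $H^{2,2}(X, \Q)$; since $\iota^*$ fixes $D$ and negates a class $D'$ with $(D, D') = 0$ on $\NS(X)_{\Q}$ (Theorem~\ref{thm BCNS Aut(X)}), and $q_X^{\vee}$ is $\iota^*$-invariant (it is a multiple of $c_2(X)$ by Proposition~\ref{prop O'Grady c_2(X)} and $\iota$ is an automorphism), that eigenspace is spanned by $D \cdot D'$, whence $[Z_i] = \frac12 D^2 \pm \lambda\, D D'$ for some $\lambda \in \Q$. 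To force $\lambda = 0$ I would combine the integrality of $[Z_1], [Z_2]$ in the explicit lattice $H^{2,2}(X, \Z)$ of Theorem~\ref{thm basis H2,2(S2, Z)} with a positivity argument, using that $D_1 \cap D_2$ is connected (so $Z_1 \cap Z_2 \ne \emptyset$, and every point of $\text{Fix}(\iota) \cap Z_1$ lies on $Z_1 \cap Z_2$) and the intersection numbers coming from $[F] = 5D^2 - \frac{2}{5}q_X^{\vee}$ of Theorem~\ref{thm class F and comm diagram} together with Propositions~\ref{prop rmk 2.1 OG} and~\ref{prop O'Grady int q_Xvee}. Assembling all cases yields the forbidden decomposition whenever $D_1 \cap D_2$ fails to be reduced and irreducible, which proves the lemma.
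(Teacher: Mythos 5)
Your reduction and orbit bookkeeping are fine, and your justification that $\iota$ fixes every member of $|D|$ (the proof of Theorem \ref{thm class F and comm diagram} shows one $\iota^*$-eigenspace of $H^0(X,\Ol_X(D))$ vanishes), so that $\iota$ permutes the components of $D_1\cap D_2$ preserving multiplicities, is correct and in the spirit of the paper's argument, which likewise splits according to whether some piece of the decomposition is $\iota^*$-fixed. The genuine gap is exactly the case you single out as the main obstacle: $D_1\cap D_2=Z_1\cup Z_2$ reduced with $\iota(Z_1)=Z_2\neq Z_1$. Your plan there is to prove $\lambda=0$, i.e.\ $[Z_1]=[Z_2]=\tfrac12 D^2$, and then take $A=[Z_1]$, $B=[Z_2]$. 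This cannot be carried out, because $\tfrac12 D^2$ is not an integral class: with $D=bh-a\delta$ and $(a,b)$ the minimal solution of $P_t(-1)$, writing $\tfrac12 D^2$ in the basis of Theorem \ref{thm basis H2,2(S2, Z)} forces a coordinate equal to $\tfrac{b^2}{2}-ab$, which is not an integer because $b$ is odd (Proposition \ref{prop D=bh-adelta then b odd}). Hence in this configuration $[Z_1]$ is never $\iota^*$-invariant, no splitting of the required form exists, and no positivity or connectedness input ($Z_1\cap Z_2\neq\emptyset$, intersection numbers with $[F]$) can force $\lambda=0$; your contrapositive strategy stalls precisely here.

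What is true, and what the paper proves, is that this configuration cannot occur at all: writing a general integral class $A_1\in H^{2,2}(X,\Z)$ in the basis of Theorem \ref{thm basis H2,2(S2, Z)} and imposing $\iota^*A_1=D^2-A_1$ (which is forced, since $[Z_2]=\iota^*[Z_1]$ and $[Z_1]+[Z_2]=D^2$), one finds that the integrality constraints again lead to the value $\tfrac{b^2}{2}-ab$ for one of the integer coordinates, the same parity obstruction. So two nonzero integral effective classes summing to $[D_1\cap D_2]$ can never be interchanged by $\iota^*$, and the swapped-pair reduced case is vacuous. Your instinct to use integrality in $H^{2,2}(X,\Z)$ together with the oddness of $b$ is the right ingredient, but it must be deployed to exclude this case rather than to prove $[Z_1]=[Z_2]$, which fails for the very same reason. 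Once that computation is supplied, the remaining cases of your analysis (several $\iota$-orbits, or a single orbit occurring with multiplicity at least $2$) are correct and agree with the paper's treatment of the decompositions with an $\iota^*$-fixed summand.
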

\begin{proof}
	Suppose that 
	\begin{equation*}
		[D_1 \cap D_2]=A_1+A_2+\dots + A_n \in H^{2,2}(X, \Z),
	\end{equation*} 
	where $A_i \in H^{2,2}(X, \Z)$ are effective classes, not necessarily pairwise distinct. By Theorem \ref{thm BCNS Aut(X)} we have $\iota^*[D]=[D]$, hence $\iota^*[D_1 \cap D_2]=[D_1 \cap D_2]$. If $n>1$ is odd, then there exists $i$ such that $\iota^*A_i=A_i$ since $\iota$ is an involution, hence we take $A:=A_i$ and $B:=A_1+ \dots + A_{i-1}+A_{i+1}+ \dots + A_n$. Thus $\iota^*A=A$ and $\iota^*B=B$. We obtain a contradiction with the assumption that there is no decomposition of this form. Suppose now that $n=2$ and $\iota^*A_1=A_2$. We show that this is not possible. Recall that the class of $D$ in $\Pic(X)$ is $bh-a\delta$, with $(a, b)$ minimal solution of the Pell-type equation $P_t(-1)$. By Theorem \ref{thm basis H2,2(S2, Z)} the classes $A_1$ and $A_2$ in $H^{2,2}(X, \Z)$ are of the form
	\begin{equation*}
		\begin{array}{l}
			A_1=\left( x+\frac{y}{2} \right) h^2+\frac{z}{8} \delta^2 -\frac{y}{2}h\delta+\left( \frac{1}{8}z+w \right) \frac{2}{5}q_X^{\vee}, \\[1ex]
			A_2=\left( b^2-x-\frac{y}{2} \right) h^2+\left( a^2-\frac{z}{8}\right) \delta^2 + \left( -2ab+\frac{y}{2}\right) h\delta-\left( \frac{1}{8}z+w\right) \frac{2}{5}q_X^{\vee},
		\end{array}
	\end{equation*}
	for some $x, y, z, w \in \Z$. By Theorem \ref{thm BCNS Aut(X)} and $(\ref{eq def iota^2})$, we obtain
	\begin{equation*}
		\begin{array}{lll}
			\iota^*A_1 & = & \left( c^2 \left( x + \frac{y}{2} \right) +cd  \left( -\frac{y}{2} \right) +\frac{z}{8}d^2 \right) h^2 \\[0.8ex]
			& & + \left( t^2d^2 \left( x+\frac{y}{2}\right) -cdt\frac{y}{2}+c^2\frac{z}{8}\right) \delta^2 \\[0.8ex]
			& & +\left( -2cdt\left( x+ \frac{y}{2}\right) +c^2\frac{y}{2}+td^2\frac{y}{2}-2cd\frac{z}{8}\right) h\delta \\[0.8ex]
			& & +\left( \frac{1}{8}z+w\right) \frac{2}{5}q_X^{\vee}, 
		\end{array}
	\end{equation*}
	and similarly
	\begin{equation*}
		\begin{array}{lll}
			\iota^*A_2 & = & \left( c^2\left( b^2-x-\frac{y}{2} \right) +cd\left(\frac{y}{2} -2ab \right) +d^2\left( a^2-\frac{z}{8}\right) \right) h^2 \\[0.8ex]
			& & + \left( t^2d^2\left( b^2-x-\frac{y}{2}\right) + \left( \frac{y}{2}-2ab \right)cdt +c^2 \left( a^2-\frac{z}{8} \right) \right) \delta^2 \\[0.8ex]
			& & +\left( -2cdt\left(b^2-x-\frac{y}{2}\right) +(c^2+td^2)\left( 2ab-\frac{y}{2} \right) -2cd\left( a^2-\frac{z}{8} \right) \right) h\delta \\[0.8ex]
			& & -\left( \frac{1}{8}z+w\right) \frac{2}{5}q_X^{\vee}. 
		\end{array}
	\end{equation*}
	Imposing $\iota^*A_1=A_2$, we obtain a system whose solution is
	\begin{equation*}
		\begin{cases}
			x=\frac{1}{2}b^2-ab, \\
			y=2ab, \\
			z=4a^2, \\
			w=-\frac{a^2}{2}.
		\end{cases}
	\end{equation*}
	Recall that $b$ is odd by Proposition \ref{prop D=bh-adelta then b odd}, so $x \not \in \Z$, which is not possible. Since by assumption we cannot have $\iota^*A_1=A_1$, one between $A_1$ and $A_2$ is zero, so $D_1 \cap D_2$ is reduced and irreducible. If $n>1$ is even, if there exists an $i$ such that $\iota^*A_i=A_i$, we proceed as in the case of $n$ odd, otherwise without loss of generality we can assume that $\iota^*A_1=A_2$. Then, taking $A:=A_1+A_2$ and $B:=A_3+ \dots + A_n$, we have $\iota^*A=A$ and $\iota^*B=B$, which contradicts the assumption. We conclude that $D_1 \cap D_2$ is reduced and irreducible.
\end{proof}
We can now state the main theorem of this section, the \emph{irreducibility property}. 
\begin{thm} \label{thm irreducibility property}
	Let $X$ be the Hilbert square of a projective K3 surface $S_{2t}$ with $\Pic(S_{2t})= \Z H$ and $H^2=2t$ and such that $X$ admits an ample divisor $D \in \Div(X)$ with $q_X(D)=2$. Let $D_1, D_2 \in |D|$ be two distinct divisors.
	\begin{thmlist}
		\item If $t=2$, then the surface $D_1 \cap D_2$ can be reducible. If $[D_1 \cap D_2]=A+B$, where $[D_1 \cap D_2]$ is the fundamental cohomological class of $D_1 \cap D_2$ in $H^{2,2}(X, \Z)$, then $A=f^*\sigma_{1,1}$ and $B=f^*\sigma_2$, where $f: X \rightarrow \mathbb{G}(1, \Pn^3)$ is the map given in $(\ref{eq def f t=2})$ and $\sigma_{1,1}$, $\sigma_2$ are the classes in $H^4(\mathbb{G}(1, \Pn^3), \Z)$ of the Schubert varieties in $(\ref{eq Schubert varieties})$. Moreover, if $S_4$ is generic, then the effective classes $A$ and $B$ in $H^{2,2}(X, \Z)$ are
		\begin{equation*}
			A=\frac{1}{2}h^2-\frac{1}{4}\delta^2-\frac{1}{2}h\delta-\frac{1}{10} q_X^{\vee}, \qquad B=\frac{1}{2}h^2+\frac{5}{4}\delta^2-\frac{3}{2}h\delta+\frac{1}{10}q_X^{\vee}.
		\end{equation*} \label{thm irr property t=2}
		\item If $t \neq 2$ and $S_{2t}$ is generic, then $D_1 \cap D_2$ is a reduced and irreducible surface. \label{thm irr property t neq 2}
	\end{thmlist}
\end{thm}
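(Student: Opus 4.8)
The plan is as follows. Part~\ref{thm irr property t=2} ($t=2$) is essentially a restatement of the discussion preceding the theorem: since $S_4$ has Picard rank $1$ it contains no line, so the Beauville involution $\iota$ is everywhere defined, $f=\varphi_{|D|}$ factors through $X/\langle\iota\rangle$, and $f^*\sigma_1=c_1(\Ol_X(D))$ forces $D^2=[D_1\cap D_2]=f^*(\sigma_{1,1}+\sigma_2)$, hence $A=f^*\sigma_{1,1}$ and $B=f^*\sigma_2$; the explicit expressions are (\ref{eq pullback Schubert classes}), for which one refers to \cite[Proposition 4.4.4]{novario2021ths}. So the substance is part~\ref{thm irr property t neq 2}, and here the strategy is to invoke Lemma~\ref{lem technical}: it is enough to prove that there is \emph{no} decomposition $[D_1\cap D_2]=A+B$ in $H^{2,2}(X,\Z)$ with $A,B$ nonzero effective classes satisfying $\iota^*A=A$ and $\iota^*B=B$. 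Since $D_1,D_2\in|D|$ we have $[D_1\cap D_2]=D^2$, and $\iota^*D^2=D^2$ by Theorem~\ref{thm BCNS Aut(X)}; thus $B=D^2-A$ is automatically $\iota$-invariant once $A$ is, and the task reduces to excluding an $\iota$-invariant effective $A$ with $0\neq A\neq D^2$ and $D^2-A$ also effective.

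First I would describe the lattice of $\iota$-invariant integral Hodge classes of type $(2,2)$. The action (\ref{eq def iota^2}) of $\iota^*$ on $H^{2,2}(X,\Q)$ fixes $q_X^{\vee}$ and acts on $\Sym^2\NS(X)_\Q$ as the second symmetric power of the reflection (\ref{eq matrix action iota}); a short computation shows that its $(+1)$-eigenspace is $3$-dimensional, and intersecting it with the lattice $H^{2,2}(X,\Z)$ of Theorem~\ref{thm basis H2,2(S2, Z)} yields an explicit rank-$3$ sublattice in terms of which a general $\iota$-invariant $A$ is written with integer coordinates. I would also record, using that $X$ carries an ample class of square $2$, that $P_{4t}(5)$ is not solvable (Theorem~\ref{thm BCNS Aut(X)}), so by Theorem~\ref{thm Bayer Macri} the nef cone of $X$ coincides with its moving cone, with extremal rays spanned by $h$ and $\iota^*h=ch-td\delta$, where $(c,d)$ is the minimal solution of $P_t(1)$.

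Next I would impose effectivity. For an effective surface class $Z$ and nef divisors $L_1,L_2$ on $X$ one has $\langle Z,L_1L_2\rangle\ge 0$, strictly when $L_1=L_2=D$ (ample) and $Z\neq 0$, by Nakai--Moishezon. Applying this to $A$ and to $B=D^2-A$ with the extremal nef classes $h$ and $\iota^*h$ --- and using $\iota^*A=A$, $\iota^*B=B$, which collapse the $(\iota^*h)^2$-conditions onto the $h^2$-conditions --- produces, among other things, the integer constraints
\begin{equation*}
\langle A,h^2\rangle\ge 0,\qquad \langle A,h\cdot\iota^*h\rangle\ge 0,\qquad \langle B,h^2\rangle\ge 0,\qquad \langle B,h\cdot\iota^*h\rangle\ge 0,\qquad 0<\langle A,D^2\rangle<\langle D^2,D^2\rangle=12,
\end{equation*}
all evaluated through Proposition~\ref{prop rmk 2.1 OG} and Proposition~\ref{prop O'Grady int q_Xvee}. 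Combined with the integrality of $A$ in the eigenlattice and the relations $a^2-tb^2=-1$, $c=a^2+tb^2$, $d=2ab$ of (\ref{eq relation Pt(1) and Pt(-1)}), this is a finite Diophantine system; the plan is to show, by elimination exactly as in the proofs of Lemma~\ref{lem technical} and Theorem~\ref{thm class F and comm diagram} (and with the help of a computer), that it has no solution as soon as $t\neq 2$, while for $t=2$ its only solutions are $A=f^*\sigma_{1,1}$, $B=f^*\sigma_2$. I expect a parity obstruction of the kind ``$b$ is odd'' (Proposition~\ref{prop D=bh-adelta then b odd}) to be the mechanism forcing the contradiction, as it was in Lemma~\ref{lem technical}.

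The hard part will be this last step: arranging the effectivity inequalities so that the dependence on $t$ is transparent and the exceptional case $t=2$ separates cleanly, rather than disappearing into the parametrization. Everything else --- the eigenspace computation and the intersection numbers $\langle A,h^2\rangle$, $\langle A,h\cdot\iota^*h\rangle$, $\langle A,D^2\rangle$ --- is routine given Proposition~\ref{prop rmk 2.1 OG}, Proposition~\ref{prop O'Grady int q_Xvee} and Theorem~\ref{thm basis H2,2(S2, Z)}.
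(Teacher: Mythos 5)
Your reduction via Lemma~\ref{lem technical} and the overall shape (write an $\iota$-invariant class $A$ in the integral basis of Theorem~\ref{thm basis H2,2(S2, Z)}, impose positivity against $h^2$ and against $D^2$, then eliminate) is exactly the paper's strategy. But there is a genuine gap in the choice of positivity constraints: you replace the paper's transcendental condition $\langle A,(\sigma+\bar\sigma)^2\rangle\ge 0$, $\langle B,(\sigma+\bar\sigma)^2\rangle\ge 0$ by the algebraic one $\langle A,h\cdot\iota^*h\rangle\ge 0$, $\langle B,h\cdot\iota^*h\rangle\ge 0$. This is not an adequate substitute. The symplectic constraint is the engine of the proof because its total budget is tiny: $\langle D^2,(\sigma+\bar\sigma)^2\rangle=(D,D)\,\eta=2\eta$, which is what gives the paper's inequality (\ref{Eq 2 t neq 2}), $0\le 2tx+ty+z+10w\le 2$, and hence the finite case analysis $z+10w\in\{-tk,-tk+1,-tk+2\}$. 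By contrast $\langle D^2,h\cdot\iota^*h\rangle=4tc+8t^2b^2=16t^2b^2-4t$, so your mixed nef condition is extremely loose; and since $D=\tfrac{1}{2bt}(h+\iota^*h)$ and $\langle\cdot,(\iota^*h)^2\rangle$ collapses onto $\langle\cdot,h^2\rangle$ on invariant classes, the conditions you list already exhaust everything obtainable from products of nef algebraic classes.

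Concretely, your system is \emph{not} empty for admissible $t\neq 2$. Take $t=13$ (the next admissible value after $10$), so $(a,b)=(18,5)$, $(c,d)=(649,180)$, $D=5h-18\delta$, and consider
\begin{equation*}
A=3h^2-39\,\delta^2-31\cdot\tfrac{2}{5}q_X^{\vee}\in H^{2,2}(X,\Z),\qquad B=D^2-A,
\end{equation*}
i.e.\ $(x,y,z,w)=(3,0,-312,8)$ in the paper's coordinates. One checks $\iota^*A=A$ (indeed $3c^2-39d^2=3$, $cd(78-6t)=0$, $3t^2d^2-39c^2=-39$), and, using Proposition~\ref{prop rmk 2.1 OG}, $\langle A,h^2\rangle=52$, $\langle B,h^2\rangle=33800$, $\langle A,h\cdot\iota^*h\rangle=33748$, $\langle B,h\cdot\iota^*h\rangle=33800$, $\langle A,D^2\rangle=4$, $\langle B,D^2\rangle=8$: every constraint you impose is satisfied with $0\neq A\neq D^2$. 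Yet $\langle A,(\sigma+\bar\sigma)^2\rangle=(2\cdot13\cdot3-2\cdot(-39)+10\cdot(-31))\eta=-154\,\eta<0$, so it is only the symplectic positivity (condition (\ref{Eq 2 t neq 2})) that rules this class out. So the ``hard part'' you defer cannot be carried out with the inequalities you propose: you must reinstate the constraint coming from $\int_A\sigma\wedge\bar\sigma\ge 0$ (or some other input seeing the symplectic structure), which is precisely what the paper does. A minor further point: for part~(i) the paper does not merely quote the discussion before the theorem; it runs the same elimination for $t=2$ (under genericity) and finds that the only admissible invariant effective pair is $f^*\sigma_{1,1}$, $f^*\sigma_2$, which is how the classification of the two components is actually proved.
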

\begin{proof}
	We begin with $t=2$, so $X=S_4^{[2]}$ is the Hilbert square of a smooth quartic surface of $\Pn^3$ with Picard rank\,\,$1$ and the class of  $D$ is $h-\delta \in \Pic(X)$, where $h$ is the class induced by the ample generator of $\Pic(S_4)$. For the reader's convenience, we work under the assumption that $S_4$ is generic: the procedure that we will develop in this way is exactly the same that we will use for Item $(ii)$, where $S_{2t}$ is generic by assumption, but easier to follow for the case $t=2$. By Lemma $\ref{lem technical}$, the surface $D_1 \cap D_2$ can be reducible only if there exist effective classes $A, B \in H^{2,2}(X, \Z)$ such that $[D_1 \cap D_2]=A+B$ and $\iota^*(A)=A$, $\iota^*(B)=B$, where $\iota$ is the Beauville involution. Since $(h-\delta)^2=h^2+\delta^2-2h\delta$, by Theorem \ref{thm basis H2,2(S2, Z)} we can write, for some $x, y, z, w \in \Z$,
	\begin{equation*}
		\begin{array}{l}
			A=\left( x+\frac{y}{2} \right)h^2 +\frac{z}{8} \delta^2 -\frac{y}{2}h\delta+\left( \frac{1}{8}z+w \right) \frac{2}{5}q_X^{\vee} \in H^{2,2}(X, \Z), \\[1ex]
			B=\left( 1-x-\frac{y}{2} \right) h^2+\left( 1-\frac{z}{8}\right) \delta^2 + \left(\frac{y}{2} -2 \right) h\delta-\left( \frac{1}{8}z+w\right) \frac{2}{5}q_X^{\vee} \in H^{2,2}(X, \Z).
		\end{array}
	\end{equation*}
	\begin{itemize}
		\item By assumption $A$ and $B$ are effective. Moreover, $h \in \Pic(X)$ is nef by Theorem \ref{thm Bayer Macri}. By Kleiman's theorem we have $\langle A, h^2 \rangle \ge 0$ and $\langle B, h^2 \rangle \ge 0$, where $\langle \, \cdot \, , \, \cdot \, \rangle$ is the bilinear form of $H^4(X, \Z)$ given in Proposition \ref{prop rmk 2.1 OG}, which coincides with the intersection pairing. We obtain the following condition:
		\begin{equation} \label{Eq1 t=2}
			0 \le 12 x + 6y +z + 10w \le 10.
		\end{equation}
		\item Let $\sigma \in H^0(X, \Omega^2_X)$ be the symplectic form. Then, since $A$ and $B$ are effective classes in $H^{2,2}(X, \Z)$ by assumption, we have
		\begin{equation*}
			\displaystyle\int_A (\sigma+\bar{\sigma})^2=2\displaystyle\int_A \sigma \wedge \bar{\sigma} \ge 0, \qquad \displaystyle\int_B (\sigma+\bar{\sigma})^2=2\displaystyle\int_B \sigma \wedge \bar{\sigma} \ge 0,
		\end{equation*}
		since $\sigma \wedge \bar{\sigma} \in H^{2,2}(X)$ is a volume form on $A$ and on $B$. Note that $\sigma \wedge \bar{\sigma}$ can be zero on $A$ or $B$, for instance when $A$ or $B$ are Lagrangian. Hence $\langle A, (\sigma+\bar{\sigma})^2 \rangle \ge 0$ and $\langle B, (\sigma+\bar{\sigma})^2 \rangle \ge 0$. Using $(\ref{eq sigma + sigma bar})$ we obtain the following condition:
		\begin{equation} \label{Eq2 t=2}
			0 \le 4x+2y+z+10w \le 2.
		\end{equation}
		\item By abuse of notation we write $D$ also for its class in $\Pic(X)$. Since $D$ is ample and by assumption $A$ and\,\,$B$ are effective, by the Nakai--Moishezon criterion we have $\langle A, D^2 \rangle > 0$ and $\langle B, D^2 \rangle > 0$, and we obtain the following condition:
		\begin{equation} \label{Eq3 t=2}
			0< 40x+12y+3z+20w<12.
		\end{equation}
		\item By Theorem \ref{thm BCNS Aut(X)} and $(\ref{eq def iota^2})$ we have
		\begin{equation*}
			\iota^*A=\left( 9x+\frac{3}{2}y+\frac{1}{2}z \right) h^2+\left( 16x+2y+\frac{9}{8} \right) \delta^2-\left( 24x+\frac{7}{2}y+\frac{3}{2}z \right) h\delta+\left( \frac{1}{8}z+w\right) \frac{2}{5}q_X^{\vee}.
		\end{equation*}
		Since $\iota^*A=A$, we obtain the system
		\begin{equation*}
			\left\{ \arraycolsep=1pt\def\arraystretch{1.2} \begin{array}{l}
				9x+\frac{3}{2}y+\frac{1}{2}z=x+\frac{y}{2},\\16x+2y+\frac{9}{8}z=\frac{z}{8},\\24x+\frac{7}{2}y+\frac{3}{2}z=\frac{y}{2}, \end{array}\right.
		\end{equation*}
		which gives the following condition:
		\begin{equation} \label{Eq4 t=2}
			16x+2y+z=0.
		\end{equation}
	\end{itemize}
	We look for $x, y, z, w \in \Z$ which satisfy $(\ref{Eq1 t=2})$, $(\ref{Eq2 t=2})$, $(\ref{Eq3 t=2})$ and $(\ref{Eq4 t=2})$. Since $-2 \le 8x+4y \le 10$ by $(\ref{Eq1 t=2})$ and $(\ref{Eq2 t=2})$, and $x, y \in \Z$, we have
	\begin{equation*}
		2x+y \in \{0, 1, 2\}.
	\end{equation*}
	\begin{itemize}
		\item Suppose that $2x+y=0$. By $(\ref{Eq4 t=2})$ we have $z=-12x$, and $(\ref{Eq3 t=2})$ becomes $0<w-x<\frac{3}{5}$. Since $w-x \in \Z$, this condition is never satisfied.
		\item Suppose that $2x+y=1$. By $(\ref{Eq4 t=2})$ we have $z=-12x-2$, and $(\ref{Eq3 t=2})$ becomes $-\frac{3}{10}<w-x<\frac{3}{10}$. Since $w-x \in \Z$, we get $x=w$, and $(\ref{Eq2 t=2})$ gives $x \in \{-1 , 0\}$. We obtain the following two solutions:
		\begin{equation*}
			\left\{ \arraycolsep=1pt\def\arraystretch{1.2} \begin{array}{l}
				x=0,\\y=1,\\z=-2,\\w=0, \end{array}\right.
			\qquad
			\text{and}
			\qquad
			\left\{\arraycolsep=1pt\def\arraystretch{1.2} \begin{array}{l} x=-1,\\y=3,\\z=10,\\w=-1.\end{array}\right
			.
		\end{equation*}
		which coincide with the effective classes $f^*\sigma_{1, 1}$ and $f^*\sigma_2$ respectively, see (\ref{eq pullback Schubert classes}) and \cite[Proposition 4.4.4]{novario2021ths}. Moreover, with the same technique it is possible to show that $f^*\sigma_{1, 1}$ and $f^*\sigma_2$ are reduced and irreducible.
		\item The case $2x+y=2$ is symmetric to the case $2x+y=0$, i.e., if $A$ is a class obtained in this case, then $A$ coincides with a class $B$ obtained in the case $2x+y=0$. Since there are no classes in the case $2x+y=0$, there are no classes in the cases $2x+y=2$. 
	\end{itemize}
	We conclude that if $D_1 \cap D_2$ is a reducible surface, then it is reduced with two irreducible components whose classes in $H^{2,2}(X, \Z)$ are $f^*\sigma_{1, 1}$ and $f^*\sigma_2$.
	
	Suppose now that $t \neq 2$ and $S_{2t}$ is generic. We want to show that the surface $D_1 \cap D_2$ is reduced and irreducible. By Lemma $\ref{lem technical}$ it is enough to show that we cannot have $[D_1 \cap D_2]=A+B$ for effective $A, B \in H^{2,2}(X, \Z)$ such that $\iota^*A=A$ and $\iota^*B=B$. The technique is the same seen before. By abuse of notation we write $D$ also for its class in $\Pic(X)$. Recall that $D=bh-a\delta$, where $(a, b)$ is the minimal solution of the Pell-type equation $P_t(-1)$. We have already remarked that the first value of $t$ different from $2$ which satisfies our assumptions is $t=10$, and in this case $a=3$, hence for other values of $t$ that we consider we have $a > 3$. Thus we can suppose that $t \ge 10$ and $a \ge 3$. Since\,\,$S_{2t}$ is generic by assumption, by Theorem \ref{thm basis H2,2(S2, Z)} we can write
	\begin{equation*}
		\begin{array}{l}
			A=\left( x+\frac{y}{2} \right) h^2 +\frac{z}{8}\delta^2-\frac{y}{2}h\delta+\left( \frac{1}{8}z+w \right) \frac{2}{5}q_X^{\vee}, \\[1ex]
			B=\left( b^2-x-\frac{y}{2} \right) h^2 +\left( a^2-\frac{z}{8} \right) \delta^2+ \left( -2ab+\frac{y}{2} \right) h\delta-\left( \frac{1}{8}z+w \right)  \frac{2}{5}q_X^{\vee},
		\end{array}
	\end{equation*}
	for some $x, y, z, w \in \Z$. Proceeding as for the case $t=2$ we obtain the following conditions:
	\begin{equation} \label{Eq 1 t neq 2}
		0 \le 6tx+3ty+z+10w \le 6tb^2-2a^2,
	\end{equation}
	\begin{equation} \label{Eq 2 t neq 2}
		0 \le 2tx+ty+z+10w \le 2,
	\end{equation}
	\begin{equation} \label{Eq 3 t neq 2}
		0 < (4t+8t^2b^2)x+(2t+4t^2b^2-4abt)y+(1+tb^2)z+20w<12,
	\end{equation}
	\begin{equation} \label{Eq4 t neq 2}
		8tdx+4(td-c)y+dz=0.
	\end{equation}
	where condition (\ref{Eq 1 t neq 2}) is given by $\langle A, h^2 \rangle \ge 0$ and $\langle B, h^2 \rangle \ge 0$, condition (\ref{Eq 2 t neq 2}) is given by $(\sigma + \bar{\sigma})^2 \rangle \ge 0$ and $\langle B, (\sigma + \bar{\sigma})^2 \rangle \ge 0$, condition (\ref{Eq 3 t neq 2}) is given by $\langle A, D^2 \rangle > 0$ and $\langle B, D^2 \rangle > 0$ and condition (\ref{Eq4 t neq 2}) is given by $\iota^*A=A$. Note that $(\ref{Eq 1 t neq 2})$ and $(\ref{Eq 2 t neq 2})$ implies $-\frac{1}{t} \le 2x+y \le 2b^2+\frac{1}{t}$. Since $2x+y \in \Z$ and $t \ge 10$ we have
	\begin{equation*}
		2x+y \in \{0, 1, \dots , 2b^2\}.
	\end{equation*}
	Note that, similarly to the case $t=2$ seen above, a class $A$ obtained by imposing $2x+y \in \{b^2+1, \dots , 2b^2\}$ coincide with a class $B$ obtained for $2x+y \in \{0, 1, \dots , b^2\}$. Hence it suffices to study $2x+y \in \{0, 1, \dots , b^2\}$.
	Suppose that $2x+y=k$, where $k \in \{0, 1, \dots, , b^2\}$. By $(\ref{Eq 2 t neq 2})$ we have $-tk \le z+10w \le 2-tk$, and since $z+10w \in \Z$ we have 
%	\begin{equation*}
%		-tk \le z+10w \le 2-tk,
%	\end{equation*} 
%	and since $z+10w \in \Z$ we have
	\begin{equation*}
		z+10w \in \{-tk, -tk+1, -tk+2\}.
	\end{equation*}
	Suppose that $z+10w=-tk$. Then $(\ref{Eq 3 t neq 2})$ gives, after some computations, $-4t^2b^2k < -4abty+a^2z < -4t^2b^2k+12$.
%	\begin{equation*}
%		-4t^2b^2k < -4abty+a^2z < -4t^2b^2k+12.
%	\end{equation*}
	Since $-4abty+a^2z \in \Z$, we have $-4abty+a^2z=-4t^2b^2k+h$, where $h \in \{1, 2, \dots, , 11\}$.
%	\begin{equation*}
%		-4abty+a^2z=-4t^2b^2k+h, \qquad h \in \{1, 2, \dots, , 11\}.
%	\end{equation*}
	With the help of a computer we obtain
	\begin{equation*}
		w=\frac{-5a^2tk+2ha^2-4tk+h}{10a^2}.
	\end{equation*}
	Then $w$ is an integer only if $4tk-h \equiv 0$ $(\text{mod}\,\,a^2)$.
%	\begin{equation*}
%		4tk-h \equiv 0 \qquad (\text{mod}\,\,a^2).
%	\end{equation*}
	If $k=0$, then $-h \equiv_{a^2} 0$ only if $h=9$ and $a=3$, being $a \ge 3$. This happens only when $t=10$, and we get $w=\frac{19}{10}$, which is not an integer. Suppose now that $k \neq 0$. Since $k \le b^2$, we have $4tk-h \le 4a^2+4-h$, hence in order to get $4tk-h \equiv_{a^2} 0$ we must have $4tk-h \in \{a^2, 2a^2, 3a^2, 4a^2\}$. 
%	\begin{equation*}
%		4tk-h \in \{a^2, 2a^2, 3a^2, 4a^2\}.
%	\end{equation*}	 
	If $4tk-h=a^2$, then $k=\frac{tb^2+h-1}{4t}$.
%	\begin{equation*}
%		k=\frac{tb^2+h-1}{4t}.
%	\end{equation*}
	If $h \neq 1, 11$, then $h-1$ is not divisible by $t \ge 10$, and $k$ is not an integer. If $h=1$, then $k=\frac{b^2}{4}$, which is not an integer by Proposition \ref{prop D=bh-adelta then b odd}. If $h=11$, then $h-1=10$ is divisible by $t \ge 10$ if and only if $t=10$, which implies $b=1$: thus $k=\frac{1}{2}$, which is not an integer.
	
	In a similar way it is possible to show that all the other remaining cases are not possible: we omit the details, which can be found in \cite[Appendix B]{novario2021ths}. We conclude that there are no effective classes $A, B \in H^{2,2}(X, \Z)$ such that $[D_1 \cap D_2]=A+B$, hence $D_1 \cap D_2$ is a reduced and irreducible surface.
\end{proof}
\section{Geometric description} \label{Section geometric description}
Let $X$ be the Hilbert square of a generic K3 surface of degree $2t$. Suppose that $X$ admits an ample divisor $D$ with $q_X(D)=2$ and $t \neq 2$. In this section, using Theorem \ref{thm class F and comm diagram} and Theorem \ref{thm irreducibility property}, we show that $\varphi_{|D|}: X \rightarrow Y \subset \Pn^5$ is a double cover of an EPW sextic $Y$, hence $X$ is a double EPW sextic. Our idea is to follow the strategy developed by O'Grady in \cite{o2008irreducible} with remarkable simplifications obtained thanks to the existence of the anti-symplectic involution given by Theorem \ref{thm BCNS Aut(X)}. We will omit proofs which are identical to the ones in \cite{o2008irreducible}.

Let $X$ and $D$ be as above. We introduce the following notation from \cite[$\S 4$]{o2008irreducible}. We choose an isomorphism $|D|^{\vee} \xrightarrow{\sim} \Pn^5$ and we denote by $f: X \dashrightarrow \Pn^5$ the composition $X \dashrightarrow|D|^{\vee} \xrightarrow{\sim} \Pn^5$: this is basically the rational map $\varphi_{|D|}$. Let $B$ be the base locus of $|D|$, and $\beta_B: \tilde{X} \rightarrow X$ be the blow-up of $X$ in $B$. We denote by $\tilde{f}: X \rightarrow \Pn^5$ the regular map which resolves the indeterminacies of $f$. Let $Y:=\text{Im}(\tilde{f})$, which is a closed subset of $\Pn^5$. We obtain a dominant map, which we call $f$ by abuse of notation:
\begin{equation*}
	f: X \dashrightarrow Y.
\end{equation*}
Note that $\Deg(f)$ is even by Theorem \ref{thm class F and comm diagram}. Let $Y_0$ be the interior of $\tilde{f}(X \setminus B)$, thus $Y_0 \subseteq Y$ is open and dense. Let $X_0:=(X \setminus B) \cap \tilde{f}^{-1}(Y_0)$, so $X_0 \subseteq X$ is open and dense. We call $f_0$ the restriction of $\tilde{f}$ to $X_0$, which is a regular surjective map:
\begin{equation*}
	f_0: X_0 \rightarrow Y_0.
\end{equation*}
We now summarize the main steps which will lead us to the main theorem of this section:
\begin{enumerate}[label=(\roman*)]
	\item $\Dim(Y)=4$.
	\item $\Deg(f) \cdot \Deg(Y) \le 12$ and the equality holds if and only if $\text{Bs}|D|=\emptyset$.
	\item Either $\Deg(f)=2$ and $\Deg(Y)=6$ or $\Deg(f)=4$ and $\Deg(Y)=3$. In particular $\text{Bs}|D|=\emptyset$ and $f$ is a morphism.
	\item The case $\Deg(f)=4$ and $\Deg(Y)=3$ never holds.
	\item The variety $X$ is a double EPW sextic.
\end{enumerate}
The following corollary of Theorem \ref{thm irreducibility property} will be fundamental to prove the other results of this section.
\begin{cor} \label{cor L cap Y0 reduced and irred}
	Let $X$ and $D$ be as in Theorem $\ref{thm irr property t neq 2}$, and keep notation as above.
	\begin{thmlist}
		\item If $L \subset \Pn^5$ is a linear subset of codimension at most $2$, then $L \cap Y_0$ is reduced and irreducible and, if non empty, it has pure codimension equal to $\textnormal{cod}(L, \Pn^5)$. \label{cor L cap Y0 reduced and irred 1}
		\item The base locus $B=\textnormal{Bs}|D|$ has dimension at most $1$. Let $B_{\textnormal{red}}$ be the reduced scheme associated to $B$ and $B^1_{\textnormal{red}}$ be the union of the irreducible components of $B_{\textnormal{red}}$ of dimension $1$. If $D_1, D_2, D_3 \in |D|$ are linearly independent, then $D_1 \cap D_2 \cap D_3$ has pure dimension $1$ and there exists a unique decomposition
		\begin{equation*} 
			[D_1 \cap D_2 \cap D_3]=\Gamma + \Sigma,
		\end{equation*}
		where $\Gamma$, $\Sigma$ are (classes of) effective $1$-cycles such that
		\begin{itemize}
			\item $\textnormal{Supp}(\Gamma) \cap B_{\textnormal{red}}$ is either $0$-dimensional or empty.
			\item $\textnormal{Supp}(\Sigma)=B^1_{\textnormal{red}}$.
		\end{itemize}
	\label{cor L cap Y0 reduced and irred 2}
	\end{thmlist}
\end{cor}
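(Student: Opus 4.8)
The plan is to follow, essentially verbatim, O'Grady's argument for the corresponding corollary in \cite{o2008irreducible}, with Theorem~\ref{thm irr property t neq 2} playing the role of his irreducibility property; the content is the transport of the irreducibility statements for members of $|D|$ on $X$ to statements about linear sections of $Y_0$. First I would record the preliminaries that make this possible. Since $D$ is ample it is big, so $f$ is generically finite; hence $\Dim(Y)=4$ and, being irreducible of dimension $4$ in $\Pn^5$, $Y$ is a hypersurface, in particular Cohen--Macaulay. The map $f_0\colon X_0\to Y_0$ is a surjective, generically finite morphism between varieties of dimension $4$ with $X_0$ smooth, and if $D_1,\dots,D_c\in|D|$ correspond to hyperplanes $H_1,\dots,H_c$ with $L=H_1\cap\dots\cap H_c$, then, because $f_0^*\mathcal{O}_{\Pn^5}(1)\cong\mathcal{O}_X(D)|_{X_0}$ and $H_i$ is cut out by the section defining $D_i$, one has the scheme-theoretic equality $f_0^{-1}(L\cap Y_0)=(D_1\cap\dots\cap D_c)\cap X_0$.

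The heart of the argument is the claim: \emph{if $D_1,D_2,D_3\in|D|$ are linearly independent, then $D_1\cap D_2\not\subseteq D_3$.} Suppose not; put $S:=D_1\cap D_2$ and let $N:=\langle D_1,D_2,D_3\rangle$ be the corresponding net, with $g_N\colon X\dashrightarrow\Pn^2$ the associated rational map. Since $S$ lies in each of $D_1,D_2,D_3$, it lies in every member of $N$, so $S\subseteq\Bs(N)$; and for any two distinct members $D',D''$ of $N$ the surface $D'\cap D''$ is reduced and irreducible by Theorem~\ref{thm irr property t neq 2}, hence it equals $S$ (it contains the irreducible surface $S$ of the same dimension). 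Now $g_N$ is non-constant because the three defining sections are linearly independent; yet the fibre of $g_N$ over an arbitrary point of its image is contained in the intersection of two distinct members of $N$, that is in $S$, while it is disjoint from $\Bs(N)\supseteq S$. So every fibre of $g_N$ is empty, which forces $\Bs(N)=X$ and is absurd. This also yields the dimension bound in item~(ii): by the result recalled in Section~\ref{section generalities HS general K3 surfaces}, $|D|$ has no fixed part, so $\Bs|D|$ has codimension at least $2$; if it had a $2$-dimensional (irreducible) component $B'$, then for distinct $D_1,D_2\in|D|$ we would have $B'\subseteq D_1\cap D_2$, an irreducible surface, whence $D_1\cap D_2=B'\subseteq\Bs|D|\subseteq D_3$ for any third independent $D_3$, contradicting the claim. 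Therefore $\Dim(\Bs|D|)\le 1$.

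For item~(i) let $L$ have codimension $c\in\{1,2\}$ and write $L=H_1\cap\dots\cap H_c$ as above. By Proposition~\ref{prop every divisor in |D| is prime} for $c=1$ and by Theorem~\ref{thm irr property t neq 2} for $c=2$, the scheme $D_1\cap\dots\cap D_c$ is reduced and irreducible of dimension $4-c$; intersecting with the dense open $X_0$ preserves this (if the intersection with $X_0$ is empty there is nothing to prove). Surjectivity of $f_0$ gives $L\cap Y_0=f_0\big((D_1\cap\dots\cap D_c)\cap X_0\big)$, which is therefore irreducible, and combining $\Dim(L\cap Y_0)\le\Dim f_0^{-1}(L\cap Y_0)=4-c$ with the bound $\Dim(L\cap Y_0)\ge\Dim Y_0-c=4-c$ valid for a nonempty linear section, we conclude that $L\cap Y_0$ is irreducible of pure codimension $c$. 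Reducedness then follows: $Y$ is Cohen--Macaulay, so $L\cap Y$ has no embedded components and it suffices to check generic reducedness, which holds because $f_0^{-1}(L\cap Y_0)$ is reduced, $f_0$ is generically \'etale in characteristic $0$, and $L\cap Y_0$ is not contained in the branch locus of $f_0$ --- otherwise its reduced preimage would be a non-reduced multiple of a divisor.

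It remains to produce the decomposition in item~(ii). By the claim above, $D_1\cap D_2\cap D_3=(D_1\cap D_2)\cap D_3$ is a Cartier divisor on the irreducible surface $D_1\cap D_2$, hence has pure dimension $1$. The curve $B^1_{\textnormal{red}}$ is contained in every member of $|D|$, hence in $D_1\cap D_2\cap D_3$, and, being $1$-dimensional, it is a union of components of that $1$-cycle; I would define $\Sigma$ to be the part of $[D_1\cap D_2\cap D_3]$ supported on $B^1_{\textnormal{red}}$ and $\Gamma$ the remaining part. Since $\Dim\Bs|D|\le 1$, no component of $\Gamma$ is $1$-dimensional inside $B_{\textnormal{red}}$, so $\textnormal{Supp}(\Gamma)\cap B_{\textnormal{red}}$ is finite or empty, while $\textnormal{Supp}(\Sigma)=B^1_{\textnormal{red}}$ by construction; uniqueness is immediate, since each component of the cycle either is or is not contained in $B^1_{\textnormal{red}}$. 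I expect the only genuinely delicate points to be the lemma that excludes $D_1\cap D_2\subseteq D_3$ --- on which the whole statement rests --- and, to a lesser extent, the reducedness in item~(i), which requires the Cohen--Macaulayness of $Y$ and characteristic zero.
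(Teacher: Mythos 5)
The key lemma that $D_1\cap D_2\not\subseteq D_3$ for linearly independent members of $|D|$, the bound $\Dim(\Bs|D|)\le 1$, and your treatment of item (ii) are correct and follow the same line as O'Grady's Corollary 4.2.(ii), which the paper simply cites (with Theorem \ref{thm irr property t neq 2} as input). The genuine gap is in item (i): you base the reducedness argument and the lower bound $\Dim(L\cap Y_0)\ge \Dim(Y_0)-c=4-c$ on ``$\Dim(Y)=4$, hence $Y$ is a hypersurface, hence Cohen--Macaulay'', and on generic finiteness (and generic \'etaleness) of $f_0$, all justified by ``$D$ ample $\Rightarrow$ big $\Rightarrow$ $f$ generically finite''. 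That inference is false: bigness of $D$ controls $|mD|$ for $m\gg 0$, not the map defined by $|D|$ itself (for instance a $(1,2)$-polarization $D$ on an abelian surface is ample, yet $\varphi_{|D|}$ maps onto $\Pn^1$). In the present situation, excluding $\Dim(Y)=3$ is exactly the content of Proposition \ref{prop dim Y=3, deg Y} and Proposition \ref{prop dim Y=4 etc}, whose proofs use the corollary you are proving (item (ii) enters Proposition \ref{prop dim Y=3, deg Y}, item (i) enters the exclusion of $\Deg(Y)=2$), so importing $\Dim(Y)=4$, Cohen--Macaulayness of $Y$, or generic finiteness of $f_0$ at this stage is circular, and your one-line substitute for these facts does not hold.

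The paper's proof of item (i) needs no information on $\Dim(Y)$: writing $[D_1\cap\dots\cap D_c\cap X_0]=f_0^*[L]$ for the fundamental classes ($c=1,2$), it deduces directly from Proposition \ref{prop every divisor in |D| is prime} (for $c=1$) and Theorem \ref{thm irr property t neq 2} (for $c=2$), together with surjectivity of $f_0$ and density of $X_0$, that $L\cap Y_0$ is reduced and irreducible of pure codimension $c$ when non-empty: the image of the irreducible set $D_1\cap\dots\cap D_c\cap X_0$ is irreducible, and a reducible or non-reduced $L\cap Y_0$ would force the pulled-back cycle, i.e.\ $D_1\cap\dots\cap D_c$ on $X_0$, to be reducible or non-reduced. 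To repair your item (i), replace the Cohen--Macaulay/branch-locus discussion (whose final step, ruling out $L\cap Y_0$ inside the branch locus, is in any case only sketched) by this cycle-theoretic argument; the rest of your proposal stands as written.
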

We only prove Item $(i)$, even if it is almost the same as \cite[Corollary 4.2.(i)]{o2008irreducible}, since in our setting it is a direct consequence of Proposition \ref{prop every divisor in |D| is prime} and of Theorem \ref{thm irr property t neq 2}.
\begin{proof}
	If $L \cong \Pn^5$, there is nothing to prove. If $\text{cod}(L, \Pn^5)=1$, let $D_1 \in |D|$ be the divisor which corresponds to $L$ in the isomorphism $|D|^{\vee} \cong \Pn^5$. Then $[D_1 \cap X_0]=f_0^*[L]$, where $[D_1 \cap X_0]$ and $[L]$ are the fundamental cohomological classes of $D_1 \cap X_0$ and $L$ respectively. Since $X_0$ is open and dense in $X$ and $f_0$ is surjective, Proposition \ref{prop every divisor in |D| is prime} implies that $L \cap Y_0$ is reduced and irreducible of pure codimension $1$ if non-empty. If $\text{cod}(L, \Pn^5)=2$, then $L=L_1 \cap L_2$ with $L_1, L_2 \subset \Pn^5$ hyperplanes, hence $[D_1 \cap D_2 \cap X_0] = f_0^*[L]$, where $D_1, D_2 \in |D|$ corresponds to $L_1$ and $L_2$ respectively. Hence $L \cap Y_0$ is reduced and irreducible of pure codimension $2$ if non-empty by Theorem \ref{thm irr property t neq 2}. For Item $(ii)$ see \cite[Corollary 4.2.(ii)]{o2008irreducible} (one needs Theorem \ref{thm irr property t neq 2} once again).
\end{proof}
Note that $[D_1 \cap D_2 \cap D_3]$ in the statement of Corollary\,\,$\ref{cor L cap Y0 reduced and irred 2}$ denotes the fundamental cycle of $D_1 \cap D_2 \cap D_3$, see \cite[$\S 1.5$]{fulton2013intersection}. The following proposition corresponds to \cite[Corollary 4.3]{o2008irreducible}.
\begin{prop}
	Let $X$ and $D$ be as in Theorem $\ref{thm irr property t neq 2}$. Keep notation as above. Then $\Dim(Y) \in \{3, 4\}$.
\end{prop}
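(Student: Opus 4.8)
The plan is to determine $\Dim(Y)$ by elimination, following \cite[Corollary 4.3]{o2008irreducible} but feeding in Corollary \ref{cor L cap Y0 reduced and irred 1} in place of O'Grady's irreducibility input. Two bounds are immediate: $Y=\mathrm{Im}(\tilde f)$ is the image of the irreducible fourfold obtained by resolving $f$, so $\Dim(Y)\le 4$; and since $\Dim(H^0(X,\Ol_X(D)))=6$ the map $f$ is non-constant, so $\Dim(Y)\ge 1$. It therefore remains to rule out $\Dim(Y)=1$ and $\Dim(Y)=2$.

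First I would record two preliminary remarks. Since $f$ is induced by the complete linear system $|D|$, the image $Y\subset\Pn^5$ is non-degenerate: if $Y$ were contained in a hyperplane $\{s=0\}$ with $s\in H^0(X,\Ol_X(D))$, then $s$ would vanish on all of $X$, which is absurd. In particular, if $1\le\Dim(Y)\le 2$ then $Y$ is not a linear subspace, hence $\Deg(Y)\ge 2$. Moreover $Y_0$ is open and dense in the irreducible $Y$, so $\Dim(Y\setminus Y_0)\le\Dim(Y)-1$.

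Now suppose $k:=\Dim(Y)\in\{1,2\}$, and choose a general linear subspace $L=L_1\cap\dots\cap L_k\subset\Pn^5$ of codimension $k$, with $L_1,\dots,L_k$ general hyperplanes; let $D_1,\dots,D_k\in|D|$ be the corresponding members under the fixed isomorphism $|D|^\vee\cong\Pn^5$. Because $\Dim(Y\setminus Y_0)<k=\mathrm{cod}(L,\Pn^5)$, a general such $L$ misses $Y\setminus Y_0$, so $L\cap Y=L\cap Y_0$ is a finite set of $\Deg(Y)\ge 2$ reduced points, all lying in $Y_0$. On the other hand $f_0^{-1}(L)=D_1\cap\dots\cap D_k\cap X_0$ (each hyperplane $L_i$ pulls back to $D_i$), and surjectivity of $f_0\colon X_0\to Y_0$ gives $f_0(D_1\cap\dots\cap D_k\cap X_0)=L\cap Y_0$. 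But $D_1\cap\dots\cap D_k\cap X_0$ is irreducible: for $k=1$ it is a dense open subset of the prime divisor $D_1$ by Proposition \ref{prop every divisor in |D| is prime}, and for $k=2$ it is a dense open subset of the reduced and irreducible surface $D_1\cap D_2$ by Theorem \ref{thm irr property t neq 2}. Hence its image $L\cap Y_0$ is irreducible, i.e.\ a single point, contradicting $\Deg(Y)\ge 2$. This forces $\Dim(Y)\in\{3,4\}$.

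The main (and essentially only) delicate point is the case $k=2$, which is exactly where the hypothesis $t\ne 2$ enters: it relies on Theorem \ref{thm irr property t neq 2}, and the irreducibility of $D_1\cap D_2$ genuinely fails for $t=2$, in accordance with \cite{o2008irreducible}. Everything else is routine projective geometry --- non-degeneracy of $Y$, genericity of linear sections so that $L\cap Y$ is a reduced $0$-dimensional scheme contained in $Y_0$, and the fact that a continuous surjective image of an irreducible set is irreducible --- so I do not anticipate any real obstacle; the statement is a short corollary of Corollary \ref{cor L cap Y0 reduced and irred}.
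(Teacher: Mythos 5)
Your argument is correct and is essentially the proof the paper leaves implicit by citing \cite[Corollary 4.3]{o2008irreducible}: non-degeneracy of $Y\subset\Pn^5$ together with the irreducibility inputs (Proposition \ref{prop every divisor in |D| is prime} and Theorem \ref{thm irr property t neq 2}, i.e.\ the content of Corollary \ref{cor L cap Y0 reduced and irred 1}) shows a general linear section of codimension $\Dim(Y)\le 2$ would be both a single point and $\Deg(Y)\ge 2$ points, forcing $\Dim(Y)\ge 3$. So the proposal matches the intended (omitted) proof.
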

We want to prove that the case $\Dim(Y)=3$ never holds. First of all, similarly to \cite[Proposition 4.5]{o2008irreducible}, we give boundaries to $\Deg(Y)$. We show the proof, which has little differences with the one by O'Grady: the anti-symplectic involution $\iota$ of Theorem \ref{thm BCNS Aut(X)} and Theorem \ref{thm class F and comm diagram} are strongly used.
\begin{prop} \label{prop dim Y=3, deg Y}
	Keep notation as above and suppose that $\Dim(Y)=3$. Then $3 \le \Deg(Y)\le 6$. Moreover, if $\Deg(Y)=6$, then the base locus $\textnormal{Bs}|D|$ is $0$-dimensional.
\end{prop}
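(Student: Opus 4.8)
The plan is to adapt O'Grady's argument for \cite[Proposition 4.5]{o2008irreducible}, inserting at the crucial point the extra factor $2$ coming from the anti-symplectic involution $\iota$ and the factorisation of Theorem \ref{thm class F and comm diagram}. For the lower bound, first note that $Y=\overline{\textnormal{Im}(f)}$ is an irreducible reduced threefold in $\Pn^5$ which is \emph{nondegenerate}: if $Y$ were contained in a hyperplane of $\Pn^5\cong|D|^\vee$, that hyperplane would correspond to a divisor of $|D|$ containing all of $X$, which is absurd. Since a nondegenerate irreducible variety of dimension $k$ in $\Pn^n$ has degree at least $n-k+1$, this gives $\Deg(Y)\ge 5-3+1=3$.

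For the upper bound, the starting point is the Fujiki relation, which together with $c_X=1$ and $q_X(D)=2$ gives $\int_X D^4=12$. Pick general $D_1,D_2,D_3\in|D|$ (hence linearly independent) and write, using Corollary \ref{cor L cap Y0 reduced and irred 2}, $[D_1\cap D_2\cap D_3]=\Gamma+\Sigma$ with $\Gamma,\Sigma$ effective $1$-cycles, $\textnormal{Supp}(\Sigma)=B^1_{\textnormal{red}}$ and $\textnormal{Supp}(\Gamma)\cap B_{\textnormal{red}}$ of dimension at most $0$. Since $D_1\cap D_2\cap D_3$ is a proper intersection, $12=\int_X D^4=D\cdot[D_1\cap D_2\cap D_3]=D\cdot\Gamma+D\cdot\Sigma$, and as $D$ is ample both terms are non-negative. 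It therefore suffices to prove $D\cdot\Gamma\ge 2\Deg(Y)$: this yields $\Deg(Y)\le 6$ at once, and in case of equality it forces $D\cdot\Sigma=0$, hence $\Sigma=0$, hence $B^1_{\textnormal{red}}=\emptyset$, i.e. $\Bs|D|$ is $0$-dimensional.

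To prove $D\cdot\Gamma\ge 2\Deg(Y)$, let $L\cong\Pn^2$ be the general linear subspace cut out by the three hyperplanes corresponding to $D_1,D_2,D_3$. As $\Dim(Y)=3$, the set $L\cap Y$ consists of $\Deg(Y)$ points, all lying in $Y_0$, and over the dense open $X_0$ one has, scheme-theoretically, $D_1\cap D_2\cap D_3=f_0^{-1}(L\cap Y_0)=\bigsqcup_{y\in L\cap Y}f_0^{-1}(y)$; since $\Gamma$ and the fundamental cycle of $D_1\cap D_2\cap D_3$ agree over $X_0$, we get $\Gamma\ge\sum_{y\in L\cap Y}\big[\overline{f_0^{-1}(y)}\big]$ as effective cycles on $X$. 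It remains to show that, for $y$ general, $D\cdot\overline{f_0^{-1}(y)}$ is a positive \emph{even} integer, hence $\ge 2$. This is where Theorem \ref{thm class F and comm diagram} and Proposition \ref{prop decomposition H^0(X, D)} enter: $f$ factors through the quotient $X\to X/\langle\iota\rangle$, and on the blow-up $\textnormal{Bl}_F(X)$ the pullback $\beta^*\Ol_X(D)$ coincides with the pullback, along the degree-$2$ map $\textnormal{Bl}_F(X)\to W$ of diagram \ref{eq diagram Calabi-Yau}, of the line bundle $\mathcal{D}_W$. Consequently $D$ meets every complete fibre of $f$ in an even number, because the strict transform of such a fibre is $\iota$-invariant and maps $2\colon 1$ onto a curve in $W$ along which $\Ol_X(D)$ is a pullback; positivity follows from the ampleness of $D$ together with the fact that the general fibre is not contracted to a point. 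Summing over the $\Deg(Y)$ points of $L\cap Y$ gives $D\cdot\Gamma\ge 2\Deg(Y)$.

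The main obstacle is this last step. The remaining ingredients — the nondegeneracy of $Y$, the value $\int_X D^4=12$, and the decomposition $[D_1\cap D_2\cap D_3]=\Gamma+\Sigma$ — are either immediate or directly supplied by Corollary \ref{cor L cap Y0 reduced and irred}. What requires care is the cycle-theoretic bookkeeping identifying the moving part $\Gamma$ with the closures of the fibres of $f_0$ over $L\cap Y_0$ (keeping track of multiplicities, of possibly reducible or non-reduced general fibres, and of components that could drift into the base locus), and extracting the evenness of $D\cdot\overline{f_0^{-1}(y)}$ cleanly from the factorisation through the double cover; once this is in place the numerical inequalities are purely formal.
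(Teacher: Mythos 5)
Your proof is correct, and its skeleton is the same as the paper's (both adapt O'Grady's Proposition 4.5): nondegeneracy of $Y\subset\Pn^5$ gives $\Deg(Y)\ge 3$; the Fujiki relation gives $\int_X D^4=12$; Corollary \ref{cor L cap Y0 reduced and irred 2} gives the decomposition $[D_1\cap D_2\cap D_3]=\Gamma+\Sigma$; and the heart of the matter is that each fibre of $f$ over a point of $L\cap Y_0$ contributes at least $2$ to the pairing with $D$, because Theorem \ref{thm class F and comm diagram} makes every such fibre $\iota$-invariant. Where you genuinely diverge is in how that bound is certified. The paper argues cohomologically: by Theorem \ref{thm H_2(S2, Z)} and Theorem \ref{thm BCNS Aut(X)} the only $\iota$-invariant classes in $H^{3,3}(X,\Z)$ are multiples of $D^\vee$, and Proposition \ref{Prop Hassett Tschinkel} together with $\textnormal{div}(D)=1$ gives $\langle D, D^\vee\rangle=2$; hence the class of each fibre closure is either a positive multiple of $D^\vee$ or of the form $\Gamma_i'+\iota^*\Gamma_i'$, and in both cases pairs with $D$ to at least $2$. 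You instead argue geometrically, via Proposition \ref{prop decomposition H^0(X, D)} ($\beta^*\mathcal{D}\cong\pi^*\mathcal{D}_W$) and the projection formula along the degree-$2$ map $\textnormal{Bl}_F(X)\to W$, so that $D$ has even positive degree on each fibre closure. Your route buys independence from Theorem \ref{thm H_2(S2, Z)} and the Hassett--Tschinkel computation (which the paper develops essentially for this proposition), at the price of the bookkeeping you rightly flag: a component of a fibre closure that is individually $\iota$-invariant maps $2\colon 1$ onto its image in $W$ only if $\iota$ acts nontrivially on it, i.e.\ the component is not contained in $F$ --- true for general $y$ since $\dim f(F)\le 2<\dim Y=3$, so the general fibre misses $F$ altogether --- while components swapped in pairs map $1\colon 1$ individually, so the evenness is cleanest if stated as: $\pi_*$ of the strict transform of the whole ($\iota$-invariant) fibre cycle is divisible by $2$, whence $D\cdot\overline{f_0^{-1}(y)}=\langle\mathcal{D}_W,\pi_*(\cdot)\rangle$ is even, and positive by ampleness. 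With these points spelled out your argument is complete, and the endgame ($\Deg(Y)=6$ forces $\langle D,\Sigma\rangle=0$, hence $\Sigma=0$ by ampleness and $\Bs|D|$ is $0$-dimensional) coincides with the paper's.
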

\begin{proof}
	Let $d_Y:=\Deg(Y)$. Since $Y \subset \Pn^5$ is a non-degenerate subvariety, by \cite[Proposition 0]{eisenbud20163264} we have $d_Y \ge 3$. Let now $L_1, L_2, L_3 \subset \Pn^5$ be three generic hyperplanes. Then the intersection $Y \cap L_1 \cap L_2 \cap L_3$ is transverse and it is given by $d_Y$ points, which we call $p_1, \dots , p_{d_Y}$. Let $\Gamma_{0, i}:=f_0^{-1}(p_i)$ for $i=1, \dots , d_Y$, and let $\Gamma_i$ be the closure of $\Gamma_{0, i}$ in $X$. Let $D_1, D_2, D_3 \in |D|$ be the divisors which correspond to $L_1, L_2, L_3$ in the isomorphism $|D|^{\vee} \cong \Pn^5$. Since $L_1, L_2, L_3$ are generic, $D_1, D_2, D_3$ are linearly independent, and by Corollary $\ref{cor L cap Y0 reduced and irred 2}$ we have
	\begin{equation*} 
		[D_1 \cap D_2 \cap D_3] = \displaystyle\sum_{i=1}^{d_Y}\Gamma_i + \Sigma,
	\end{equation*}
	where the equality is in $H^{3,3}(X, \Z)$ and we write by abuse of notation $\Gamma_i$ and $\Sigma$ for their classes in $H^{3,3}(X, \Z)$. We write again $D$ also for its class in $\Pic(X)$: recall that $D=bh-a\delta$ with $(a, b)$ minimal solution of the Pell-type equation $P_t(-1)$, and $b$ is odd by Proposition \ref{prop D=bh-adelta then b odd}. Using Proposition \ref{Prop Hassett Tschinkel} one obtains $D^{\vee}=bh^{\vee}-2a\delta^{\vee}$ with $h^{\vee}=h$ and $\delta^{\vee}=\frac{\delta}{2}$ in $H^2(X, \Q)$. Moreover, as already remarked, the divisibility of $D$ in $H^2(X, \Z)$ is $\text{div}(D)=1$, hence by Proposition\,\,\ref{Prop Hassett Tschinkel} we have
	\begin{equation*}
		D \cdot D^{\vee} =\left( bh-a\delta, bh-2\frac{a}{2}\delta \right) =2.
	\end{equation*}  
	Since $D_1, D_2, D_3 \in |D|$ and $\iota^*D \cong D$, we have $\iota^*[D_1\cap D_2\cap D_3]=[D_1\cap D_2 \cap D_3]$, hence
	\begin{equation*}
		\displaystyle\sum_{i=1}^{d_Y}\iota^*\Gamma_i+\iota^*\Sigma=\displaystyle\sum_{i=1}^{d_Y}\Gamma_i+\Sigma.
	\end{equation*}
	By Theorem \ref{thm BCNS Aut(X)} and Theorem $\ref{thm H_2(S2, Z)}$ the only class fixed by the action induced by $\iota$ on $H^{3,3}(X, \Z)$ is $D^{\vee}$. Moreover, from Theorem $\ref{thm class F and comm diagram}$ we know that $f$ factors through the quotient by $\iota$, so $\iota^*\Gamma_i \cong \Gamma_i$, since $\Gamma_i=\overline{f_0^{-1}(p_i)}$. Then the class of $\Gamma_i$ is either some positive multiple of $D^{\vee}$, or it is of the form $\Gamma_i=\Gamma_i^{\prime}+\iota^*\Gamma_i^{\prime}$, where $\Gamma_i^{\prime}$ is an effective class. Since $q_X(D)=2$, which gives $\int_X c_1(\Ol_X(D))^4=12$ by Theorem \ref{thm BBF form}, we have
	\begin{equation*}
		12=\langle D, \displaystyle\sum_{i=1}^{d_Y}\Gamma_i+\Sigma \rangle.
	\end{equation*}
	Note that $\langle D, \Gamma_i \rangle$ is either $\langle D, \alpha D^{\vee} \rangle =2\alpha$, where $\alpha \in \Z_{\ge 1}$, or $\langle D, \Gamma_i^{\prime}+\iota^*\Gamma_i^{\prime} \rangle \ge 2$, being $D$ ample. Hence we have $12 \ge \langle D, \Sigma \rangle +2d_Y$, i.e., $d_Y \le 6$. Moreover, if $d_Y=6$, then $\Sigma=0$, otherwise $\langle D, \Sigma \rangle > 0$ by the Nakai--Moishezon criterion, thus the base locus $\textnormal{Bs}|D|$ is $0$-dimensional, since the $1$-dimensional component of $\textnormal{Bs}|D|$ is contained in $\Sigma$ by Corollary $\ref{cor L cap Y0 reduced and irred 2}$.
\end{proof}
Using exactly the same techniques in \cite{o2008irreducible}, with important simplifications due to Theorem \ref{thm class F and comm diagram}, we obtain the following proposition.
\begin{prop} \label{prop dim Y=4 etc}
	Let $X$ and $D$ be as in Theorem $\ref{thm irr property t neq 2}$. Consider the map $f: X \dashrightarrow Y \subset \Pn^5$ induced by the complete linear system $|D|$. Then $\Dim(Y)=4$. Moreover, $|D|$ is basepoint free, i.e., $f$ is a morphism, and one of the following holds.
	\begin{enumerate}[label=(\roman*)]
		\item $\Deg(f)=2$ and $\Deg(Y)=6$.
		\item $\Deg(f)=4$ and $\Deg(Y)=3$.
	\end{enumerate}
\end{prop}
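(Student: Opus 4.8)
The plan is to follow the analysis of O'Grady in \cite[\S 4]{o2008irreducible}, the decisive simplification being Theorem \ref{thm class F and comm diagram}: since $f=\varphi_{|D|}$ factors as $X\to X/\langle\iota\rangle\dashrightarrow Y$, once $\Dim(Y)=4$ its degree is divisible by $\Deg(X\to X/\langle\iota\rangle)=2$, so $\Deg(f)$ is even; this is what excludes the birational-onto-image case occurring in O'Grady's general classification. Recall from the preceding proposition that $\Dim(Y)\in\{3,4\}$.

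\emph{Step 1: the degree identity.} I would resolve the indeterminacy of $f$ by the blow-up $\beta_B\colon\tilde X\to X$ of $B=\Bs|D|$, which has dimension at most $1$ by Corollary \ref{cor L cap Y0 reduced and irred 2}; writing $\tilde f\colon\tilde X\to\Pn^5$ for the induced morphism one has $\beta_B^*\Ol_X(D)\cong\tilde f^*\Ol_{\Pn^5}(1)\otimes\Ol_{\tilde X}(E)$ with $E\ge 0$ supported on the (necessarily $\beta_B$-exceptional) locus over $B$. Since $q_X(D)=2$ and the Fujiki constant of $X$ equals $1$, Theorem \ref{thm BBF form} gives $\int_X c_1(\Ol_X(D))^4=12$, hence
\begin{equation*}
	12=\int_{\tilde X}\bigl(\tilde f^*H+E\bigr)^4,\qquad H:=c_1(\Ol_{\Pn^5}(1)).
\end{equation*}
If $\Dim(Y)<4$ then $(\tilde f^*H)^4=0$, whereas if $\Dim(Y)=4$ then $(\tilde f^*H)^4=\Deg(f)\cdot\Deg(Y)$. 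As in \cite[\S 4]{o2008irreducible}, the remaining terms in the expansion are non-negative — one uses that $\tilde f^*H$ is base-point free, that $E$ is effective and exceptional over a locus of dimension at most $1$, and Corollary \ref{cor L cap Y0 reduced and irred} — so $\Deg(f)\cdot\Deg(Y)\le 12$, with equality if and only if $E=0$, that is, if and only if $|D|$ is basepoint free.

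\emph{Step 2: exclusion of $\Dim(Y)=3$.} Suppose $\Dim(Y)=3$; by Proposition \ref{prop dim Y=3, deg Y} we have $3\le\Deg(Y)\le 6$, so Step 1 alone is not yet contradictory. Here I would run O'Grady's argument, simplified by Theorem \ref{thm class F and comm diagram}: a general fibre $\Gamma$ of $f$ is a curve, and since $f\circ\iota=f$ it is $\iota$-invariant, so by the description of $H^{3,3}(X,\Z)$ in Theorem \ref{thm H_2(S2, Z)} and the computation of the induced $\iota^*$-action its class lies in the rank-one invariant sublattice $H^{3,3}(X,\Z)^{\iota^*}=\Z D^\vee$, with $\langle D,D^\vee\rangle=2$; inserting this into the decomposition $[D_1\cap D_2\cap D_3]=\sum_i\Gamma_i+\Sigma$ of Corollary \ref{cor L cap Y0 reduced and irred 2} and comparing $\tilde f$ with the linear system $|D|$ as in \cite[\S 4]{o2008irreducible} yields a contradiction. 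This is the step I expect to be the main obstacle: the purely numerical bounds (Proposition \ref{prop dim Y=3, deg Y} and Step 1) are compatible with $\Dim(Y)=3$, and ruling it out needs the geometry of the general fibre and its behaviour under $\tilde f$; the gain from the involution is precisely that the fibres are automatically $\iota$-invariant, which shortens O'Grady's case analysis considerably. Once this is done, $\Dim(Y)=4$.

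\emph{Step 3: the numerical possibilities.} With $\Dim(Y)=4$ we have $(\tilde f^*H)^4\neq 0$, so by Step 1 $\Deg(f)\cdot\Deg(Y)=12$ and $|D|$ is basepoint free, hence $f$ is a morphism. Since $Y\subset\Pn^5$ is non-degenerate of dimension $4$, $\Deg(Y)\ge 2$ by \cite[Proposition 0]{eisenbud20163264}; together with $\Deg(f)$ even this leaves $(\Deg(f),\Deg(Y))\in\{(2,6),(4,3),(6,2)\}$. It remains to discard $(6,2)$: if $\Deg(Y)=2$ then $Y$ is a quadric hypersurface, so the equation of $Y$ pulls back to a non-zero element in the kernel of $\Sym^2 H^0(X,\Ol_X(D))\to H^0(X,\Ol_X(2D))$; but $\dim\Sym^2 H^0(X,\Ol_X(D))=21$ and, by Kodaira vanishing and the Riemann--Roch formula \cite[Formula (2.2.7)]{o2010higher}, $h^0(X,\Ol_X(2D))=21$ as well, so this map would fail to be surjective, which contradicts the fact, established as in \cite[\S 4]{o2008irreducible}, that $Y$ is not contained in a quadric. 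Hence $(\Deg(f),\Deg(Y))\in\{(2,6),(4,3)\}$, which is the assertion of the proposition.
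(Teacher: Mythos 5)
Your overall architecture matches the paper's: both defer the bulk of the work to O'Grady's \S 4--\S 5 and use Theorem \ref{thm class F and comm diagram} (the factorization through $X/\langle\iota\rangle$, hence $\Deg(f)$ even) to kill the odd-degree cases, and your Step 1 is O'Grady's Proposition 4.6/Corollary 4.7 as in the paper. Your Step 2 is only a sketch, but the paper itself just cites \cite[\S 5]{o2008irreducible} for $\Dim(Y)=4$, so that is comparable.

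The genuine gap is in Step 3, the exclusion of $(\Deg(f),\Deg(Y))=(6,2)$. Your argument is circular: from $Y$ being a quadric you deduce that $\Sym^2H^0(X,\Ol_X(D))\to H^0(X,\Ol_X(2D))$ is not injective, hence (since both sides have dimension $21$) not surjective, and you then invoke ``the fact, established as in \cite[\S 4]{o2008irreducible}, that $Y$ is not contained in a quadric''. No such fact is established there: $\Deg(Y)=2$ is precisely one of the cases left open by O'Grady's \S 4 analysis (it is case (1) in the list the paper extracts from it), and it is ruled out by O'Grady only in \S 5.2, by a geometric argument. Moreover no numerical argument of the kind you propose can possibly work, because the numerology is identical for $t=2$ ($h^0(D)=6$, $h^0(2D)=21$), and there the case $(6,2)$ actually occurs: $Y=\mathbb{G}(1,\Pn^3)$ is the Pl\"ucker quadric and the multiplication map does have a kernel. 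The exclusion must therefore use $t\neq2$, and the paper does so through Corollary \ref{cor L cap Y0 reduced and irred 1} (a consequence of the irreducibility property, Theorem \ref{thm irr property t neq 2}): a quadric fourfold in $\Pn^5$ contains planes, so one can choose a $3$-dimensional linear subspace $L\subset\Pn^5$ (e.g.\ one containing a plane of $Y$) for which $L\cap Y_0$ is reducible or non-reduced, contradicting the statement that codimension-$2$ linear sections of $Y_0$ are reduced and irreducible. You should replace your dimension count by this argument; as written, your proof never uses the hypothesis $t\neq2$ at the step where it is indispensable.
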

\begin{proof}
	After having obtained Proposition \ref{prop dim Y=3, deg Y}, following \cite[$\S 5$]{o2008irreducible} one shows that $\Dim(Y)=4$. Then, one obtains results which correspond to \cite[Proposition 4.6, Corollary 4.7]{o2008irreducible}, and proceeding as in \cite[$\S 4$]{o2008irreducible} we have the following possibilities:
	\begin{enumerate}
		\item $\Deg(Y)=2$.
		\item $\Deg(Y)=3$, $\Deg(f)=3$, $\textnormal{Bs}|D| \neq \emptyset$.
		\item $\Deg(Y)=3$, $\Deg(f)=4$, $\textnormal{Bs}|D| = \emptyset$.
		\item $\Deg(Y)=4$, $\Deg(f)=3$, $\textnormal{Bs}|D| = \emptyset$.
		\item $\Deg(Y)=6$, $\Deg(f)=2$, $\textnormal{Bs}|D| = \emptyset$.
		\item $\Deg(f)=1$.
	\end{enumerate}
As already remarked, $\Deg(f)$ is even by Theorem \ref{thm class F and comm diagram}, hence (2), (4) and (6) are not possible. Case (1) does not hold, otherwise one can find a linear subset $L \subset \Pn^5$ of dimension $3$ such that $L \cap Y_0$ is reducible, which contradicts Corollary\,\,$\ref{cor L cap Y0 reduced and irred 1}$, see \cite[$\S 5.2$]{o2008irreducible} for details.
\end{proof}
Note that in \cite{o2008irreducible} much longer discussions are needed to show that (2) and (4) in the proof of Proposition\,\,\ref{prop dim Y=4 etc} never hold: the existence of the anti-symplectic involution and Theorem\,\,\ref{thm class F and comm diagram} simplifies the situation a lot. Moreover, recall that Corollary \ref{cor L cap Y0 reduced and irred 1} is a consequence of the irreducibility property of Theorem \ref{thm irr property t neq 2}. When $t=2$, the irreducibility property does not hold, see Theorem \ref{thm irr property t=2}, hence Corollary \ref{cor L cap Y0 reduced and irred 1} is not true and case (1) in the proof of Proposition\,\,\ref{prop dim Y=4 etc} is possible, actually it holds: as we have already seen in Section \ref{Section irreducibility property}, in this case $\Deg(f)=6$ and $\Deg(Y)=2$, in particular\,\,$Y$ is the Grassmannian $\mathbb{G}(1, \Pn^3)$ of lines in $\Pn^3$.

The final step is to show that Item $(ii)$ of Proposition\,\,\ref{prop dim Y=4 etc} is impossible and in Item $(i)$ the variety $Y \subset \Pn^5$ is an EPW sextic,\,\,$X$ is a double EPW sextic and $f$ is the double cover associated. See \cite{eisenbud2001lagrangian} for the definition of EPW sextic and \cite{o2006irreducible} for details on double EPW sextics. We remark that we cannot proceed following \cite[$\S 5.5$]{o2008irreducible}: in that case, the fact that the variety is a \emph{deformation} of the Hilbert square of a K3 surface plays a central role in the proof. If\,\,$M$ is an IHS manifold of $K3^{[2]}$-type and $h \in H^{1,1}(M, \Q)$ is an element such that $q_M(h) \neq 0$, O'Grady shows that $H^4(M, \C)$ can be decomposed as
\begin{equation*}
	H^4(M, \C)=\left( \C h^2 \oplus \C q_M^{\vee} \right) \oplus \left( \C h \otimes h^{\perp} \right) \oplus W(h),
\end{equation*}
where the orthogonality is with respect to the BBF form and $W(h):=(q_M^{\vee})^{\perp} \cap \text{Sym}^2(h^{\perp})$, where $(q_M^{\vee})^{\perp}$ is the orthogonal with respect to the intersection product. Then O'Grady takes an IHS manifold $X$ deformation equivalent to $M$ such that properties (1)-(6) of \cite[Proposition 3.2]{o2008irreducible} hold. In particular property (4) says that if $V \subset H^4(X)$ is a rational sub Hodge structure, then $V_{\C}=V_1 \oplus V_2 \oplus V_3$, where $V_1 \subset (\C h^2 \oplus \C q_X^{\vee})$, $V_2$ is either $0$ or equal to $\C h \otimes h^{\perp}$ and $V_3$ is either $0$ or equal to $W(h)$. In our case, there is no reason why a similar result holds, without deforming the variety $X=S^{[2]}_{2t}$ that we are studying: instead we exploit once again the anti-symplectic involution $\iota$ and Theorem\,\,\ref{thm class F and comm diagram}. We can finally prove the main theorem of this paper.
\begin{thm} \label{thm main}
	Let $X$ be the Hilbert square of a generic K3 surface $S_{2t}$ of degree $2t$ such that $X$ admits an ample divisor $D$ with $q_X(D)=2$. Suppose that $t \neq 2$, and denote by $\iota$ the anti-symplectic involution which generates $\Aut(X)$. Then the complete linear system $|D|$ is basepoint free, the morphism
	\begin{equation*}
		\varphi_{|D|}: X \rightarrow Y \subset \Pn^5
	\end{equation*}
	is a double cover whose ramification locus is the surface $F$ of points fixed by $\iota$, and $Y \cong X/\langle \iota \rangle$ is an EPW sextic, so\,\,$X$ is a double EPW sextic.
\end{thm}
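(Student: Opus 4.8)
The plan is to pick up from Proposition \ref{prop dim Y=4 etc}, which already gives that $|D|$ is basepoint free, that $f=\varphi_{|D|}\colon X\to Y\subset\Pn^5$ is a finite morphism with $\Dim(Y)=4$, and that exactly one of $(\Deg f,\Deg Y)=(2,6)$ or $(4,3)$ holds. So two things remain: (A) excluding $(4,3)$, and (B) in the case $(2,6)$, identifying $Y$ with $X/\langle\iota\rangle$ and recognising it as an EPW sextic. The backbone of both parts is the factorisation $f=\bar f\circ\pi$ provided by Theorem \ref{thm class F and comm diagram}, where $\pi\colon X\to X/\langle\iota\rangle$ is the quotient morphism (of degree $2$) and $\bar f\colon X/\langle\iota\rangle\to Y$ is finite of degree $\Deg(f)/2$. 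I will use freely that $X/\langle\iota\rangle$ is normal, being the quotient of a smooth variety by a finite group, with $\textnormal{Sing}(X/\langle\iota\rangle)=\pi(F)$ by \cite{camere2019calabi}, $F=\textnormal{Fix}(\iota)$ being smooth and connected by Proposition \ref{prop F connected}.

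For part (A), suppose $\Deg(f)=4$, so $\bar f$ is finite of degree $2$. Since its base $X/\langle\iota\rangle$ is normal, $\bar f$ is a Galois double cover and carries a non-trivial covering involution $\tau\in\Aut(X/\langle\iota\rangle)$. Being an automorphism, $\tau$ preserves $\textnormal{Sing}(X/\langle\iota\rangle)=\pi(F)$, hence acts on the smooth locus $U:=(X/\langle\iota\rangle)\setminus\pi(F)$, over which $\pi$ restricts to the étale double cover $X\setminus F\to U$. Since $F$ has complex codimension $2$ in the simply connected manifold $X$ and $\iota$ acts freely away from $F$, the space $X\setminus F$ is simply connected and $\pi_1(U)\cong\Z/2\Z$, so $U$ admits a unique connected double cover; therefore $\tau|_U$ lifts to an automorphism $\tilde\tau$ of $X\setminus F$ commuting with $\iota$, and $\tilde\tau$ extends to an automorphism of $X$ because $F$ has codimension $\ge 2$ in the smooth $X$. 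As $\tilde\tau$ induces $\tau\neq\id$ on $X/\langle\iota\rangle$, it does not lie in $\langle\iota\rangle$, so $\langle\iota,\tilde\tau\rangle$ has order $4$ inside $\Aut(X)$, contradicting $\Aut(X)=\langle\iota\rangle\cong\Z/2\Z$ from Theorem \ref{thm BCNS Aut(X)}. Hence $\Deg(f)=2$ and $\Deg(Y)=6$. I expect this to be the crux of the whole proof, and it is precisely where the involution $\iota$ and Theorem \ref{thm class F and comm diagram} are indispensable: they replace O'Grady's long Hodge-theoretic and deformation-theoretic analysis of \cite[\S 5]{o2008irreducible} by the elementary remark that a degree-$2$ factor of $f$ would produce an automorphism of $X$ beyond $\iota$.

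For part (B), with $\Deg(f)=2$ the morphism $\bar f\colon X/\langle\iota\rangle\to Y$ is finite and birational. I would first argue that $Y$ is normal: $f$ is finite of degree $2$ with $X$ smooth, so $Y$ is smooth wherever $f$ is étale, and its singular locus is contained in $\{y:\#f^{-1}(y)<2\}$, which is in turn contained in $\bar f(\pi(F))$ — indeed, if $\bar f^{-1}(y)$ had at least two points, then so would $f^{-1}(y)=\pi^{-1}(\bar f^{-1}(y))$ — a set of dimension at most $2$; and a sextic hypersurface in $\Pn^5$ (which $Y$ is, being a non-degenerate fourfold of degree $6$ there) whose singular locus has dimension $\le 2$ is normal. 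A finite birational morphism onto a normal variety is an isomorphism, so $Y\cong X/\langle\iota\rangle$, and under this identification $f=\varphi_{|D|}$ is the quotient morphism $\pi$: a double cover of $Y$ whose ramification is $F=\textnormal{Fix}(\iota)$ and whose branch locus is $\pi(F)=\textnormal{Sing}(Y)$. It then remains to recognise $Y$ as an EPW sextic: $Y\subset\Pn^5$ is a normal sextic fourfold obtained as the quotient of an IHS fourfold of $K3^{[2]}$-type by an anti-symplectic involution, branched along its singular locus, with $f^*\Ol_Y(1)=\Ol_X(D)$, $q_X(D)=2$ and $\textnormal{div}(D)=1$ (Proposition \ref{prop F connected}); by the structure theory of (double) EPW sextics \cite{eisenbud2001lagrangian,o2006irreducible} together with O'Grady's classification \cite{o2008irreducible}, such a $Y$ is an EPW sextic, and hence $X$ is the associated double EPW sextic, which is the assertion. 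This last identification is the one step that is not self-contained, as it genuinely imports the classification of EPW sextics, and is the second place where care is needed.
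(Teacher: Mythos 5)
Your overall architecture for excluding $(\Deg f,\Deg Y)=(4,3)$ is the same as the paper's: produce from the degree-$2$ factor $\bar f$ a covering involution $\tau$ of $X/\langle\iota\rangle$, lift it through the universal cover $X\setminus F\to (X/\langle\iota\rangle)\setminus\pi(F)$, and contradict the known automorphism group. (Your remark that only normality of the \emph{source} $X/\langle\iota\rangle$ is needed to get $\tau$ is a genuine small shortcut: the paper first proves the cubic $Y$ normal, via the no-planes consequence of Corollary \ref{cor L cap Y0 reduced and irred 1}, in order to quote \cite[Remark 2.4]{debarre2019double}.) However, there is a gap at the extension step: you assert that the lift $\tilde\tau$ of $\tau$, defined on $X\setminus F$, extends to an automorphism of $X$ ``because $F$ has codimension $\ge 2$ in the smooth $X$''. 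That principle is false in general: an automorphism of the complement of a codimension-$2$ subset of a smooth projective variety only gives a birational self-map (Mukai flops are the standard example), and biregularity does not follow from Hartogs-type extension. The paper at this point only concludes $\tilde\tau\in\Bir(X)$ and then uses $\Bir(X)\cong\Aut(X)\cong\langle\iota\rangle$ by \cite[Proposition B.3]{debarre2019period} (which here reflects the coincidence of the nef and movable cones in Theorem \ref{thm Bayer Macri}); alternatively, you could observe that $\tilde\tau$ lies over $\id_Y$, hence fixes the ample class $D=f^*\Ol_Y(1)$, and a birational self-map of a projective manifold that is biregular in codimension one and preserves an ample class is biregular. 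Either repair is easy, but as written the step is unjustified, and Theorem \ref{thm BCNS Aut(X)} alone (which computes $\Aut(X)$, not $\Bir(X)$) does not close it.

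In part (B) your normality argument for the sextic is also incomplete: you deduce $\textnormal{Sing}(Y)\subseteq\{y:\#f^{-1}(y)<2\}$ from ``$Y$ is smooth wherever $f$ is \'etale'', but you never show that $f$ is \'etale (i.e.\ flat and unramified) at points where the fiber has two points. For a finite, a priori non-flat morphism onto a possibly non-normal target, full fiber cardinality does not imply \'etaleness: the normalization of a variety with cusp-like non-normal locus is finite, birational, even bijective, yet not an isomorphism, so a $3$-dimensional non-normal locus of $Y$ away from $f(F)$ is not excluded by this reasoning. The paper sidesteps all of this: once $(\Deg f,\Deg Y)=(2,6)$ is established, it invokes \cite[Theorem 1.1]{o2006irreducible}, which directly yields that $Y$ is an EPW sextic and that $f$ is the associated double cover ramified along $F$, whence also $Y\cong X/\langle\iota\rangle$. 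Your closing appeal to ``structure theory plus O'Grady's classification'' imports the same input in a vaguer form, so the cleanest fix is to cite that theorem precisely instead of re-deriving $Y\cong X/\langle\iota\rangle$ by hand.
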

\begin{proof}
	We show that $\Deg(f)=4$ and $\Deg(Y)=3$ in Proposition \ref{prop dim Y=4 etc} never holds. Suppose by contradiction that $\Deg(f)=4$ and $\Deg(Y)=3$. First we show that $Y \subset \Pn^5$ is a normal variety. Since $Y \subset \Pn^5$ is a hypersurface, by \cite[Proposition II.8.23]{hartshorne2013algebraic} we have that $Y$ is normal if and only if $\text{cod}_Y(\text{Sing}(Y)) \ge 2$. Suppose now that $\Dim(\text{Sing}(Y))=3$. Note that $Y$ does not contain planes, otherwise we get a contradiction with Corollary\,\,$\ref{cor L cap Y0 reduced and irred 1}$. As observed by O'Grady in \cite[Claim 5.10]{o2008irreducible}, since in $\Pn^5$ hyperplanes and quadrics contain planes, a cubic hypersurface of $\Pn^5$ which is either non-reduced or reducible contains planes. Hence in our case $Y$ is reduced and irreducible. Then, as in \cite[Lemma 5.17]{o2008irreducible}, the intersection between the variety $Y$ and a generic plane of $\Pn^5$ is a singular cubic curve which is reduced and irreducible. In particular this has only one singular point, so $\text{Sing}(Y)$ has exactly one irreducible component $\Sigma \cong \Pn^3$ of degree $1$. Thus $Y \supset \Sigma$, and it contains planes. This contradicts Corollary $\ref{cor L cap Y0 reduced and irred 1}$. We conclude that $\text{cod}_Y(\text{Sing}(Y)) \ge 2$, hence $Y$ is normal. The commutative diagram (\ref{eq comm diagram}) gives the following:
	\begin{equation*}
		\begin{tikzcd}
			X \arrow[rr, "f"]  \arrow[dr, "\pi^{\prime}"] & & Y \subset \Pn^5\, \\ 
			& X/\langle \iota \rangle. \arrow[ur, "\bar{f}"] &
		\end{tikzcd}
	\end{equation*}
	Since $\Deg(f)=4$ by assumption, by the commutativity of the diagram above we have $\Deg(\bar{f})=2$. Let $\tilde{Y}:=X/\langle \iota \rangle$. Since $X$ is smooth and $\Aut(X) \cong \langle \iota \rangle$ is a finite group, the quotient $\tilde{Y}$ is a normal variety. Both $\tilde{Y}$ and $Y$ are normal varieties, hence by \cite[Remark 2.4]{debarre2019double} there is a non trivial involution $\tau: \tilde{Y} \rightarrow \tilde{Y}$ such that $Y \cong \tilde{Y}/\langle \tau \rangle$. We show that $\tau$ lifts to an automorphism on $X$. We use a technique from \cite[p.179]{o2013double} and \cite[Proposition B.8]{debarre2018gushel}. Let $F=\textnormal{Fix}(\iota)$ be the fixed locus of the anti-symplectic involution $\iota$. Since $\text{Sing}(\tilde{Y})=\pi^{\prime}(F)$ and $\tau$ is an automorphism on $\tilde{Y}$, we have $\tau(\text{Sing}(\tilde{Y}))=\text{Sing}(\tilde{Y})$. Let $Y^{\prime}:=\tilde{Y}\setminus \text{Sing}(\tilde{Y})$. Then the restriction of $\tau$ to $Y^{\prime}$ gives an involution $\tau|_{Y^{\prime}}: Y^{\prime} \rightarrow Y^{\prime}$ of $Y^{\prime}$. We set $\tau^{\prime}:=\tau|_{Y^{\prime}}$. Since $\text{cod}_X(F)=2$ and $X$ is simply connected, we have that $X^{\prime}:=X\setminus F$ is simply connected. Thus if we restrict $\pi^{\prime}$ to $X^{\prime}$ we obtain the following universal cover:
	\begin{equation*}
		\pi^{\prime}|_{X^{\prime}}: X^{\prime} \rightarrow Y^{\prime}.
	\end{equation*}
	By the lifting criterion $\tau^{\prime}$ lifts to an automorphism on $X^{\prime}$. Thus we obtain a birational map $\tilde{\tau}: X \dashrightarrow X$ which is not defined a priori on the fixed locus $F$. By \cite[Proposition B.3]{debarre2019period} we have $\text{Bir}(X) \cong \Aut(X) \cong \langle \iota \rangle$, so $\tilde{\tau}$ is either the identity or $\iota$. This implies that the involution $\tau: \tilde{Y} \rightarrow \tilde{Y}$ is the identity, which is a contradiction. We conclude that $\Deg(\tilde{f})$ cannot be $2$, hence we get $\Deg(f)=2$ and $\Deg(Y)=6$. It remains to show that $Y$ is an EPW sextic and $f$ is a double cover ramified over $F$, so that $X$ is a double EPW sextic. This is true by \cite[Theorem 1.1]{o2006irreducible}, and we are done.
\end{proof}
\newcommand{\etalchar}[1]{$^{#1}$}


\begin{thebibliography}{SDWSDJ{\etalchar{+}}20}	
\bibitem[ABT01]{arrondo2001focus}
E.~Arrondo, M.~Bertolini, and C.~Turrini.
\newblock A focus on focal surfaces.
\newblock {\em Asian Journal of Mathematics}, 5(3):535--560, 2001.
	
\bibitem[BC20]{beri2020birational}
Pietro Beri and Alberto Cattaneo.
\newblock On birational transformations of {H}ilbert schemes of points on {K}3
surfaces.
\newblock {\em arXiv preprint arXiv:2005.05187}, 2020.
	
\bibitem[BCMS19]{boissiere2019non}
Samuel Boissi{\`e}re, Andrea Cattaneo, Dmitrii~Genrikhovich Markushevich, and
Alessandra Sarti.
\newblock On the non-symplectic involutions of the {H}ilbert square of a {K}3
surface.
\newblock {\em Izvestiya: Mathematics}, 83(4):731, 2019.
	
\bibitem[BCNWS16]{boissiere2016automorphism}
Samuel Boissiere, Andrea Cattaneo, Marc Nieper-Wi{\ss}kirchen, and Alessandra
Sarti.
\newblock The automorphism group of the {H}ilbert scheme of two points on a	generic projective {K}3 surface.
\newblock In {\em K3 surfaces and their moduli}, pages 1--15. Springer, 2016.

\bibitem[Bea83a]{beauville1983some}
Arnaud Beauville.
\newblock Some remarks on {K}ahler manifolds with $c_1=0$.
\newblock {\em Classification of algebraic and analytic manifolds, Katata, 1982}, 1983.
	
\bibitem[Bea83b]{beauville1983varietes}
Arnaud Beauville.
\newblock Vari{\'e}t{\'e}s {K}{\"a}hleriennes dont la premiere classe de
{C}hern est nulle.
\newblock {\em Journal of Differential Geometry}, 18(4):755--782, 1983.

\bibitem[Bea11]{beauville2011antisymplectic}
Arnaud Beauville.
\newblock Antisymplectic involutions of holomorphic symplectic manifolds.
\newblock {\em Journal of Topology}, 4(2):300--304, 2011.

\bibitem[BHPVdV15]{barth2015compact}
Wolf Barth, Klaus Hulek, Chris Peters, and Antonius Van~de Ven.
\newblock {\em Compact complex surfaces}, volume~4.
\newblock Springer, 2015.

\bibitem[BM14]{bayer2014mmp}
Arend Bayer and Emanuele Macr{\`\i}.
\newblock M{MP} for moduli of sheaves on {K}3s via wall-crossing: nef and
movable cones, {L}agrangian fibrations.
\newblock {\em Inventiones mathematicae}, 198(3):505--590, 2014.

\bibitem[Bou04]{boucksom2004divisorial}
S{\'e}bastien Boucksom.
\newblock Divisorial {Z}ariski decompositions on compact complex manifolds.
\newblock In {\em Annales scientifiques de l'Ecole normale sup{\'e}rieure},
volume~37, pages 45--76, 2004.

\bibitem[Bre13]{bredon2013topology}
Glen~E Bredon.
\newblock {\em Topology and geometry}, volume 139.
\newblock Springer Science \& Business Media, 2013.

\bibitem[Cat19]{cattaneo2019automorphisms}
Alberto Cattaneo.
\newblock Automorphisms of {H}ilbert schemes of points on a generic projective
{K}3 surface.
\newblock {\em Mathematische Nachrichten}, 292(10):2137--2152, 2019.

\bibitem[CGM19]{camere2019calabi}
Chiara Camere, Alice Garbagnati, and Giovanni Mongardi.
\newblock Calabi--{Y}au quotients of hyperk{\"a}hler four-folds.
\newblock {\em Canadian Journal of Mathematics}, 71(1):45--92, 2019.

\bibitem[DHH{\etalchar{+}}15]{derenthal2015cox}
Ulrich Derenthal, J{\"u}rgen Hausen, Armand Heim, Simon Keicher, and Antonio
Laface.
\newblock Cox rings of cubic surfaces and {F}ano threefolds.
\newblock {\em Journal of Algebra}, 436:228--276, 2015.

\bibitem[DK18]{debarre2018gushel}
Olivier Debarre and Alexander Kuznetsov.
\newblock Gushel-{M}ukai varieties: {C}lassification and birationalities.
\newblock {\em Algebraic Geometry}, pages 15--76, 2018.


\bibitem[DK19]{debarre2019double}
Olivier Debarre and Alexander Kuznetsov.
\newblock Double covers of quadratic degeneracy and {L}agrangian intersection
loci.
\newblock {\em Mathematische Annalen}, pages 1--35, 2019.

\bibitem[DM19]{debarre2019period}
Olivier Debarre and Emanuele Macr{\`\i}.
\newblock On the period map for polarized hyperk{\"a}hler fourfolds.
\newblock {\em International Mathematics Research Notices},
2019(22):6887--6923, 2019.

\bibitem[EH16]{eisenbud20163264}
David Eisenbud and Joe Harris.
\newblock {\em 3264 and all that: {A} second course in algebraic geometry}.
\newblock Cambridge University Press, 2016.

\bibitem[EJ16]{elsenhans2016point}
Andreas-Stephan Elsenhans and J{\"o}rg Jahnel.
\newblock Point counting on k3 surfaces and an application concerning real and
complex multiplication.
\newblock {\em LMS Journal of Computation and Mathematics}, 19(A):12--28, 2016.

\bibitem[EPW01]{eisenbud2001lagrangian}
David Eisenbud, Sorin Popescu, and Charles Walter.
\newblock Lagrangian subbundles and codimension 3 subcanonical subschemes.
\newblock {\em Duke Mathematical Journal}, 107(3), 2001.

\bibitem[Fer12]{ferretti2012chow}
Andrea Ferretti.
\newblock The {C}how ring of double {EPW} sextics.
\newblock {\em Algebra \& Number Theory}, 6(3):539--560, 2012.

\bibitem[FMOS20]{flapan2020antisymplectic}
Laure Flapan, Emanuele Macr{\`\i}, Kieran O'Grady, and Giulia Sacc{\`a}.
\newblock Antisymplectic involutions on hyperk{\"a}hler manifolds.
\newblock In {\em 2020 Fall Eastern Virtual Sectional Meeting}. AMS, 2020.

\bibitem[Fuj87]{fujiki1987rham}
Akira Fujiki.
\newblock On the de {R}ham cohomology group of a compact {K}{\"a}hler
symplectic manifold.
\newblock In {\em Algebraic Geometry, Sendai, 1985}, pages 105--165.
Mathematical Society of Japan, 1987.

\bibitem[Ful13]{fulton2013intersection}
William Fulton.
\newblock {\em Intersection theory}, volume~2.
\newblock Springer Science \& Business Media, 2013.

\bibitem[GH78]{griffiths1978principles}
Phillip Griffiths and Joseph Harris.
\newblock {\em Principles of algebraic geometry}, volume~19.
\newblock Wiley Online Library, 1978.

\bibitem[GHJ12]{gross2012calabi}
Mark Gross, Daniel Huybrechts, and Dominic Joyce.
\newblock {\em Calabi--{Y}au manifolds and related geometries: lectures at a
	summer school in {N}ordfjordeid, {N}orway, {J}une 2001}.
\newblock Springer Science \& Business Media, 2012.
	
\bibitem[Gua01]{guan2001betti}
Daniel Guan.
\newblock On the {B}etti numbers of irreducible compact hyperk{\"a}hler
manifolds of complex dimension four.
\newblock {\em Mathematical Research Letters}, 8(5):663--669, 2001.	

\bibitem[Har13]{hartshorne2013algebraic}
Robin Hartshorne.
\newblock {\em Algebraic geometry}, volume~52.
\newblock Springer Science \& Business Media, 2013.

\bibitem[Hat05]{hatcher2005algebraic}
Allen Hatcher.
\newblock {\em Algebraic topology}.
\newblock Cambridge, 2005.

\bibitem[HT01]{hassett2001rational}
Brendan Hassett and Yuri Tschinkel.
\newblock Rational curves on holomorphic symplectic fourfolds.
\newblock {\em Geometric And Functional Analysis}, 11(6):1201--1228, 2001.

\bibitem[Huy16]{huybrechts2016lectures}
Daniel Huybrechts.
\newblock {\em Lectures on {K}3 surfaces}, volume 158.
\newblock Cambridge University Press, 2016.

\bibitem[Ive06]{iversen2006linear}
Birger Iversen.
\newblock {\em Linear determinants with applications to the {P}icard scheme of
	a family of algebraic curves}, volume 174.
\newblock Springer, 2006.

\bibitem[Mar07]{markman2007integral}
Eyal Markman.
\newblock Integral generators for the cohomology ring of moduli spaces of
sheaves over {P}oisson surfaces.
\newblock {\em Advances in Mathematics}, 208(2):622--646, 2007.

\bibitem[Mar11]{markman2011survey}
Eyal Markman.
\newblock A survey of {T}orelli and monodromy results for
holomorphic-symplectic varieties.
\newblock In {\em Complex and differential geometry}, pages 257--322. Springer,
2011.

\bibitem[Mar13]{markman2013prime}
Eyal Markman.
\newblock Prime exceptional divisors on holomorphic symplectic varieties and
monodromy reflections.
\newblock {\em Kyoto Journal of Mathematics}, 53(2):345--403, 2013.

\bibitem[MFK94]{mumford1994geometric}
David Mumford, John Fogarty, and Frances Kirwan.
\newblock {\em Geometric invariant theory}, volume~34.
\newblock Springer Science \& Business Media, 1994.

\bibitem[Nov21]{novario2021hodge}
Simone Novario.
\newblock Hodge classes of type (2, 2) on Hilbert squares of projective K3 surfaces.
\newblock {\em arXiv preprint arXiv:2112.11306}, 2021.

\bibitem[Nov]{novario2021ths}
Simone Novario.
\newblock {\em Linear systems on irreducible holomorphic symplectic manifolds}.
\newblock PhD thesis, Università degli studi di Milano, Université de
Poitiers, available at
\url{https://air.unimi.it/bitstream/2434/886303/2/phd_unimi_R12353.pdf},
2021.

\bibitem[O'G99]{o1999desingularized}
K.~G. O'Grady.
\newblock Desingularized moduli spaces of sheaves on a {K}3.
\newblock {\em Journal für die reine und angewandte Mathematik (Crelles
	Journal)}, 1999(512):49--117, 1999.

\bibitem[O'G03]{o2003new}
Kieran~G. O'Grady.
\newblock A new six-dimensional irreducible symplectic variety.
\newblock {\em Journal of Algebraic Geometry}, 12(3):435--505, 2003.

\bibitem[O'G06]{o2006irreducible}
Kieran~G. O'Grady.
\newblock Irreducible symplectic $4$-folds and {E}isenbud--{P}opescu--{W}alter
sextics.
\newblock {\em Duke Mathematical Journal}, 134(1):99--137, 2006.

\bibitem[O'G08]{o2008irreducible}
Kieran~G O'Grady.
\newblock Irreducible symplectic $4$-folds numerically equivalent to
$({K}3)^{[2]}$.
\newblock {\em Communications in Contemporary Mathematics}, 10(04):553--608,
2008.

\bibitem[O'G10]{o2010higher}
Kieran~G O'Grady.
\newblock Higher-dimensional analogues of {K}3 surfaces.
\newblock {\em Current developments in algebraic geometry}, 59:257--293, 2010.

\bibitem[O'G13]{o2013double}
Kieran O'Grady.
\newblock Double covers of {EPW}-sextics.
\newblock {\em Michigan Mathematical Journal}, 62(1), 2013.

\bibitem[Per13]{perron2013lehre}
Oskar Perron.
\newblock {\em Die {L}ehre von den {K}ettenbr{\"u}chen: {B} and {I}:
	{E}lementare {K}ettenbr{\"u}che}.
\newblock Springer-Verlag, 2013.

\bibitem[PS08]{peters2008mixed}
Chris~AM Peters and Joseph~HM Steenbrink.
\newblock {\em Mixed hodge structures}, volume~52.
\newblock Springer Science \& Business Media, 2008.

\bibitem[Rie18]{riess2018base}
Ulrike Rie{\ss}.
\newblock Base divisors of big and nef line bundles on irreducible symplectic
varieties.
\newblock {\em arXiv preprint arXiv:1807.05192}, 2018.

\bibitem[SD74]{saint1974projective}
Bernard Saint-Donat.
\newblock Projective models of {K}-3 surfaces.
\newblock {\em American Journal of Mathematics}, 96(4):602--639, 1974.

\bibitem[Ver96]{verbitsky1996cohomology}
Mikhail Verbitsky.
\newblock Cohomology of compact hyperk{\"a}hler manifolds and its applications.
\newblock {\em Geometric \& Functional Analysis GAFA}, 6(4):601--611, 1996.

\bibitem[vG08]{van2008real}
Bert van~Geemen.
\newblock Real multiplication on k3 surfaces and kuga satake varieties.
\newblock {\em Michigan Mathematical Journal}, 56(2):375--399, 2008.

\bibitem[vGS07]{van2007nikulin}
Bert van Geemen and Alessandra Sarti.
\newblock Nikulin involutions on {K}3 surfaces.
\newblock {\em Mathematische Zeitschrift}, 255(4):731--753, 2007.

\bibitem[Wel81]{welters1981abel}
Gerald~E Welters.
\newblock Abel--{J}acobi isogenies for certain types of {F}ano threefolds.
\newblock {\em MC Tracts}, 1981.

\bibitem[Yok94]{yokoi1994solvability}
Hideo Yokoi.
\newblock Solvability of the diophantine equation $x^2-dy^2=\pm 2$ and new
invariants for real quadratic fields.
\newblock {\em Nagoya Mathematical Journal}, 134:137--149, 1994.

\bibitem[Zar83]{Zarhin1983}
Yu.G. Zarhin.
\newblock Hodge groups of k3 surfaces.
\newblock {\em Journal für die reine und angewandte Mathematik}, 341:193--220,
1983.
\end{thebibliography}
\end{document}